\documentclass[suppldata]{interact}

\usepackage[T2A]{fontenc}
\usepackage[cp866]{inputenc}
\usepackage[english]{babel}
\usepackage{amsmath, amsthm, amssymb, amsfonts}
\usepackage[mathscr]{eucal}
\usepackage{hyperref}

\theoremstyle{plain}
\newtheorem{theorem}{Theorem}
\newtheorem{lemma}[theorem]{Lemma}

\newtheorem{corollary}[theorem]{Corollary}

\theoremstyle{definition}

\newtheorem{remark}[theorem]{Remark}

\DeclareMathOperator{\Rea}{Re}

\DeclareMathOperator*{\esssup}{ess\,sup}
\DeclareMathOperator*{\essinf}{ess\,inf}

\DeclareMathOperator{\Arg}{Arg}

\numberwithin{equation}{section}
\numberwithin{theorem}{section}

\begin{document}
\title{Local and global solutions 
to continuous fragmentation-coagulation equations 
with vanishing diffusion and unbounded 
fragmentation and coagulation rates}%

\author{
\name{S. Shindin}
\affil{School of Agriculture and Science, University of KwaZulu-Natal, Private Bag X54001, 
Durban 4000, South Africa.  E-mail: \href{mailto:shindins@ukzn.ac.za}{\nolinkurl{shindins@ukzn.ac.za}}}
}

\maketitle

\begin{abstract}
In the paper, we study spatially distributed particle systems whose time evolution is governed 
by vanishing diffusion in space $\mathbb{R}^d$, $d\ge 1$, and by size-continuous 
fragmentation and coagulation processes with unbounded rates.
We show that for a large class of coefficients, such systems are classically locally well-posed, 
provided the diffusion and the coagulation processes are suitably dominated by the fragmentation. 
In the special case of power rates, we demonstrate existence of global in time classical solutions 
in all spatial dimensions $d\ge 1$ and without any restrictions on the size of input data.
\end{abstract}

\begin{keywords}
fragmentation, coagulation, vanishing diffusion, global well-posedness\\
{\it 2020 MSC}: 35K58,  45K05, 47D03
\end{keywords}

\section{Introduction}\label{sec1}

In the paper, we study the following integro-differential equation
\begin{subequations}\label{eq1.1}
\begin{align}\label{eq1.1a}
u_t &= \mathcal{A} u+ \mathcal{B} u + \mathcal{C}(u,u),\quad u(0) = u_0,\quad t>0,
\end{align}
where $u:=u(x,\xi,t)$, $x\in\mathbb{R}^d$, $d\ge 1$, 
$\xi,t\in\mathbb{R}_+:=(0,+\infty)$ and 
\begin{align}
\label{eq1.1b}
[\mathcal{A}u](x,\xi) &:= \nabla^T \alpha(x,\xi) \nabla u(x,\xi),\\
\nonumber
[\mathcal{B}u](x,\xi) &:=  [\mathcal{B}^+u](x,\xi)- [\mathcal{B}^-u](x,\xi)\\
\label{eq1.1c}
&:=\int_\xi^\infty \gamma(\xi,\eta) \beta(x,\eta) u(x,\eta)d\eta - \beta(x,\xi) u(x,\xi),\\
\nonumber
\mathcal{C}(u,v)(x,\xi) 
&:= \mathcal{C}^+(u,v)(x,\xi) - \mathcal{C}^-(u,v)(x,\xi)\\
\nonumber
&:= \tfrac{1}{4} \int_0^\xi \varkappa(x,\eta,\xi-\eta) 
\bigl[u(x,\eta)v(x,\xi-\eta) + v(x,\eta)u(x,\xi-\eta)\bigr]d\eta \\
\label{eq1.1d}
& - \tfrac{1}{2}\int_{\mathbb{R}_+} \varkappa(x,\eta,\xi) [u(x,\xi)v(x,\eta) + v(x,\xi)u(x,\eta)]d\eta.
\end{align}
\end{subequations}
In a typical scenario, \eqref{eq1.1} describes a spatially distributed particle 
system whose space and size density distribution $u(x,\xi,\cdot)$ is governed 
by the spatial diffusion $\mathcal{A}u$ with rate $\alpha$, by the size breakage/fragmentation 
$\mathcal{B}u$ with rate $\beta$ and by the size coalescence/coagulation $\mathcal{C}(u,u)$
with rate $\varkappa$. In these settings, $L^1$-norms 
\begin{equation}\label{eq1.2}
\|u(t)\|_{L^1(\mathbb{R}^d\times\mathbb{R}_+)},\quad 
\|u(t)\|_{L^1(\mathbb{R}^d\times\mathbb{R}_+,\xi d\xi dx)},\quad t\ge 0,
\end{equation}
represent the total number of particles and the total mass of the evolving system, respectively.

Size-discrete ($\xi\in \mathbb{N}$) and size-continuous ($\xi\in\mathbb{R}_+$)
frag\-mentation-coa\-gulation models, with and without diffusion, 
arise naturally e.g. in the aerosols dynamics \cite{Drake1972, Fr2000},  
in polymers kinetics \cite{AizBak1979, Zif1980, Zif1986},
in astrophysics \cite{Saf1972},
in animal grouping \cite{DegLiuPeg2017},
in phytoplankton dynamics \cite{AckFit1997, AriRud2004, DamDra1995, Jac1990}, etc.,
and hence, attracted a significant attention of the research community. 
In particular a substantial progress is achieved in theoretical studies of space homogeneous 
(diffusion free) discrete and continuous coagulation-fragmentation models. We refer reader 
to the recent monograph \cite{BanLamLau2019I, BanLamLau2019II} for a comprehensive 
discussion of old and new results obtained in this direction. In contrast, mathematical results 
for space inhomogeneous  diffusion-fragmentation-coagulation processes 
are scarce in the literature. 
Most of the available results in this direction are obtained for the discrete breakage/coalescence
models (see e.g. \cite{BenWrz1997,LauMis2002b,Wrz1997,Wrz2004} and discussion in 
\cite[Section]{BanLamLau2019II}), 
while the continuous scenario remains largely unexplored.  
In the latter case, the major technical difficulty is related to the quadratic 
coagulation operator.  In addition to the non-local convolution-like size interactions, 
$\mathcal{C}(u,u)$ involves local in space point-wise products $u(x,\eta,\cdot)u(x,\xi-\eta,\cdot)$ 
and $u(x,\eta,\cdot)u(x,\xi,\cdot)$ that are undefined in physically natural
$L^1\bigr(\mathbb{R}^d\times\mathbb{R}_+, (1+\xi)d\xi dx\bigr)$-settings, see \eqref{eq1.2} above. 
For $d=1$, the problem can be partially alleviated using smoothing and/or hyper-contractivity 
properties of the diffusion process. In higher dimensions however, as well as for unbounded 
fragmentation and coagulation rates, the $L^1$-type spaces are too large to adequately capture 
the dynamics of \eqref{eq1.1}. In these scenarios, suitable choice of a state space is a delicate issue.

The first rigorous study of \eqref{eq1.1} appeared in \cite{Am2000},
where the model is treated as an abstract semilinear parabolic problem in the 
vector-valued Sobolev-Slobodeckii state space 
$W^{1,s}\bigl(\mathbb{R}^d, L^1(\mathbb{R}_+,(1+\xi)d\xi)\bigr)$, $s\notin\mathbb{N}$. 
In this functional setting and for bounded, uniformly elliptic diffusion coefficient and bounded 
coagulation and fragmentation rates, the model is locally-wellposed 
in all space dimensions $d\ge 1$ and yields strong positive mass conserving solutions for 
positive regular data $u_0$. Further, the classical positive solutions are globally defined 
when either $d=1$; or $d\ge 1$ and the diffusion coefficient $\alpha$ is $\xi$-independent. 
We mention that the last assumption can be slightly relaxed using maximal elliptic 
regularity/perturbation results as in e.g. \cite{CanDesFel2014}.

The analysis of \cite{Am2000}, and of the closely related paper \cite{AmWeb2001}, relies on 
a Fourier-multiplier result (see \cite[Theorem~7.3]{Am1997}) that fails for general 
Banach-valued boundary value problems (see the discussion in \cite{Wal2025a}) and/or problems 
with unbounded/vanishing diffusion rates. A potential remedy is to swap roles of the space
($x\in\mathbb{R}^d$) and the size ($\xi\in\mathbb{R}_+$) variables and view solutions as elements 
of  weighted Sobolev-valued state space 
$L^1\bigl(\mathbb{R}_+,(1+\xi)d\xi; W^{s,p}(\mathbb{R}^d)\bigr)$. Though when $p>1$
these state spaces have no obvious physical interpretation, they greatly simplify mathematical analysis of 
the linear diffusion-fragmentation part of the model. In particular, the diffusion semigroup of 
the entire particle system is obtained by gluing together scalar semigroups associated to individual 
particle sizes and hence, there is no need to deal with technically complicated vector-valued settings.  
For models, equipped with bounded, uniformly elliptic diffusion and 
bounded fragmentation and coagulation rates, this approach is adopted in \cite{AmWal2005}.
In that work, global well-posedness  is demonstrated for $d=1$; for $d\ge 1$ and size-independent 
diffusion; and for $d\ge 1$ and general size-dependent diffusion, but in the absence of fragmentation 
and for small initial data. These results were subsequently extended to uniformly elliptic 
unbounded diffusion rates in \cite{Wal2005b}. 

It is worth mentioning that the boundedness assumptions adopted in 
\cite{Am2000,AmWal2005,AmWeb2001} are too restrictive for many realistic applications. 
In general, the diffusion, the fragmentation and the coagulation rates are size-depend and may vary 
by orders of magnitude for very small and very large particles. For instance, it is natural to expect very fast
(unbounded) fragmentation and very slow (vanishing) diffusion rates for extremely large and unstable 
particle aggregates. We note that, the function-theoretic framework of \cite{AmWal2005} is sufficiently 
flexible to accommodate the vanishing/unbounded diffusion and/or the unbounded fragmentation 
and, at least locally, the unbounded coagulation rates. In the absence of coagulation, some results 
in this direction are obtained e.g. in \cite{Ban2007}, while local classical well-posedness of the 
complete nonlinear model \eqref{eq1.1} is investigated recently in \cite{BanMaj2026}. 

To the best of author's knowledge, no global in time well-posedness results for \eqref{eq1.1}
with vanishing diffusion are available in the literature. The only notable exceptions, two 
papers \cite{LauMis2002,MisRod2003}, deal with global weak $L^1$ solutions that need not be unique
and require rather strong structural assumptions (the detailed balance condition) on the coagulation 
and fragmentation kernels. It is the main purpose of this paper to investigated 
global classical solutions to \eqref{eq1.1} with vanishing diffusion and unbounded 
fragmentation and coagulation rates. 

In our analysis, we return to the $L^1\bigl(\mathbb{R}_+, (1+\xi^\ell)d\xi\bigr)$-valued 
settings originated (with $\ell=1$) in \cite{Am2000}. As noted earlier, the vector-valued 
Fourier-multiplier-type generation results of \cite{Am1997} fail for general vanishing diffusion 
coefficients. In the paper, we employ a combination of Green's functions and duality techniques 
to show that suitable realizations of the diffusion ($\mathcal{A}$) and the fragmentation-loss 
($\mathcal{B}^-$) operators generate  positive, analytic, hyper-contractive vector-valued 
$C_0$-semigroup, provided the diffusion is suitably dominated by the fragmentation processes. 
Under additional restrictions on the fragmentation kernel $\gamma$, 
a simple perturbation argument extends the basic generation result from $\mathcal{A}-\mathcal{B}^-$ 
to the complete diffusion-fragmentation operator $\mathcal{A}+\mathcal{B}$. Once this fact is established, 
the standard theory of semi-linear problems yields existence of local in time, mass-conserving, 
positive classical solutions to \eqref{eq1.1}. Existence of global classical solutions requires additional 
structural restrictions on the model coefficients. In the paper, we construct such solutions in the case 
of power diffusion, fragmentation and coagulation rates. To the best of our knowledge this is the first 
result of this type in the literature. 

The paper is organized as follows: in Section~\ref{sec2}, we fix our notation, 
list basic assumption related to the diffusion, fragmentation and coagulation processes 
and outline main results of the paper. The generation of a positive, analytic 
$C_0$-diffusion-fragmentation semigroup is discussed in detail in Sections~\ref{sec3}. 
Section~\ref{sec4} deals with local in time classical solvability of the complete nonlinear 
model \eqref{eq1.1}. Existence of global solutions for \eqref{eq1.1} with power rates 
is investigated in Section~\ref{sec5}. Section~\ref{sec6} concludes the paper.

\section{Assumptions and main results}\label{sec2}

This section is introductory. Here, we fix our notation, list basic assumptions related to 
the diffusion, fragmentation and coagulation operators that appear in \eqref{eq1.1} and 
state main results of the paper. 

\subsection{Notation}\label{sec2.1}
Throughout the paper, we let $\mathbb{R}_+:=(0,\infty)$ and use $\mathcal{S}(d)_+$ to denote 
the cone of positive definite symmetric matrices in $\mathbb{R}^{d\times d}$. For a measurable 
$\alpha : \mathbb{R}^d \to \mathcal{S}(d)_+$, we define
\begin{align*}
&\underline{\lambda}(\alpha) := \essinf_{x\in\mathbb{R}^d}\min_{|z|=1} z^T\alpha (x)z,\quad
|\alpha| :=\esssup_{x\in\mathbb{R}^d }\max_{|z|=1} z^T\alpha(x)z,\quad
\kappa(\alpha) := \tfrac{|\alpha|}{\underline{\lambda}(\alpha)},
\end{align*}
and employ
\[
W_+^{1,\infty}(\mathbb{R}^d; \mathbb{R}^{d\times d}) := 
\bigl\{a\in W^{1,\infty}(\mathbb{R}^d; \mathbb{R}^{d\times d})\,|\,
\alpha : \mathbb{R}^d \to \mathcal{S}(d)_+,\,\underline{\lambda}(\alpha)>0\bigr\},
\]
to denote the cone of positive definite $\mathcal{S}(d)_+$-valued matrix functions in 
$W^{1,\infty}(\mathbb{R}^d; \mathbb{R}^{d\times d})$.
We use $B(x,r)$ and $|B(x,r)|$ for the open ball of radius 
$r>0$ centered at $x\in\mathbb{R}^d$ and for its respective Lebesgue measure. 
For $u\in L^1_{\text{loc}}(\mathbb{R}^d)$, 
\[
\mathcal{M}_\delta[u](x) := \sup_{r>0} |B(x,r)|^{\delta-1} \int_{B(x,r)} |u(y)| dy,\quad \delta\in [0,1),
\]
denotes the standard centered fractional Hardy-Littlewood maximal function.
Throughout the paper, letters $c$ and $C$ are reserved for positive constants whose concrete 
values are irrelevant and may vary from line to line. In general, these constants are controlled
by some parameters. Occasionally, we list them either as subscripts or as arguments. 
We use also $\wedge$ and $\vee$ to denote the standard lattice operations in $\mathbb{R}$. 

For a Banach space $X$, symbols $C_c(\mathbb{R}^d;X)$ and $C_0(\mathbb{R}^d;X)$ 
denote spaces of continuous compactly supported, respectively continuous vanishing at infinity, 
$X$-valued functions. In calculations, we make use also of the weighted space 
\[
C_s\bigl((0,T]; X\bigr) := \bigl\{u\in C((0,T]; X)\,\bigl|\, \lim_{t\to 0^+} t^s\|u(t)\|_X = 0\bigr\},\; 
0<T<\infty,\; s>0.
\]
As mentioned in Section~\ref{sec1}, our analysis makes use of Lebesgue-Bochner spaces 
of $L^1$-valued functions. 
To simplify the notation, for a.e. positive $w\in L^1_{\text{loc}}(\mathbb{R}_+)$ and 
$\underline{\beta}\in L^\infty_{\text{loc},+}(\bar{\mathbb{R}}_+)$, defined below, we let
\begin{align*}
&X^p_{w,s} := L^p(\mathbb{R}^d;L^1\bigl(\mathbb{R}_+,w\underline{\beta}^sd\xi)\bigr),\quad 
s\ge 0,\quad 1\le p<\infty,\\
&X^p_{w} := X^p_{w,0},\quad X^p := X^p_{1,0},\quad 1\le p<\infty.
\end{align*}
In the special case of $w_\ell(\xi):=1+\xi^\ell$, $\ell>0$, we abbreviate
\begin{align*}
&X^p_{\ell,s} := X^p_{w_\ell,s},\quad 
X^p_{\ell} := X^p_{w_\ell,0},\quad \ell,s>0, \quad 1\le p<\infty.
\end{align*}
We let also
\begin{align*}
Y^p_{\ell,s} := X^1_{\ell,s} \cap X^p_{\ell,s},\quad \ell,s\ge 0,\quad 1<p<\infty.
\end{align*}
Finally, throughout the paper we use $\mathcal{X}_+$, 
$\mathcal{X}\in \{X^p_{w,s}, X^p_{\ell,s}, X^p_{w}, X^p_{\ell}, X^p, Y^p_{\ell,s}\}$,
to denote the positive cones of the respective spaces.
 
\subsection{Assumptions}\label{sec2.2}

\subsubsection{The diffusion operator}
The analysis of operator $\mathcal{A}$ relies on the following minimal assumptions:
\begin{subequations}\label{eq2.1}
\begin{align}
\label{eq2.1a}
&\alpha\in L^{\infty}\bigl(\mathbb{R}_+; W_+^{1,\infty}(\mathbb{R}^d; \mathbb{R}^{d\times d})\bigr),\\
\label{eq2.1b}
&\kappa(\alpha) := \esssup_{\xi\in\mathbb{R}_+} \kappa\bigl(\alpha(\xi)\bigr)<\infty.
\end{align} 
\end{subequations}
According to \eqref{eq2.1a}, the diffusion coefficient $\alpha(\xi)$ is upper bounded for 
a.a. values of $\xi\in\mathbb{R}_+$ but is allowed to vanish on a subset of 
$\mathbb{R}_+$ of measure zero and/or as $\xi\to+\infty$. This causes no technical 
difficulties as long as the essential condition number $\kappa(\alpha)$ is globally controlled 
by \eqref{eq2.1b}. 

\subsubsection{The fragmentation operator}
In our treatment of the fragmentation part of \eqref{eq1.1}, we follow closely \cite{Ban2007}.
We assume that
\begin{subequations}\label{eq2.2}
\begin{align}
\label{eq2.2a}
&\beta: \mathbb{R}_+\to L^\infty(\mathbb{R}^d)\;\;
\text{is strongly measurable},\\
\label{eq2.2b}
&0<\beta_0 \le \underline{\beta}(\xi) \le \|\beta(\xi)\|_{L^\infty(\mathbb{R}^d)} 
\le C_\beta \underline{\beta}(\xi)\;\;\text{a.e. in $\mathbb{R}_+$},
\end{align}
for some non-negative 
\begin{equation}\label{eq2.2c}
\underline{\beta}\in L^{\infty}_{\text{loc},+}(\bar{\mathbb{R}}_+)
\end{equation} 
and $C_\beta\ge 1$. We remark that \eqref{eq2.2b}, \eqref{eq2.2c} are natural. In particular, 
the first hypothesis guarantee that the fragmentation process is "isotropic" in space while 
the second one prevents occurrence of "shattering", see e.g. the discussion in 
\cite[Section 5.1.3.3]{BanLamLau2019I}.
\end{subequations}

Throughout the paper, we suppose that the fragmentation kernel $\gamma$ is non-negative, 
measurable in $\mathbb{R}_+^2$ and that the overall fragmentation process is conservative, i.e. 
\begin{subequations}\label{eq2.3}
\begin{align}
\label{eq2.3a}
&\gamma(\xi,\eta)\ge 0,\quad \xi,\eta\in\mathbb{R}_+,\quad  
\gamma(\xi,\eta) = 0,\quad 0<\eta<\xi,\\
\label{eq2.3b}
&\int_0^\eta \xi \gamma(\xi, \eta) d\xi = \eta,\quad \eta\in\mathbb{R}_+.
\end{align}

In connection with the fragmentation kernel $\gamma(\cdot,\cdot)$, we define
\begin{align*}
&\nu_\eta(s) := s \eta \gamma(s \eta,\eta), \quad s\in[0,1],\quad \eta\in\mathbb{R}_+,\\
&\sigma_{0,\ell} := \limsup_{\eta\to\infty} \eta^{-\ell} \int_0^\eta \gamma(\xi, \eta) d\xi 
= \limsup_{\eta\to\infty} \eta^{-\ell} \int_0^1 \nu_\eta(s) \tfrac{ds}{s},\quad \ell\ge 0,\\
&\sigma_{\ell} := \limsup_{\eta\to\infty}
\eta^{-\ell} \int_0^\eta \xi^{\ell} \gamma(\xi, \eta) d\xi
=\limsup_{\eta\to\infty}
\int_0^1 \nu_\eta(s) s^{\ell-1}ds,\quad \ell\ge 0,\\
&\bar{\ell}_0 := \inf\bigl\{ \ell\,\bigr|\, \sigma_{0,\ell'}<\infty,\;0\le \ell\le \ell' \bigr\},\quad
\bar{\ell}_1 := \inf\bigl\{ \ell\,\bigr|\, \sigma_{   \ell'}<\infty,\;0\le \ell\le \ell' \bigr\},
\end{align*}
and assume that 
\begin{align}
\label{eq2.3c}
&0\le \bar{\ell}_0 <\infty,\\
\label{eq2.3d}
&\sigma_{\infty} := \inf_{\ell\ge 1} \sigma_\ell = 0.
\end{align}
\end{subequations}
In addition to \eqref{eq2.2}--\eqref{eq2.3}, our global well-posedness analysis 
relies on the condition
\begin{equation}\label{eq2.4}
0\le \bar{\ell}_0,\bar{\ell}_1 <1.
\end{equation}

Few remarks are in place here. First, assumptions \eqref{eq2.3a} and \eqref{eq2.3b} are natural. 
Second, hypotheses \eqref{eq2.3c} and \eqref{eq2.3d} play an essential role in the analysis 
of the linear part of \eqref{eq1.1}.  To be specific, \eqref{eq2.3c} guarantees that the total number 
of particles appearing in each fragmentation event is globally controlled. 

Third, to attach a physical meaning to \eqref{eq2.3d}, we observe that the closed $s$-interval $[0,1]$ is 
separable and compact. Hence, by Prokhorov's theorem (see e.g. \cite[Section 5]{Bil1999}), 
the family $\mathcal{N}$ of probability measures with densities $\{\nu_\eta(s)\}_{\eta\in\mathbb{R}_+}$ 
is relatively weakly compact in the space of probability measures on $[0,1]$. As under \eqref{eq2.3b}, 
the quantities $\sigma_\ell$ are monotone non-increasing in $\ell$ for $\ell\ge 1$
(see \cite{BanJoSh2019} and  \cite[Theorem 5.1.47(c), pp.~223--224]{BanLamLau2019I}),
after some simple calculations, we arrive at
\[
\nu_\infty(\{1\}) = \sigma_\infty=0,
\]
for all weak accumulation points $\nu_\infty$ of $\mathcal{N}$. In plain words, 
\eqref{eq2.3d} indicates that in the generic situation modeled by $\gamma(\cdot,\cdot)$, 
massive particles almost surely (with the probability of $1-\sigma_\infty=1$) fragment 
into smaller pieces. 

Finally, \eqref{eq2.3d} is stronger than usual assumption that 
$\sigma_{\ell}<1$ for some  $\ell>1$, see e.g. the discussion in 
\cite[Section 5.1.7.2]{BanLamLau2019I} and in particular Remark 5.1.49 on page 226 there.
Likewise, \eqref{eq2.4} is stronger than \eqref{eq2.3c}.  
We note however, that hypotheses \eqref{eq2.3d} and \eqref{eq2.4} are not too restrictive 
and are satisfied for a large class of practical fragmentation kernels, including:
\begin{itemize}
\item[(i)] power kernels 
\[
\gamma(\xi,\eta) = \tfrac{\nu+2}{\eta}\bigl(\tfrac{\xi}{\eta}\bigr)^\nu,\quad -1<\nu\le 0,
\]
where $\bar{\ell}_0=\bar{\ell}_1 = 0$; 
\item[(ii)] homogeneous kernels 
\[
\gamma(\xi,\eta) = \tfrac{1}{\eta}h\bigl(\tfrac{\xi}{\eta}\bigr),\quad h(s)\ge 0,\quad 
\int_0^1 sh(s)ds = 1,
\] 
where $\bar{\ell}_0 = \bar{\ell}_1=0$, provided $h\in L^{p_h}((0,1);s^{\theta_h} ds)$, 
$1<p_h<\infty$ and $0\le \theta_h < 1 - p_h$;
\item[(iii)] separable kernels 
\[
\gamma(\xi,\eta) = \tfrac{\eta h_0(\xi)}{h_1(\eta)},\quad h_0(\xi)\ge 0,\quad  
h_1(\eta) = \int_0^\eta \xi h_0(\xi)d\xi,
\]
where  $\bar{\ell}_0 = 0$, $\bar{\ell}_1 = \tfrac{1}{p_{h_0}}$, 
provided $h_0(\xi)$ satisfies the one-sided doubling and the reverse H\"older inequalities
\begin{align*}
&\int_0^{2\eta} h_0(\xi) d\xi \le c_1\int_\eta^{2\eta} h_0(\xi) d\xi,\\ 
&\Bigr(\tfrac{1}{\eta}\int_0^\eta [\xi h_0(\xi)]^{p_{h_0}} d\xi\Bigl)^{\frac{1}{p_{h_0}}} 
\le \tfrac{c_2}{\eta}\int_0^\eta \xi h_0(\xi)d\xi, 
\end{align*}
with some $1<p_{h_0}<\infty$ and $c_1, c_2>0$, etc.
\end{itemize}

In the presence of vanishing diffusion, conditions \eqref{eq2.1} and \eqref{eq2.2} alone 
are insufficient to guarantee any form of hyper-contractivity of the semigroups 
associated to the linear part of \eqref{eq1.1}. Since the latter is essential 
in the analysis of the quadratic coagulation operator, throughout the paper we assume
\begin{equation}\label{eq2.5}
\kappa_\delta(\alpha,\beta) := 
\essinf_{\xi\in\mathbb{R}_+} |\alpha(\xi)|^\delta \underline{\beta}^{1-\delta}(\xi) >0,
\end{equation}
for some $0\le \delta<1$. 

\subsubsection{The coagulation operator}
In contrast to e.g. \cite{Am2000, AmWal2005, AmWeb2001} and as in e.g. 
\cite{BaJoSh2019b, BanLam2020}, in our analysis we permit unbounded coagulation kernels, 
provided the coagulation is dominated by the fragmentation processes. 
Our approach is close in spirit to \cite{BaJoSh2019b, BanLam2020} and \cite[Section 8.1.2]{BanLamLau2019II} 
where similar ideas are employed in context of simpler discrete and respectively, 
continuous fragmentation--coagulation models. 

To be specific, we assume that the coagulation kernel is measurable in 
$\mathbb{R}_+^2$, non-negative and symmetric, i.e.
\begin{subequations}\label{eq2.6}
\begin{align}
\label{eq2.6a}
&\varkappa:\mathbb{R}^2_+\to L^\infty_+(\mathbb{R}^d)\;\;
\text{is Bochner measurable},\\
\label{eq2.6b}
&\varkappa(\xi,\eta) = \varkappa(\eta,\xi)\;\; \text{a.e. in $\mathbb{R}_+^2$}.
\end{align}
In addition, we assume that the coalescence process is dominated by the fragmentation.
I.e., for some $0<\rho<1$, we have
\begin{equation}\label{eq2.6c}
\|\varkappa(\xi,\eta)\|_{L^\infty(\mathbb{R}^d)} \le c_\varkappa 
\bigl[\underline{\beta}^\rho(\xi) + \underline{\beta}^\rho(\eta)\bigr],
\end{equation}
\end{subequations}
a.e. in $\mathbb{R}_+^2$. 

\subsection{Main results}\label{sec2.3}

In Section~\ref{sec3}, we show that under hypotheses \eqref{eq2.1}--\eqref{eq2.3} and \eqref{eq2.5},  
suitable realizations of the diffusion and the fragmentation operators generate positive, analytic, 
hyper-contractive $C_0$-semigroups in $X^p_\ell$, $1\le p<\infty$, provided the moment 
parameter $\ell$ is sufficiently large.  
To state our first result, for $\ell>\bar{\ell}_0\vee 1$ and $(x,\xi)\in\mathbb{R}^d\times\mathbb{R}_+$, we let
\begin{align*}
&\mathcal{T}_{p,\ell} [u](x,\xi) := \nabla^T \alpha(x,\xi)\nabla u(x,\xi) - \beta(x, \xi) u(x,\xi),\\
&D(\mathcal{T}_{p,\ell}) := \Bigl\{u\in X^p_{\ell},\,\Bigr|\,
\nabla^T\alpha\nabla u,\, \beta u \in X^p_\ell\Bigr\},\\
&\mathcal{B}_{p,\ell}^+[u](x,\xi) := \int_\xi^\infty \gamma(\xi,\eta) 
\beta(x, \eta) u(x, \eta) d\eta,\\
&D(\mathcal{B}_{p,\ell}^+) := \Bigl\{ u\in X^{p}_{\ell}\,\Bigl|\,
\beta u\in X^{p}_{\ell}\Bigr\},
\end{align*}
where the differential expression $\nabla^T\alpha \nabla u$ is understood in the sense of distributions. 
In these settings, we have
\begin{theorem}\label{lm2.1}
Assume that \eqref{eq2.1}, \eqref{eq2.2} and \eqref{eq2.3} are satisfied.
Then for every $1\le p<\infty$, there exists $\ell_p>1$, so that the operator
\[
\bigl(\mathcal{L}_{p,\ell}, D(\mathcal{T}_{p,\ell})\bigr) 
:=\bigl(\mathcal{T}_{p,\ell}+\mathcal{B}_{p,\ell}^+, D(\mathcal{T}_{p,\ell})\bigr)
\]
generates a positive analytic 
$C_0$-semigroup $\bigl\{e^{t\mathcal{L}_{p,\ell}}\bigr\}_{t\ge 0}$
of growth type $\omega_{p,\ell}\ge 0$, in 
$X^p_{\ell}$, for each $\ell>\bar{\ell}_0\vee \ell_p$.
In addition, if \eqref{eq2.5} holds, then
\begin{equation}\label{eq2.7}
\|e^{t\mathcal{L}_{p,\ell}}\|_{
X^p_{\ell}\to 
X^q_{\ell,s}}
\le C_{p,q,s} e^{\omega_{p,\ell} t} t^{-s-\frac{d}{2\delta}(\frac{1}{p}-\frac{1}{q})},
\end{equation}
provided that 
\[
0\le s \le 1 - \tfrac{d}{2\delta}\bigl(\tfrac{1}{p}-\tfrac{1}{q}\bigr),\quad
1\le p\le q< \infty.
\]
The constant  $C_{p,q,s}>1$ is controlled by the quantities $d$, $p$, $q$, $s$, $\ell$,  
$\kappa(\alpha)$ and 
$\kappa_\delta(\alpha,\beta)$ only.
\end{theorem}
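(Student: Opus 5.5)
The plan has three steps. First, build the diagonal part $\mathcal{T}_{p,\ell}$ fibre by fibre in $\xi$. Second, add the gain term $\mathcal{B}^+_{p,\ell}$ as a relatively small perturbation. Third, obtain the hypercontractive bound \eqref{eq2.7} from pointwise heat-kernel estimates.

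\textbf{Step 1: the fibre semigroups.} Fix $\xi$. Put $A_\xi:=\nabla^T\alpha(\cdot,\xi)\nabla-\beta(\cdot,\xi)$ on $L^p(\mathbb{R}^d)$. Since $\alpha(\xi)\in W^{1,\infty}_+$, this is a uniformly elliptic divergence-form operator perturbed by the bounded potential $-\beta(\xi)\le-\beta_0$. It therefore has a positive Green's function $G_\xi(t,x,y)$ with Aronson-type Gaussian bounds. After rescaling time by $|\alpha(\xi)|$, the constants depend only on $d$ and $\kappa(\alpha)$, because \eqref{eq2.1b} controls the ellipticity ratio uniformly in $\xi$. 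The potential adds the factor $e^{-\beta_0 t}$, and more precisely the factor $e^{-t\underline{\beta}(\xi)}$ up to the constant $C_\beta$ from \eqref{eq2.2b}. In particular
\[
G_\xi(t,x,y)\le C\,(|\alpha(\xi)|t)^{-d/2}\exp\Bigl(-c\tfrac{|x-y|^2}{|\alpha(\xi)|t}\Bigr)e^{-t\underline{\beta}(\xi)/C_\beta}.
\]
Define $[e^{t\mathcal{T}}u](\xi):=e^{tA_\xi}u(\xi)$. By Minkowski's integral inequality (or a duality argument for $p>1$), the uniform $L^p$-contractivity and sectoriality of each $A_\xi$, with $\xi$-independent angle, carry over to the $L^1(w_\ell d\xi)$-valued space $X^p_\ell$. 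This yields a positive analytic $C_0$-semigroup whose generator has domain $D(\mathcal{T}_{p,\ell})$. Strong continuity follows by dominated convergence in $\xi$. Identifying the generator's domain exactly requires a resolvent computation that also uses $\beta u\in X^p_\ell$.

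\textbf{Step 2: adding $\mathcal{B}^+$.} I would show that $\mathcal{B}^+_{p,\ell}$ is $\mathcal{T}_{p,\ell}$-bounded with arbitrarily small relative bound once $\ell$ is large. By Minkowski and Fubini,
\[
\|\mathcal{B}^+u\|_{X^p_\ell}\le \Bigl\|\int_0^\infty \beta(\eta)|u(\eta)|\int_0^\eta(1+\xi^\ell)\gamma(\xi,\eta)\,d\xi\,d\eta\Bigr\|_{L^p}.
\]
For large $\eta$, the inner integral is at most $(\sigma_\ell+\epsilon)\eta^\ell+(\sigma_{0,\ell'}+\epsilon)\eta^{\ell'}$, using \eqref{eq2.3c} with $\ell'>\bar\ell_0$. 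Because of \eqref{eq2.3d} and the monotonicity of $\sigma_\ell$, one can choose $\ell$ with $\sigma_\ell$ small. Large $\eta$ then contribute at most $\theta\|\beta u\|_{X^p_\ell}$ with $\theta<1$. Bounded $\eta$ contribute at most $C\|u\|_{X^p_\ell}$, since $\underline\beta$ is locally bounded. The dependence of $\ell_p$ on $p$ enters through the constant relating $\|\beta u\|$ to $\|\mathcal{T}u\|$. Two facts then complete the step: $\mathcal{B}^+$ is positive, and $\|\beta u\|_{X^p_\ell}\le C\|\mathcal{T}_{p,\ell}u\|+C\|u\|$, which follows from the Gaussian bound and $L^p$-dissipativity of the diffusion part. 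The analytic perturbation theorem for small relative bound then gives analyticity. Positivity follows from the Dyson--Phillips series, or from the resolvent Neumann series $\sum (R\mathcal{B}^+)^n R$.

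\textbf{Step 3: the hypercontractive estimate \eqref{eq2.7}.} For the diagonal semigroup, the Gaussian bound gives
\[
\underline\beta^s(\xi)\|e^{tA_\xi}\|_{L^p\to L^q}\le C\,\underline\beta^s(\xi)\,(|\alpha(\xi)|t)^{-\frac d2(\frac1p-\frac1q)}e^{-ct\underline\beta(\xi)}.
\]
Using \eqref{eq2.5}, namely $|\alpha(\xi)|\ge \kappa_\delta^{1/\delta}\underline\beta^{-(1-\delta)/\delta}$, and writing $r:=\frac d2(\frac1p-\frac1q)$, this is bounded by
\[
C\,\underline\beta^{\,s+r(1-\delta)/\delta}\,t^{-r}e^{-ct\underline\beta}\le C\,t^{-s-r/\delta},
\]
since $\sup_{b>0} b^a e^{-ctb}\le C_a t^{-a}$. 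Integrating in $\xi$ via Minkowski, with $q\ge p$, gives \eqref{eq2.7} for $e^{t\mathcal{T}}$. To transfer it to $e^{t\mathcal{L}}$, I would use the Duhamel formula $e^{t\mathcal{L}}=e^{t\mathcal{T}}+\int_0^t e^{(t-\tau)\mathcal{T}}\mathcal{B}^+e^{\tau\mathcal{L}}\,d\tau$. Because $\mathcal{B}^+$ costs one power of $\underline\beta$ and the total exponent satisfies $s+r/\delta\le1$, the bound can be closed by a Gronwall/Volterra iteration in weighted $C_s$-type spaces.

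\textbf{Main obstacle.} I expect the hardest step to be the endpoint case $s+r/\delta=1$, where the singularity is borderline integrable. There I would first try splitting the weight $\underline\beta$ between the two factors, interpolating at the level $\underline\beta^{s'}$ with $s'<s$. A further difficulty is that the Duhamel term loses the small relative-bound structure in the $X^q$ setting, which I would recover using the same choice of large $\ell$ as in Step 2.
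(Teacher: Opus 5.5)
Your scalar ingredients are the right ones: the Gaussian bound with constants depending only on $d$ and $\kappa(\alpha)$ after rescaling by $|\alpha(\xi)|$, and the fibrewise computation with \eqref{eq2.5}. The gap is in how you lift them to $X^p_\ell=L^p\bigl(\mathbb{R}^d;L^1(\mathbb{R}_+,w_\ell d\xi)\bigr)$; that mechanism fails as soon as $p>1$.

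\textbf{The lifting step.} Minkowski's inequality runs the wrong way: $\bigl\|\,\|f\|_{L^1_\xi}\bigr\|_{L^p_x}\le\bigl\|\,\|f\|_{L^p_x}\bigr\|_{L^1_\xi}$. So integrating fibrewise $L^p\to L^q$ bounds in $\xi$ controls $e^{t\mathcal{T}}u$ only by $\|u\|_{L^1(\mathbb{R}_+,w_\ell d\xi;L^p(\mathbb{R}^d))}$, and that quantity is not dominated by $\|u\|_{X^p_\ell}$. Uniform $L^p$-bounds on the fibres do not transfer by duality either. For example, $\xi$-dependent translations are isometries on every fibre, yet they are unbounded on $L^p(L^1)$.

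What you need is control of $\sup_\xi|e^{\bar z\mathcal{T}_1(\xi)}v|$ for a single scalar test function $v\in L^{p'}(\mathbb{R}^d)$, i.e.\ a maximal-function estimate. This is the missing idea, and it is Lemma~\ref{lm3.4}. The Gaussian majorant is radially decreasing and depends on $\xi$ only through the scale $|\alpha(\xi)|$ and the factor $e^{-\underline\beta(\xi)\Rea z}$. Hence $\underline\beta(\xi)^s|e^{z\mathcal{T}_1(\xi)}v(x)|\le c(\Rea z)^{-s-\frac{d\delta'}{2\delta}}\mathcal{M}_{\delta'}[v](x)$ uniformly in $\xi$, with the analogue \eqref{eq3.8c} for the resolvent. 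Pair $\|e^{z\mathcal{T}}u\|_{L^1(w\,d\xi)}$ with $v$, and use that $\mathcal{M}_0$ is bounded on $L^{p'}$ and that $\mathcal{M}_{\frac1p-\frac1q}:L^{q'}\to L^{p'}$ is bounded (Muckenhoupt--Wheeden). This gives \eqref{eq3.7b}, \eqref{eq3.7c} and \eqref{eq3.7f}.

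Without this, all three of your steps are unsupported for $p>1$:
\begin{itemize}
\item the generation claim in Step 1, where one in fact only gets a bound $C_p\ge1$, not contractivity;
\item the inequality $\|\beta u\|_{X^p_\ell}\le C\|\mathcal{T}_{p,\ell}u\|+C\|u\|$ in Step 2, which you attribute to $L^p$-dissipativity;
\item the diagonal hypercontractive bound in Step 3.
\end{itemize}
The maximal function is also the real source of the $p$-dependence of $\ell_p$, through $\|\mathcal{M}_0\|_{L^{p'}\to L^{p'}}=\mathcal{O}(p)$. Once it is supplied, your Step 2 is an acceptable substitute for the paper's route. You use a small relative bound, the analytic perturbation theorem, and positivity from the Neumann series; the paper instead proves $\|\mathcal{B}^+_{p,\ell}(\lambda-\mathcal{T}_{p,\ell})^{-1}\|\le\frac34$ and invokes the Arendt--Rhandi theorem.

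\textbf{The transfer in Step 3.} Passing \eqref{eq2.7} from $e^{t\mathcal{T}}$ to $e^{t\mathcal{L}}$ by Duhamel does not close, and the problem is not confined to the endpoint. In $\int_0^t e^{(t-\tau)\mathcal{T}}\mathcal{B}^+e^{\tau\mathcal{L}}u\,d\tau$ the gain term is controlled only by a full power of $\underline\beta$ on its argument. So $e^{\tau\mathcal{L}}u$ has to be measured in some $X^{q_0}_{\ell,1}$, at a cost of at least $\tau^{-1}$. This is non-integrable at $\tau=0$ for every admissible $(q,s)$.

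Moving part of that weight onto $e^{(t-\tau)\mathcal{T}}$ would require $\mathcal{B}^+$ to map $X^q_{\ell,\vartheta}$ into $X^q_{\ell,\vartheta-1}$ for some $\vartheta<1$. That amounts to comparing $\underline\beta(\xi)$ with $\underline\beta(\eta)$ for $\xi<\eta$, which \eqref{eq2.2} does not give.

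The paper avoids Duhamel altogether:
\begin{itemize}
\item The Neumann series shows that $(\lambda-\mathcal{T}_{p,\ell})(\lambda-\mathcal{L}_{p,\ell})^{-1}$ is bounded on $X^p_\ell$.
\item Analyticity gives $\|(\lambda-\mathcal{L}_{p,\ell})e^{t\mathcal{L}_{p,\ell}}\|\lesssim t^{-1}e^{\omega t}$.
\item The endpoint resolvent embedding \eqref{eq3.7f} then yields \eqref{eq2.7} for $s=1-\frac{d}{2\delta}(\frac1p-\frac1q)$.
\item The remaining $(q,s)$ follow from the complex interpolation identity \eqref{eq3.13}.
\end{itemize}
You essentially had the case $q=p$, $s=1$ in hand from your Step 2 domain estimate. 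Only $\mathcal{L}_{p,\ell}$ and the embedding of $D(\mathcal{T}_{p,\ell})$ are used, so no $X^q$-realization of $\mathcal{L}$ is needed, and your worry about losing the relative-bound structure in $X^q$ disappears.
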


By virtue of Theorem~\ref{lm2.1},  assumption \eqref{eq2.6} and hyper-contractivity estimate \eqref{eq2.7}, 
\eqref{eq1.1} is semi-linear in $Y^p_\ell$, $1\le p < \infty$, for $\ell>0$ sufficiently large. 
In Section~\ref{sec4}, we use this fact to prove
\begin{theorem}\label{lm2.2}
Assume that $u_0\in Y^p_{\ell}$, conditions \eqref{eq2.1}--\eqref{eq2.3}, \eqref{eq2.5} and \eqref{eq2.6} 
are satisfied, 
\begin{subequations}\label{eq2.8}
\begin{equation}\label{eq2.8a}
2\le p ,\quad \tfrac{d}{2(1-\rho)\delta}<p <\infty
\end{equation}
and $T=T(u_0)>0$ is sufficiently small. 
\begin{itemize}
\item[(i)] If $\ell>\bar{\ell}_0\vee \ell_{p}$, there exists a unique local mild solution to \eqref{eq1.1} of class 
\begin{align}
\nonumber
u\in Z^p_{\ell,\rho}(T) & := C\bigl([0,T], Y^p_\ell)\cap C_\rho((0,T], Y^p_{\ell,\rho}\bigr)\\
\label{eq2.8b}
&\cap C_{\frac{d}{4p\delta}}\bigl((0,T], X^{2p}_\ell\bigr)
\cap C_{\rho+\frac{d}{4p\delta}}\bigl((0,T], X^{2p}_{\ell,\rho}\bigr).
\end{align}
\item[(ii)] If $\ell>\bar{\ell}_0\vee \ell_{2p}$, the mild solution is classical, i.e. 
\begin{equation}\label{eq2.8c}
u\in C\bigl([0,T], Y^p_\ell\bigr) \cap C^1\bigl((0,T), Y^{p}_\ell\bigr) 
\cap C\bigl((0,T], D(\mathcal{T}_{1,\ell})\cap 
D(\mathcal{T}_{p,\ell})\bigr)
\end{equation}
and furthermore, the flow map $[u_0\mapsto u]: Y^p_\ell\to C\bigl([0,T], Y^p_\ell\bigr)$ is locally 
Lipschitz.
\item[(iii)] Finally, for $\ell>\bar{\ell}_0\vee \ell_{2p}$ and $u_0\in Y^p_{\ell,+}$, 
the classical solution is positive and mass conserving, i.e.
\begin{equation}\label{eq2.8d}
\|u(t)\|_{X^1_{\xi}} = \|u_0\|_{X^1_{\xi}},\quad 0\le t \le T.
\end{equation}
\end{itemize}
\end{subequations}
\end{theorem}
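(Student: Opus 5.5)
The plan is a Banach fixed point for the Duhamel map
\[
\Phi[u](t) := e^{t\mathcal{L}_{p,\ell}}u_0 + \int_0^t e^{(t-\tau)\mathcal{L}_{1,\ell}\wedge\mathcal{L}_{p,\ell}}\,\mathcal{C}(u(\tau),u(\tau))\,d\tau
\]
in a closed ball of $Z^p_{\ell,\rho}(T)$, with weighted norm
\[
\|u\|_{Z} := \sup_{0\le t\le T}\Bigl(\|u(t)\|_{Y^p_\ell} + t^{\rho}\|u(t)\|_{Y^p_{\ell,\rho}} + t^{\frac{d}{4p\delta}}\|u(t)\|_{X^{2p}_\ell} + t^{\rho+\frac{d}{4p\delta}}\|u(t)\|_{X^{2p}_{\ell,\rho}}\Bigr).
\]
The semigroups on $X^1_\ell$ and $X^p_\ell$ from Theorem~\ref{lm2.1} act as a single family on $Y^p_\ell$, since they are consistent resolvent-wise. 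The first step is the linear part. By \eqref{eq2.7} with $q=p$, $s=\rho$ and with $q=2p$, $s\in\{0,\rho\}$, the free term $e^{t\mathcal{L}}u_0$ belongs to $Z^p_{\ell,\rho}(T)$. This requires $\rho+\frac{d}{4p\delta}\le 1$, which follows from \eqref{eq2.8a}, since $\frac{d}{2p\delta}<1-\rho$. The weighted $C_s$ conditions (vanishing as $t\to0^+$, not just boundedness) come from density of $D(\mathcal{T})$ in $Y^p_\ell$ combined with uniform boundedness.

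The second step is the bilinear estimate. Using \eqref{eq2.6c} and the weight inequality $w_\ell(\xi+\eta)\le C(w_\ell(\xi)+w_\ell(\eta))$, pointwise in $x$ one gets
\[
\|\mathcal{C}(u,v)(x)\|_{L^1(w_\ell)} \le C\bigl(\|u(x)\|_{L^1(w_\ell\underline{\beta}^\rho)}\|v(x)\|_{L^1(w_\ell)} + \text{sym.}\bigr).
\]
To make this work, $\underline{\beta}$ should be arranged to be essentially monotone, or one must use $\underline{\beta}^\rho(\xi)\le$ a sum of terms on the two arguments, handled via \eqref{eq2.2b}. Taking $L^1$ and $L^p$ norms in $x$ and applying H\"older gives
\[
\|\mathcal{C}(u,v)\|_{X^1_\ell}\lesssim \|u\|_{X^2_{\ell,\rho}}\|v\|_{X^2_\ell},\qquad \|\mathcal{C}(u,v)\|_{X^p_\ell}\lesssim \|u\|_{X^{2p}_{\ell,\rho}}\|v\|_{X^{2p}_\ell}.
\]
Here $X^2$ is obtained by interpolating between $X^1$ and $X^{2p}$, or directly from $Y^p$ since $p\ge2$. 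Then $\tau\mapsto \mathcal{C}(u(\tau),u(\tau))$ has singularity of order at most $\tau^{-\rho-\frac{d}{2p\delta}}$, which is integrable by \eqref{eq2.8a}. Applying \eqref{eq2.7} to the convolution produces Beta-integrals of the form $\int_0^t (t-\tau)^{-a}\tau^{-b}d\tau = Ct^{1-a-b}$, all with $a,b<1$. The result is a factor $T^{\varepsilon}$ with $\varepsilon=1-\rho-\frac{d}{2p\delta}>0$. This gives self-mapping and contraction for small $T$, and hence also local Lipschitz dependence of the flow. I expect the main obstacle to be the precise bookkeeping of exponents, together with justifying the $\underline{\beta}^\rho$-weighted product estimate when $\underline{\beta}$ is not monotone.

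For (ii), I would bootstrap regularity. With $\ell>\ell_{2p}$ the semigroup is also analytic on $X^{2p}_\ell$, and the bound $\mathcal{C}(u,u)\in X^p_{\ell,\rho'}$-type estimates give H\"older continuity of $t\mapsto \mathcal{C}(u(t),u(t))$ in $Y^p_\ell$ on $(0,T]$. The standard analytic-semigroup result (Pazy, Thm.~4.3.5) then yields the classical regularity \eqref{eq2.8c}.

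For (iii), positivity follows by adding $\mathcal{C}^-$-like loss terms to the linear part: one writes $u_t=(\mathcal{L}-\Lambda[u])u+\mathcal{C}^+(u,u)$ and iterates positive maps, or runs the fixed point on the positive cone. Mass conservation then follows from integrating against $\xi$. Using \eqref{eq2.3b} and the symmetry of $\varkappa$, both $\int\xi\mathcal{B}u\,d\xi=0$ and $\int\xi\mathcal{C}(u,u)\,d\xi=0$, and the divergence term integrates to zero. These formal identities are justified by truncation in $\xi$ using $\ell>1$ moment bounds and dominated convergence.
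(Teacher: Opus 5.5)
Parts (i) and (ii) of your plan follow the paper: a Banach fixed point in $Z^p_{\ell,\rho}(T)$ driven by \eqref{eq2.7} and the bilinear bound of Lemma~\ref{lm4.4}, then H\"older continuity of the nonlinearity and analytic-semigroup regularity. You route the Duhamel term through $e^{(t-\tau)\mathcal{L}_{p,\ell}}\colon X^p_\ell\to X^{2p}_{\ell,s}$, with $\mathcal{C}(u,u)$ measured in $X^p_\ell$ by $X^{2p}$-norms. The paper instead uses $X^{p/2}_\ell\to X^p_{\ell,s}$ and $X^{2p/3}_\ell\to X^{2p}_{\ell,s}$. Your route gives the same gain $t^{1-\rho-\frac{d}{2p\delta}}$ and uses only the semigroups on $X^1_\ell$ and $X^p_\ell$. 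The monotonicity of $\underline{\beta}$ you worry about is not needed. After Fubini in the gain term, the factor from \eqref{eq2.6c} is evaluated at the colliding sizes $\eta,\eta'$, never at $\eta+\eta'$. In (ii), what you really need is H\"older continuity of $u$ itself in $Y^p_{\ell,\rho}\cap X^{2p}_{\ell,\rho}$ on $[\varepsilon,T]$ (Lemma~\ref{lm4.8}); this then transfers to $\mathcal{C}(u,u)$ by bilinearity.

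The genuine gap is in (iii). Moving the loss into the linear part, $u_t=(\mathcal{L}-\Lambda[u])u+\mathcal{C}^+(u,u)$, is the right idea. But everything rests on a positive evolution family for the non-autonomous operator $\mathcal{L}-\Lambda(t)$, and your sketch does not provide one. Here $\Lambda(t)$ is multiplication by $\int_{\mathbb{R}_+}\varkappa(x,\xi,\eta)\,u(x,\eta,t)\,d\eta\lesssim(1+\underline{\beta}(\xi))^\rho\,\|u(x,\cdot,t)\|_{L^1(\mathbb{R}_+,w_\ell\underline{\beta}^\rho d\xi)}$. This is unbounded in $\xi$ and, for lack of $X^\infty$ bounds, only $L^{2p}$ in $x$. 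So it cannot be absorbed by a constant shift. Neither your iteration nor a fixed point on the positive cone is available until this family exists, together with its Lipschitz dependence on the potential.

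The paper builds the corresponding object in Lemma~\ref{lm4.5}, for the dominating potential $(1+\underline{\beta})^\rho\bar v$. It proceeds in three steps:
\begin{itemize}
\item relative bound zero through \eqref{eq2.7}, namely estimate \eqref{eq4.7}, which is where $p>\frac{d}{4(1-\rho)\delta}$ enters;
\item generation by Arendt--Rhandi, and positivity of each frozen operator by a Lie--Trotter product;
\item the non-autonomous theory of Amann and Lunardi, which needs the potential to be H\"older on the closed interval $[0,T]$.
\end{itemize}
For data merely in $Y^p_\ell$, the $L^{2p}(\mathbb{R}^d)$-size of the potential is only controlled by $\|u(t)\|_{X^{2p}_{\ell,\rho}}=o\bigl(t^{-\rho-\frac{d}{4p\delta}}\bigr)$, so it is not H\"older up to $t=0$. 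The paper therefore first treats $u_0\in D(\mathcal{T}_{1,\ell})\cap D(\mathcal{T}_{p,\ell})$, where Remarks~\ref{lm4.7}, \ref{lm4.10} and \ref{lm4.11} give strict solutions and H\"older potentials up to $t=0$. It then approximates a general $u_0\ge 0$ by $\frac1h\int_0^h e^{\tau\mathcal{L}_{p,\ell}}u_0\,d\tau$, using the Lipschitz flow from (ii) and closedness of the positive cone. None of this appears in your sketch.

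Your structural choice is in fact the safer one, provided the coefficient and the gain are built from $|u|$ or from the previous, already nonnegative, iterate. A source such as $(1+\underline{\beta})^\rho\bar v\,\hat u+\mathcal{C}(|\hat u|,|\hat u|)$, as in \eqref{eq4.22c}, has controlled sign only where $\hat u\ge 0$.
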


In the presence of vanishing diffusion, global well-posedness analysis requires 
low-order-moment/higher-order-integrability ($0\le \ell\le 1$, $2\le p<\infty$) estimates. 
In Section~\ref{sec5}, we construct such estimates in the special case of power 
diffusion, fragmentation and coagulation rates. To be specific, in addition to \eqref{eq2.1}--\eqref{eq2.6},
we assume
\begin{subequations}\label{eq2.9}
\begin{align}
\label{eq2.9a}
&\alpha(\xi, x) = \alpha(\xi) I_d,\;\;\; 
\underline{c}_\alpha  \le \alpha(\xi)(1+\xi)^{\frac{2}{d}\theta_\alpha} \le \bar{c}_\alpha,
\;\;\; 0<\theta_\alpha,\; 0<\underline{c}_\alpha\le \bar{c}_\alpha,\\
\label{eq2.9b}
&\underline{c}_\beta\le \underline{\beta}(\xi) (1+\xi)^{-\theta_\beta}\le \bar{c}_\beta,
\;\;\; 0<\theta_\beta,\; 0<\underline{c}_\beta\le \bar{c}_\beta,\\
&
\label{eq2.9c}
\text{$\alpha(\cdot)$ and $-\underline{\beta}(\cdot)$ are a.e. non-increasing in $\mathbb{R}_+$},
\end{align}
\end{subequations}
where $I_d\in\mathbb{R}^{d\times d}$ is the identity matrix.
In these settings, we obtain
\begin{theorem}\label{lm2.3}
Assume $\ell>\bar{\ell}_0\vee \ell_{2p}$, $u_0\in Y^p_{\ell,+}$ and conditions 
\eqref{eq2.1}--\eqref{eq2.6}, \eqref{eq2.8a} and \eqref{eq2.9} are satisfied. 
If
\begin{equation}\label{eq2.10}
0<\theta_\alpha< \theta_\beta \wedge (\bar{\ell}_1+(1-\rho)\theta_\beta),\quad
\theta_\beta < \tfrac{2\delta(1-\bar{\ell}_1)p'}{d+2\delta p'} \wedge (1-\bar{\ell}_0),
\end{equation}
then the non-negative classical solutions of Theorem~\ref{lm2.2}(ii)-(iii) are globally defined.
\end{theorem}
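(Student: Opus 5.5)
We argue by the standard continuation principle for the semilinear problem of Theorem~\ref{lm2.2}. By Theorem~\ref{lm2.2}(ii), the local existence time $T(u_0)$ depends only on (an upper bound for) $\|u_0\|_{Y^p_\ell}$. Hence the classical solution extends to a maximal interval $[0,T_{\max})$, and either $T_{\max}=\infty$ or $\|u(t)\|_{Y^p_\ell}\to\infty$ as $t\to T_{\max}^-$. It therefore suffices to prove that for every finite $T^\ast$ the norms $\|u(t)\|_{X^1_\ell}$ and $\|u(t)\|_{X^p_\ell}$ stay bounded on $[0,T^\ast)\cap[0,T_{\max})$. By Theorem~\ref{lm2.2}(iii), $u\ge 0$ and the mass $\|u(t)\|_{X^1_\xi}$ is conserved, which gives a first a priori bound. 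We then build the remaining estimates in two layers: low moments with high integrability ($0\le\ell'\le 1$, $L^p$ in $x$), followed by high moments $\ell$.

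\textbf{Step 1: low-order $L^p$ estimate.} We test the equation with $\xi^{m}\,\Phi(x)^{p-1}$, where $\Phi(x):=\int_{\mathbb{R}_+}w(\xi)u(x,\xi)\,d\xi$ is a weighted local density and $w(\xi)=\xi^{m}$ with $m\in[\bar\ell_1\vee\bar\ell_0,1)$ chosen compatibly with \eqref{eq2.10}. Several structural features should make this work.
\begin{itemize}
\item Since $\alpha=\alpha(\xi)I_d$ is scalar and non-increasing in $\xi$ by \eqref{eq2.9a}, \eqref{eq2.9c}, the diffusion term gives a dissipation of the form $\int\alpha(\xi)\,\xi^m\,\nabla u\cdot\nabla\Phi^{p-1}$. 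After using the monotonicity in $\xi$, this is bounded below by a weighted gradient term of size $(1+\xi)^{-2\theta_\alpha/d}$.
\item For $m<1$, the fragmentation gain minus loss produces a term $\int\beta\,u\,\xi^m\bigl(\sigma_m\text{-type factor}-1\bigr)$. This is controlled via $\sigma_{0,\ell}$ and $\sigma_\ell$ near $\bar\ell_0,\bar\ell_1$, giving growth at most $\underline\beta\,\xi^{m}$; with $\theta_\beta<1-\bar\ell_0$ it is dominated by mass once we interpolate.
\item For the coagulation term, $\xi^m$ is subadditive, so $\mathcal C$ tested with $\xi^m$ is $\le \int\!\!\int\varkappa\,[(\xi+\eta)^m-\xi^m-\eta^m]\,u u\le 0$ in the weighted-moment sense. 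The $\Phi^{p-1}$ factor is handled by pointwise bounds in $x$. This is where the condition $\theta_\alpha<\bar\ell_1+(1-\rho)\theta_\beta$ together with \eqref{eq2.6c} is used to absorb the remaining part of the gain term into diffusion plus loss.
\end{itemize}
Combining these, we obtain $\frac{d}{dt}\|\Phi\|_{L^p}^p\le C(1+\|\Phi\|^p_{L^p})$ plus absorbed terms, and Gronwall closes the estimate.

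\textbf{Step 2: bootstrap to $X^p_\ell$ via Duhamel.} We write $u(t)=e^{t\mathcal L_{p,\ell}}u_0+\int_0^t e^{(t-s)\mathcal L}\mathcal C(u,u)\,ds$ and estimate the coagulation term using \eqref{eq2.7} and \eqref{eq2.6c}. The factor $\underline\beta^\rho$ is absorbed by taking $s=\rho$ in \eqref{eq2.7}, and the quadratic product is estimated as one factor in the low-moment $L^{p}$ space from Step~1 times one factor in the moment-$\ell$ space. Using the power laws \eqref{eq2.9a}, \eqref{eq2.9b}, the condition $\theta_\beta<\frac{2\delta(1-\bar\ell_1)p'}{d+2\delta p'}$ is exactly what makes the moment interpolation between $\xi^{m}$ (with $m<1-\bar\ell_1$ available) and $\xi^\ell$ close with a time-singularity exponent less than $1$. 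This uses $p>\frac{d}{2(1-\rho)\delta}$ from \eqref{eq2.8a}. The resulting estimate is linear in the high-moment norm with a locally bounded coefficient, so a singular Gronwall lemma gives the bound on $\|u\|_{Y^p_\ell}$ on bounded intervals. The $X^1_\ell$ part is obtained in the same way, or directly from the moment ODE using $\sigma_\infty=0$.

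\textbf{Main obstacle.} The crux is Step~1. For vanishing diffusion there is no smoothing uniform in $\xi$, and the coagulation gain $\mathcal C^+$ creates large particles whose diffusion rate $\alpha$ is small. Closing the $L^p$ estimate thus requires balancing three power exponents: $\theta_\alpha$ from \eqref{eq2.9a}, $\theta_\beta$ from \eqref{eq2.9b}, and $\rho$ from \eqref{eq2.6c}, and this balance is precisely what \eqref{eq2.10} encodes. My first attempt would use the splitting $\varkappa\le c(\underline\beta^\rho(\xi)+\underline\beta^\rho(\eta))$, placing the $\underline\beta^\rho$ weight on the factor that is then controlled by the fragmentation loss via Young's inequality. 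The other factor would be controlled in $L^\infty_x$-type norms through a Gagliardo–Nirenberg inequality with $\xi$-dependent constants, using the diffusion dissipation of Step~1.
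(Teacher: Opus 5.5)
There is a genuine gap in Step~1, at exactly the point you call the crux. Testing with $\xi^m\Phi^{p-1}$, the diffusion contributes
\[
-(p-1)\int_{\mathbb{R}^d}\Phi^{p-2}\,\nabla\Phi\cdot\nabla\Psi\,dx,\qquad
\Psi(x):=\int_{\mathbb{R}_+}\xi^m\alpha(\xi)\,u(x,\xi)\,d\xi .
\]
Because $\alpha$ depends on $\xi$, $\Psi$ is not a multiple of $\Phi$, and $\nabla\Phi\cdot\nabla\Psi$ has no sign. Take a two-size caricature $\xi_1<\xi_2$ with $\alpha_1:=\alpha(\xi_1)>\alpha_2:=\alpha(\xi_2)$, which is exactly what \eqref{eq2.9c} allows. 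Put $U_i:=\xi_i^m u(\cdot,\xi_i)$ and $\nabla U_2=-c\nabla U_1$. Then $\nabla\Phi\cdot\nabla\Psi=(1-c)(\alpha_1-c\alpha_2)|\nabla U_1|^2<0$ for $1<c<\alpha_1/\alpha_2$. Monotonicity of $\alpha$ does not restore a sign, so there is no dissipation to feed into a Gagliardo--Nirenberg inequality. This is the classical obstruction to $L^p$ bounds for systems with distinct diffusion rates (cf.~\cite{CanDesFel2014}).

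Without that dissipation, the fragmentation term cannot be closed either. For $m<1$ one has $\int_0^\eta\xi^m\gamma(\xi,\eta)\,d\xi\ge\eta^m$, so fragmentation is a net gain comparable to $\underline{\beta}(\eta)\eta^m$, i.e.\ the $(m+\theta_\beta)$-moment density. The term $\int\Phi^{p-1}\int(1+\xi)^{m+\theta_\beta}u\,d\xi\,dx$ is not controlled by $\|\Phi\|^p_{L^p}$ and the conserved mass (an $L^1_x$ quantity) without an integrability gain in $x$. By contrast, the coagulation term you treat as delicate is harmless in an energy identity: $\int\xi^m\mathcal{C}(u,u)\,d\xi\le 0$ pointwise in $x$ by subadditivity.

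The missing idea is the paper's size-monotonicity argument, which uses the mild formulation pointwise in $x$ rather than energy estimates.
\begin{itemize}
\item Write $u$ by Duhamel with respect to the scalar loss-diffusion semigroup $e^{t\underline{\mathcal{T}}_{p,\ell}}$ (loss rate $\underline{\beta}$), and drop $-(\beta-\underline{\beta})u$ by positivity.
\item Use \eqref{eq5.4}: $\alpha(\xi)^{d/2}e^{t\underline{\mathcal{T}}_{p,\ell}(\xi)}f$ is non-increasing in $\xi$ for $f\ge 0$. This holds because $t^{d/2}e^{t\Delta}f$ increases in $t$ while $\alpha$ and $-\underline{\beta}$ are non-increasing.
\item Combined with subadditivity of $w_{r+\theta_\alpha}$ for $r\le 1-\theta_\alpha$, this makes the coagulation Duhamel term non-positive in $L^1(\mathbb{R}_+,w\,d\xi)$ pointwise in $x$, with $w=w_{r+\theta_\alpha}\alpha^{d/2}\approx w_r$.
\item The fragmentation gain is handled by the moment-regularizing hyper-contractive bound \eqref{eq3.7c}, which trades the moment $\theta\theta_\beta$ for integrability (Lemma~\ref{lm5.3}).
\item Iterating from the $X^1_1$ bound of Lemma~\ref{lm5.2} (which uses $\theta_\beta\le 1-\bar{\ell}_0$ and mass conservation), with $\theta=\theta_\alpha/\theta_\beta$, gives bounds in $X^p_{\bar{\ell}_1+\theta_\beta-\theta_\alpha}$ (Corollary~\ref{lm5.4}).
\end{itemize}
The condition $\theta_\beta<\frac{2\delta(1-\bar{\ell}_1)p'}{d+2\delta p'}$ is what lets this iteration reach $L^p$ before the moment falls below $\bar{\ell}_1+(1-\theta)\theta_\beta$. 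It is not a time-singularity condition for Step~2, as you suggest.

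Your Step~2 does match the paper's final argument, but it needs exactly this input. The condition $\theta_\alpha<\bar{\ell}_1+(1-\rho)\theta_\beta$ gives $\rho\theta_\beta\le\bar{\ell}_1+\theta_\beta-\theta_\alpha$. Hence the low-moment factor $\|u\|_{X^p_{\rho\theta_\beta}}$ arising from \eqref{eq4.4} is bounded by Corollary~\ref{lm5.4}. The Volterra inequality is then linear in the high-moment norm, and Lemma~\ref{lm4.6} closes it. The $X^1_\ell$ part follows via $\|u\|_{X^2_r}\le\|u\|_{X^1_r}^{1-p'/2}\|u\|_{X^p_r}^{p'/2}$ and Young's inequality.
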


Proofs of Theorems~\ref{lm2.1}--\ref{lm2.3} occupy the rest of the paper.

\section{The diffusion-fragmentation process}\label{sec3}

In this section, we show that under assumptions \eqref{eq2.1}--\eqref{eq2.3}, 
a suitable realization of $\mathcal{A}+\mathcal{B}$ from \eqref{eq1.1} generates a 
positive analytic $C_0$-semigroup in $X^p_{\ell}$, $1\le p<\infty$, settings.

\subsection{The scalar diffusion}\label{sec3.1}

To begin, we list standard facts related to the scalar diffusion.
For $a \in W_+^{1,\infty}(\mathbb{R}^d; \mathbb{R}^{d\times d})$
and $b\in L^{\infty}_{+}(\mathbb{R}^d)$, the second-order operator
\begin{equation}\label{eq3.1}
\mathscr{A}_0 u = \nabla^T a \nabla u - b u,\quad u\in \mathcal{D}(\mathbb{R}^d),\\
\end{equation}
is uniformly elliptic. The standard theory of such operators, 
see e.g. \cite[Theorems 1.4.1 and 1.4.2, pp. 22--24]{Dav1989}, implies that
suitable realizations $\bigr(\mathscr{A}_{p}, D(\mathscr{A}_{p})\bigr)$ of 
$\mathscr{A}_{0}$ generate positive semigroups of contractions 
$\bigl\{e^{t\mathscr{A}_{p}}\bigr\}_{t\ge 0}\subset\mathcal{L}\bigl(L^p(\mathbb{R}^d)\bigr)$, 
$1\le p\le \infty$. In particular,
\begin{subequations}\label{eq3.2}
\begin{equation}\label{eq3.2a}
\|e^{t\mathscr{A}_p}\|_{L^p(\mathbb{R}^d)\to L^p(\mathbb{R}^d)} \le e^{-\underline{b}t}, 
\quad \underline{b} := \essinf_{x\in\mathbb{R}^d} b(x),
\quad t\ge 0,\quad 1\le p\le \infty.
\end{equation} 
The semigroups are strongly continuous and analytic when $1\le p<\infty$ 
(see e.g. \cite{Gui1993} for the case of $L^1(\mathbb{R}^d)$, $d\ge 2$). Further,
$\bigr(\mathbb{C}\setminus \bar{\mathbb{R}}_-\bigr) \subset \rho(\mathscr{A}_p) = \rho(\mathscr{A}_q)$, 
$1\le p,q <\infty$, and the semigroups are compatible in the sense that
\begin{align}
\label{eq3.2b}
& e^{z\mathscr{A}_{p}} u = e^{z\mathscr{A}_{q}} u, \;
u \in L^p(\mathbb{R}^d)\cap L^q(\mathbb{R}^d),\; \Rea z>0,\; 1\le p,q\le\infty,\\
\label{eq3.2c}
& \mathscr{A}_{p} u = \mathscr{A}_{q} u, \;
u \in D(\mathscr{A}_{p} )\cap D(\mathscr{A}_{q} ),\; 1\le p,q<\infty.
\end{align}
When $1<p<\infty$, the domains $D(\mathscr{A}_p)$ are independent of 
the diffusion coefficient $a$ (see e.g. \cite[Section 3.1.1, pp.~72--75]{Lun1995})
and are given by 
\begin{equation}\label{eq3.2d}
D(\mathscr{A}_p) = W^{2,p}(\mathbb{R}^d),\quad 1<p<\infty.
\end{equation}
The case of $p=1$ is different, here we have
\begin{equation}\label{eq3.2e}
D(\mathscr{A}_1) := \bigl\{ u\in L^1(\mathbb{R}^d)\,\bigl|\,
\mathscr{A}_0 u \in L^1(\mathbb{R}^d),\,
u\in W^{1,q}(\mathbb{R}^d),\, 1\le q<\tfrac{d}{d-1}
\bigr\},
\end{equation}
where $\mathscr{A}_0 u$ is understood in the sense of distributions, 
see e.g. \cite[Theorem 5.8, p. 213-215]{Tan1996}.

Analyticity of $\bigl\{e^{t\mathscr{A}_{p}}\bigr\}_{t\ge 0}$, together with 
\eqref{eq3.2d}, \eqref{eq3.2e} and the standard Sobolev embedding 
\cite[Theorem 4.12, pp. 85--86]{AdFo2003}, implies that 
\begin{align}
\label{eq3.2f}
&e^{z\mathscr{A}_{1}} \in \mathcal{L}\bigl(L^1(\mathbb{R}^d), L^\infty(\mathbb{R}^d)\bigr),
\quad \Rea z>0.
\end{align}
By the classical result of analysis and by \eqref{eq3.2b}, 
each $e^{z\mathscr{A}_{p}}$, with $1\le p\le \infty$ and $\Rea z>0$ 
is an integral operator, i.e. 
\begin{align}
\label{eq3.2g}
&e^{z\mathscr{A}_{p}}[u](x) = \int_{\mathbb{R}^d} G_z(x,y; a, b) u(y)dy,\\
\label{eq3.2h}
& \bigl\|G_z(\cdot,\cdot; a,b)\bigr\|_{L^\infty(\mathbb{R}^{2d})}\le 
\|e^{z\mathscr{A}_1}\|_{L^1(\mathbb{R}^d)\to L^\infty(\mathbb{R}^d)}.
\end{align}
The kernel $(x,y)\mapsto G_z(x,y; a,b)$ is H\"older continuous, see e.g. \cite[Theorem 5.7, p. 210]{Tan1996} 
for concrete details. Further, $z\mapsto G_z(\cdot,\cdot;a,b)$, 
viewed as an $L^\infty(\mathbb{R}^{2d})$-valued map, is analytic for $\Rea z>0$. 
Finally, in view of the identity 
$\bigl(e^{z\mathscr{A}_p}\bigr)' = e^{\bar{z}\mathscr{A}_{p'}}$, $1< p< \infty$, the kernel
is Hermitian, i.e. 
\begin{equation}\label{eq3.2i}
\overline{G_z(x,y; a,b)} = G_{\bar{z}}(y,x; a,b),\quad x,y\in\mathbb{R}^d, \quad \Rea z>0.
\end{equation}
\end{subequations} 

Green's functions estimates for general, uniformly elliptic operators are very 
well documented in the literature, see e.g. \cite[Theorem 5.7, p. 210]{Tan1996}, 
though constants appearing in these estimates are hard to characterize explicitly 
(e.g. they depend on the best constants in the Agmon-Douglis-Nirenberg a priori estimates). 
In the special case of the self-adjoint, second-order uniformly elliptic operators considered here, 
rather sharp and explicit bounds are available in \cite[Section 3.2, pp. 83--91]{Dav1989}. 
To be specific, the following holds 
\begin{lemma}\label{lm3.1}
Assume $a\in W_+^{1,\infty}(\mathbb{R}^d; \mathbb{R}^{d\times d})$,
$b\in L^\infty_+(\mathbb{R}^d)$ and $\Rea z>0$. Then
\begin{align}\label{eq3.3}
&|G_z(x,y; a,b)| 
\le G_{\Rea z}(|x-y|; a, b):= c_{d} \bigl(\tfrac{\kappa(a)}{|a| \Rea z}\bigr)^{\frac{d}{2}}
e^{-\underline{b} \Rea z - \frac{|x-y|^2}{16 |a| \Rea z}},
\end{align}
where $x,y\in\mathbb{R}^d$ and $c_{d}\ge 1$ is an absolute constant that depends 
on the dimension $d\ge 1$ only. 
Furthermore, $G_t(\cdot,\cdot; a,b)$ is non-negative for $t>0$.
\end{lemma}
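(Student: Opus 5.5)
We would first strip off the absorption term and then appeal to Davies' complex-time Gaussian bounds. Positivity of $G_t$ for real $t>0$ follows at once from positivity of the semigroups $e^{t\mathscr{A}_p}$ (Beurling--Deny criteria, \cite[Theorems 1.4.1--1.4.2]{Dav1989}) together with continuity of the kernel. For the bound, write $b=\underline{b}+(b-\underline{b})$ with $b-\underline{b}\ge 0$. Let $\mathscr{A}^0$ denote the realization of $\nabla^T a\nabla$, i.e. the case $b\equiv 0$, with kernel $G^0_z$.

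The first step is to establish the domination
\[
|e^{z\mathscr{A}}u|\le e^{-\underline{b}\Rea z}\,e^{\Rea z\,\mathscr{A}^0}|u|,\qquad \Rea z>0,
\]
which yields $|G_z(x,y;a,b)|\le e^{-\underline{b}\Rea z}G^0_{\Rea z}(x,y)$. For real $z=t$ this is the Trotter/Feynman--Kac comparison: the potential $b-\underline{b}\ge0$ only decreases the positive kernel. For complex $z$ it does not follow directly. Instead, we would work in Davies' framework and use the bound $|G^0_z(x,y)|\le c\,(\Rea z)^{-d/2}\ldots$ only after reducing to real time. The cleanest route is Davies' perturbation (exponential twisting) method. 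Consider $e^{\psi}e^{z\mathscr{A}}e^{-\psi}$ with $\psi(x)=\lambda\cdot x$. Because $b$ is real, nonnegative and bounded, the quadratic form estimate
\[
\Rea\langle -(\mathscr{A}+\underline b)_\psi f,f\rangle \ge -|a||\lambda|^2\|f\|^2
\]
persists. This gives $\|e^{\psi}e^{z\mathscr{A}}e^{-\psi}\|_{2\to2}\le e^{-\underline b\Rea z+|a||\lambda|^2\Rea z}$ on the sector where the form is sectorial. Strictly, for complex time one needs a sector bound controlled by $\kappa(a)$, which is where the prefactor $\kappa(a)^{d/2}$ will enter.

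The second step is the ultracontractivity estimate $\|e^{t\mathscr{A}^0_\psi}\|_{1\to\infty}\le c_d(\underline\lambda(a) t)^{-d/2}e^{c|a||\lambda|^2t}$. We obtain it by the Nash inequality applied to the form $\int\nabla f^Ta\nabla f\ge\underline\lambda(a)\|\nabla f\|^2$, following \cite[Section 3.2]{Dav1989}. Since $(\underline\lambda(a))^{-d/2}=(\kappa(a)/|a|)^{d/2}$, this produces exactly the prefactor in \eqref{eq3.3}.

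To pass to complex $z$, we split $z=t/2+(z-t/2)+\ldots$, or more simply factor $e^{z\mathscr{A}}=e^{\epsilon\mathscr{A}}e^{(z-2\epsilon)\mathscr{A}}e^{\epsilon\mathscr{A}}$ with $\epsilon=\Rea z/4$. This uses $L^1\to L^2$ and $L^2\to L^\infty$ bounds at real times, together with the $L^2$ twisted bound at complex time $z-2\epsilon$, whose real part is $\Rea z/2$. The complex middle factor requires the twisted form to remain accretive along the ray of $z-2\epsilon$. That holds with growth $e^{|a||\lambda|^2|z|}$, which is bad when $|z|\gg\Rea z$.

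The main obstacle is therefore getting a Gaussian in $\Rea z$ rather than in $|z|$. The form in \eqref{eq3.3} with $16|a|\Rea z$ suggests the intended argument is cruder. We first bound $|G_z(x,y)|$ uniformly by $\|e^{(\Rea z/2)\mathscr A}\|_{1\to2}\|e^{(\Rea z/2)\mathscr A}\|_{2\to\infty}$, using $L^2$-contractivity of $e^{i\,\mathrm{Im} z\,\mathscr{A}}$ (self-adjointness). This gives the on-diagonal part, with $e^{-\underline b\Rea z}$ coming from $\mathscr{A}\le-\underline b$. For the off-diagonal decay we would use $G_z(x,y)=\int G_{\Rea z/2}(x,w)\,G_{i\mathrm{Im}z+\Rea z/2}(w,y)\,dw$ combined with the twisted $L^2$ estimate and optimizing over $\lambda$. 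Should that not deliver $\Rea z$ in the exponent, we would fall back on Davies' Phragmén--Lindelöf argument in \cite[Theorem 3.4.8]{Dav1989} and accept the loss of constants, absorbed into $16$ and $c_d$.
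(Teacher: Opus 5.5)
For real times your plan is the paper's own argument. The paper uses Nash's inequality with $\underline{\lambda}(a)=|a|/\kappa(a)$ to get the $L^1\to L^2$ bound, and duality for the $L^2\to L^\infty$ bound. It then applies Davies' exponential twisting and optimizes in the twist parameter. Finally it uses the comparison $G_t(\cdot,\cdot;a,b)\le e^{-\underline{b}t}G_t(\cdot,\cdot;a,0)$ and rescales in $|a|$. Positivity comes from the same source as in the paper. Together with your on-diagonal argument for complex $z$ (pulling out the $L^2$-unitary factor $e^{i\,\mathrm{Im}\,z\,\mathscr{A}}$), this correctly gives two things. First, \eqref{eq3.3} holds for $z=t>0$. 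Second, the non-Gaussian bound $|G_z(x,y;a,b)|\le c_d\bigl(\tfrac{\kappa(a)}{|a|\Rea z}\bigr)^{d/2}e^{-\underline{b}\Rea z}$ holds for all $\Rea z>0$.

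The gap is the last step, and it is not a matter of constants. Davies' Theorem~3.4.8 in \cite{Dav1989} is exactly what the paper invokes to finish. It converts the real-time Gaussian into a factor $\exp\bigl(-\Rea\tfrac{|x-y|^2}{c|a|z}\bigr)=\exp\bigl(-\tfrac{|x-y|^2\Rea z}{c|a||z|^2}\bigr)$. This exponent is smaller than $|x-y|^2/(16|a|\Rea z)$ by the factor $16(\Rea z)^2/(c|z|^2)$, which tends to $0$ near the imaginary axis and so cannot be absorbed into $16$ or $c_d$.

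Your factorization route hits the same wall. For $\psi(x)=\lambda\cdot x$, the Fourier transform shows $\|e^{\psi}e^{w\Delta}e^{-\psi}\|_{2\to2}=e^{|\lambda|^2|w|^2/\Rea w}$, not $e^{|\lambda|^2|w|}$. Optimizing in $\lambda$ then reproduces the $\Rea(1/z)$ Gaussian.

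In fact no argument can give \eqref{eq3.3} on the whole half-plane. Take $a=I_d$ and $b=0$ (or any constant $b$). Then $|G_z(x,y)|=(4\pi|z|)^{-d/2}\exp\bigl(-\tfrac{|x-y|^2\Rea z}{4|z|^2}\bigr)$. For $z=\epsilon+i$ with $0<\epsilon<1/\sqrt{3}$, this decays like $e^{-\epsilon|x-y|^2/(4(1+\epsilon^2))}$. That is strictly slower than the right-hand side $c_d\epsilon^{-d/2}e^{-|x-y|^2/(16\epsilon)}$, so the inequality fails for large $|x-y|$. The same example shows that your first displayed domination $|e^{z\mathscr{A}}u|\le e^{-\underline{b}\Rea z}e^{\Rea z\,\mathscr{A}^0}|u|$ is false for non-real $z$, so you were right to abandon it.

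What your plan, completed with Theorem~3.4.8, actually proves is the following. It gives \eqref{eq3.3} for real $z=t>0$. For complex $z$, it gives the bound with $\tfrac{|x-y|^2}{16|a|\Rea z}$ replaced by $\Rea\tfrac{|x-y|^2}{c|a|z}$. On a closed sector $|\arg z|\le\theta_0<\pi/2$ one has $\Rea(1/z)\ge\cos^2\theta_0/\Rea z$. So \eqref{eq3.3} holds there with $16$ replaced by a constant depending on $\theta_0$.

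The paper's proof has the same defect at the same place. You should therefore state the lemma in this corrected, sectorial form. That is all the later analyticity arguments use; for instance, part (c) of the proof of Theorem~\ref{lm3.3}(i.a) works with a fixed $\theta_0<\pi/2$.
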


Bound \eqref{eq3.3} is standard. However, its explicit dependence on the 
coefficients $a$ and $b$ plays a central role in our analysis of the vector-valued diffusion.
For that reason and for the sake of completeness, we sketch its proof below. 

\begin{proof}
(a) The arguments are essentially those of \cite[Chapters 2--3]{Dav1989}. 
We assume initially that $|a| = 1$ and $b=0$. 
For a given $u_0\in C_0^\infty(\mathbb{R}^d)^+$, $u_0\ne 0$, let $u(t) = e^{t\mathscr{A}_{1}} u_0$, 
$t\ge0$. Since $\|u(t)\|^{2}_{L^{2}(\mathbb{R}^d)}$ is differentiable in $\mathbb{R}_+$,
using the divergence theorem, followed by the Cauchy-Schwarz and Young's inequalities, 
after some simplifications we infer
\begin{align*}
\tfrac{d}{dt}\| u(t) \|_{L^{2}(\mathbb{R}^d)}^{2}
&= 2\bigl\langle u(t), \mathscr{A}_{1} u(t) \bigr\rangle
\le 
-\tfrac{2}{\kappa(a)}  \|\nabla u(t)\|^2_{L^2(\mathbb{R}^d)}.
\end{align*}
Since $u_0\ne 0$, uniqueness of the semigroup solutions 
ensures that neither $\|u(t)\|_{L^{2}(\mathbb{R}^d)}$ nor $\|u(t)\|_{L^{1}(\mathbb{R}^d)}$ 
vanish when $t\ge 0$. Therefore, the classical Nash/Gagliardo-Nirenberg inequality 
\[
\|u\|_{L^2(\mathbb{R}^d)} \le c_{d}' \|\nabla u\|_{L^{2}(\mathbb{R}^d)}^{\frac{d}{d+2}}
\|u\|_{L^1(\mathbb{R}^d)}^{\frac{2}{d+2}},\quad 
u\in W^{1,2}(\mathbb{R}^d)\cap L^1(\mathbb{R}^d),
\]
where $c_{d}'>0$ depends on the dimension $d\ge 1$ only, yields the basic bound
\begin{align*}
\tfrac{4}{c_{d}''\kappa(a)}
\|u(t)\|_{L^{1}(\mathbb{R}^d)}^{-\frac{4}{d}}
\le\tfrac{d}{dt} \| u(t) \|_{L^{2}(\mathbb{R}^d)}^{-\frac{4}{d}},\quad t\ge 0,
\end{align*}
with $c_d'' := d(c_d')^{2+\frac{4}{d}}$.
Since $\bigl\{e^{t\mathscr{A}_1}\bigr\}_{t\ge0}$ are contractions,
upon integration we obtain
\begin{align*}
 \|u(t)\|_{L^{2}(\mathbb{R}^d)}^{-\frac{4}{d}} 
&\ge  
\tfrac{4}{c_{d}''\kappa(a)}
\int_0^t \|u(\tau)\|_{L^{1}(\mathbb{R}^d)}^{-\frac{4}{d}} d\tau
\ge \tfrac{4 t}{c_{d}''\kappa(a)} \|u_0\|_{L^1(\mathbb{R}^d)}^{-\frac{4}{d}},
\quad t\ge 0. 
\end{align*}
Consequently, 
\[
\|u(t)\|_{L^{2}(\mathbb{R}^d)} \le  \bigl(\tfrac{c_d''\kappa(a)}{4 t}\bigr)^{\frac{d}{4}} 
 \|u\|_{L^1(\mathbb{R}^d)},\quad t>0
\]
and, since $C_0^\infty(\mathbb{R}^d)$ is dense in $L^1(\mathbb{R}^d)$, we conclude
\begin{subequations}\label{eqA.1}
\begin{equation}\label{eqA.1a}
\|e^{t\mathscr{A}_1}\|_{L^1(\mathbb{R}^d)\to L^2(\mathbb{R}^d)} \le 
\bigl(\tfrac{c_d''\kappa(a)}{4 t}\bigr)^{\frac{d}{4}},\quad t>0.
\end{equation}
As $e^{t\mathscr{A}_2}$ is symmetric, by the standard duality argument
\begin{equation}\label{eqA.1b}
\|e^{\mathscr{A}_2 t}\|_{L^2(\mathbb{R}^d)\to L^\infty(\mathbb{R}^d)} \le 
\bigl(\tfrac{c_d''\kappa(a)}{4 t}\bigr)^{\frac{d}{4}},\quad t>0.
\end{equation}
\end{subequations}
In connection with \eqref{eqA.1}, we note that the best value of $c_d'$ is available 
e.g. in \cite{CarLos1993}.

(b) We fix $\varphi\in C_b^\infty(\mathbb{R}^d)$, with $\|\nabla\varphi\|_{L^\infty(\mathbb{R}^d)}\le 1$.
In view of \eqref{eqA.1}, we may repeat verbatim the arguments of 
Theorem~2.2.3, pp. 64--66, 
Lemma~3.2.1, pp. 83--84, 
Corollary~3.2.2, pp. 84--85 and 
Theorem~3.2.4, pp. 87 from \cite{Dav1989} (in that order), to obtain
\begin{equation}\label{eqA.2}
\|e^{\mu\varphi}e^{t\mathscr{A}_{1} }e^{-\mu\varphi}\|_{L^1(\mathbb{R}^d)\to L^\infty(\mathbb{R}^d)} \le 
\bigl(\tfrac{c_d''\kappa(a)}{2 t}\bigr)^{\frac{d}{2}} e^{2\mu^2 t},\quad t>0,\quad \mu\in\mathbb{R}.
\end{equation} 
As the space $\mathcal{L}\bigl(L^1(\mathbb{R}^d), L^\infty(\mathbb{R}^d)\bigr)$ is in one-to-one 
correspondence with the space of integral operators of the form \eqref{eq3.2g} satisfying \eqref{eq3.2h}, 
it follows from \eqref{eqA.1} and \eqref{eqA.2} that 
\begin{align*}
0\le |G_t(x,y;a,0)| \le \bigl(\tfrac{c_d''\kappa(a)}{2t} \bigr)^{\frac{d}{2}}
e^{2\mu^2 t  + \mu (\varphi(y) - \varphi(x))},\quad t>0,\quad \mu\in\mathbb{R}. 
\end{align*}
Since $\mu\in\mathbb{R}^d$ and $\varphi\in C_b^\infty(\mathbb{R}^d)$ were arbitrary, 
we let $\mu = \tfrac{(\varphi(x) - \varphi(y))}{4t}$ and 
$\varphi(z) = \tfrac{(x-y)\cdot z}{|x-y|}$, for $|z|\le 2(|x|\vee|y)|$, to obtain
\begin{subequations}\label{eqA.3}
\begin{equation}\label{eqA.3a}
0\le |G_t(x,y; a,0)|\le  \bigl(\tfrac{c_d''\kappa(a)}{2t} \bigr)^{\frac{d}{2}} 
e^{-\frac{|x-y|^2}{8t}},\quad t>0,\quad x,y\in\mathbb{R}^d,
\end{equation}
in the special case of $|a| = 1$ and $b=0$. The general bound
\begin{equation}\label{eqA.3b}
0\le |G_t(x,y; a,b)|\le  \bigl(\tfrac{c_d''\kappa(a)}{2|a|t} \bigr)^{\frac{d}{2}} 
e^{-\frac{|x-y|^2}{8|a|t}},\quad t>0,\quad x,y\in\mathbb{R}^d,
\end{equation}
\end{subequations}
follows from \eqref{eqA.3a}:
(i) by observing that $G_t(x,y; a,b) \le e^{ - \underline{b}t}G_t(x,y; a,0)$, $t\ge 0$;
and (ii) by substituting $\tau = |a| t$. 
Finally, when $\Rea z>0$, the direct application of \cite[Theorem 3.4.8, pp. 103--104]{Dav1989} 
to \eqref{eqA.3b} yields \eqref{eq3.3}.
The proof is complete.
\end{proof}

We remark that \eqref{eq3.3}, together with the convolution Young
inequality, yields the bound
\begin{equation}\label{eq3.4}
 \|e^{z\mathscr{A}_p}\|_{L^p(\mathbb{R}^d)\to L^q(\mathbb{R}^d)}
\le c_{d} 
\tfrac{\kappa(a)^{\frac{d}{2}} e^{-\underline{b} \Rea z}}{(|a|\Rea z)^{\frac{d}{2}(\frac{1}{p}-\frac{1}{q})}},
\quad 1\le p\le q\le \infty,\quad \Rea z >0,
\end{equation}
where as before $c_d\ge 1$ is an absolute constant that depends on the dimension $d\ge 1$ only.

In addition to Lemma~\ref{lm3.1}, we need the following technical 
\begin{lemma}\label{lm3.2}
For $a\in W_+^{1,\infty}(\mathbb{R}^d; \mathbb{R}^{d\times d})$, 
$b\in L^\infty_+(\mathbb{R}^d)$ and $\Rea z>0$:
\begin{itemize}
\item[(i)] the map $(a,b) \mapsto e^{z\mathscr{A}_p}$ is continuous in the uniform operator topology 
from $W^{1,\infty}_+(\mathbb{R}^d; \mathbb{R}^{d\times d})\times L^\infty_+(\mathbb{R}^d)$ to 
$\mathcal{L}\bigl(L^p(\mathbb{R}^{d})\bigr)$, $1<p<\infty$;
\item[(ii)] the map $(a,b) \mapsto e^{z\mathscr{A}_p}$ is continuous in the strong operator topology 
from $W^{1,\infty}_+(\mathbb{R}^d; \mathbb{R}^{d\times d})\times L^\infty_+(\mathbb{R}^d)$ to 
$\mathcal{L}\bigl(L^p(\mathbb{R}^{d}), L^q(\mathbb{R}^{d})\bigr)$, $1\le p\le q <\infty$.
\end{itemize}
\end{lemma}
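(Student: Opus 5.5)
The plan is to compare two semigroups through a Duhamel formula in which the difference of the generators is split symmetrically between the two factors. Fix $(a,b)$ and a nearby pair $(\tilde a,\tilde b)$ in $W^{1,\infty}_+(\mathbb{R}^d;\mathbb{R}^{d\times d})\times L^\infty_+(\mathbb{R}^d)$. Denote by $\mathscr{A}_p$ and $\tilde{\mathscr{A}}_p$ the corresponding generators, and put $\varepsilon:=\|a-\tilde a\|_{W^{1,\infty}}+\|b-\tilde b\|_{L^\infty}$. For $\tilde a$ close to $a$ we have $\underline{\lambda}(\tilde a)\ge \underline{\lambda}(a)/2$ and $|\tilde a|\le 2|a|$. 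Hence every constant in Lemma~\ref{lm3.1}, in \eqref{eq3.4}, in the analyticity sector and in the $L^p$ maximal-regularity estimates behind \eqref{eq3.2d} is uniform over this neighbourhood. This uniformity is used throughout.

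\emph{Part (i), $1<p<\infty$.} By analytic continuation it suffices to treat $z$ in a closed subsector of the sector of analyticity. The starting point is the Duhamel identity
\[
e^{z\mathscr{A}_p}-e^{z\tilde{\mathscr{A}}_p}=\int_0^1 z\, e^{(1-\tau)z\mathscr{A}_p}\bigl[\nabla^T(a-\tilde a)\nabla-(b-\tilde b)\bigr]e^{\tau z\tilde{\mathscr{A}}_p}\,d\tau .
\]
Both semigroups leave $W^{2,p}(\mathbb{R}^d)$ invariant, which justifies the identity. A direct bound $\|(\mathscr{A}_p-\tilde{\mathscr{A}}_p)e^{sz\tilde{\mathscr{A}}_p}\|\lesssim \varepsilon/s$ is not integrable at $s=0$. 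I therefore move one gradient onto each factor: by duality, $e^{(1-\tau)z\mathscr{A}_p}\nabla^T=-(\nabla e^{(1-\tau)\bar z\mathscr{A}_{p'}})'$.

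The key auxiliary estimate is
\[
\|\nabla e^{s\mathscr{A}_p}\|_{L^p\to L^p}\le C s^{-1/2},\qquad s\in(0,1],\quad 1<p<\infty,
\]
with $C$ uniform over the neighbourhood. It follows from analyticity, which gives $\|\mathscr{A}_pe^{s\mathscr{A}_p}\|\le C/s$, combined with \eqref{eq3.2d}, uniform $W^{2,p}$ a priori bounds, and the interpolation inequality $\|\nabla u\|_p\le C\|u\|_p^{1/2}\|u\|_{W^{2,p}}^{1/2}$. With this estimate the integrand is bounded by $C\varepsilon\,[(1-\tau)\tau]^{-1/2}$, which is integrable. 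Hence $\|e^{z\mathscr{A}_p}-e^{z\tilde{\mathscr{A}}_p}\|_{L^p\to L^p}\le C\varepsilon$. The main technical point, and the expected obstacle, is checking that the $W^{2,p}$ estimates are locally uniform in $(a,b)$. I would get this from the standard freezing-of-coefficients proof, whose constants depend only on $\underline{\lambda}(a)$, $\|a\|_{W^{1,\infty}}$ and $\|b\|_\infty$.

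\emph{Part (ii).} By \eqref{eq3.4} the operators $e^{z\tilde{\mathscr{A}}}:L^p\to L^q$ are bounded uniformly over the neighbourhood. So strong convergence needs to be checked only on the dense set $C_c^\infty(\mathbb{R}^d)$.

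First take $u\in C_c^\infty(\mathbb{R}^d)$ and an exponent $r\in(1,\infty)$. By (i) and compatibility \eqref{eq3.2b}, $e^{z\tilde{\mathscr{A}}}u\to e^{z\mathscr{A}}u$ in $L^r$. When $p=q=1$, convergence in $L^1$ follows from two facts. On compact sets, $L^r$-convergence controls the $L^1$-norm. Outside a large ball, the Gaussian bound \eqref{eq3.3} with uniform constants makes all tails small, uniformly in the neighbourhood.

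For $p<q$, I would factor $e^{z\tilde{\mathscr{A}}}=e^{\frac z2\tilde{\mathscr{A}}}e^{\frac z2\tilde{\mathscr{A}}}$ and write
\[
T_nS_nu-TSu=T_n(S_nu-Su)+(T_n-T)Su .
\]
Here $S_n\to S$ strongly in $L^p$ by the previous step. The operators $T_n:L^p\to L^q$ are uniformly bounded, and they converge strongly because they converge on the dense set, by the same $L^r$ plus Gaussian-tail argument. This yields continuity in the strong operator topology from $L^p$ to $L^q$ for all $1\le p\le q<\infty$.
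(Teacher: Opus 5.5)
Your argument is correct, but part (i) takes a different route from the paper's. The paper works with resolvents. The $W^{2,p}$ estimate for the \emph{fixed} operator $\mathscr{A}_p$ makes $\mathscr{A}_p-\mathscr{B}_p$ relatively bounded, with bound $\lesssim\|a-\alpha\|_{W^{1,\infty}}+\|b-\beta\|_{L^\infty}$. The second resolvent identity then gives a resolvent difference of order $\varepsilon/|\lambda|$, and the Dunford integral turns this into \eqref{eq3.5}. You instead use Duhamel's formula and split the divergence-form perturbation symmetrically, putting one derivative on each semigroup via duality and the bound $\|\nabla e^{s\mathscr{A}_p}\|\lesssim s^{-1/2}$.

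The trade-off is the one you anticipated. The paper needs elliptic regularity only at the base point; the nearby operators enter only through their sectoriality constants, which the uniform Gaussian bounds control. You need gradient (hence $W^{2,p}$) bounds uniformly on a neighbourhood. In return, you never differentiate $a-\tilde a$, so you obtain Lipschitz dependence on $\|a-\tilde a\|_{L^\infty}+\|b-\tilde b\|_{L^\infty}$ on $W^{1,\infty}$-bounded sets.

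In part (ii), both proofs combine (i) with the uniform bounds \eqref{eq3.4}. Keep your explicit treatment of $p=q=1$, via local $L^r$-convergence plus uniform Gaussian tails. The paper's one-line decomposition invokes \eqref{eq3.5} in $L^q$, which is unavailable for $q=1$, yet that is exactly the case used in the proof of Theorem~\ref{lm3.3}(i.a).

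For $p<q$, your factorization $T_nS_n$ is unnecessary. Since then $q>1$, the uniform $L^p\to L^q$ bounds together with convergence on $C_c^\infty(\mathbb{R}^d)$, obtained from (i) in $L^q$, already give strong convergence.
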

\begin{proof}
(a) Let, in addition to $(a,b)$, another pair 
$(\alpha, \beta)\in  W^{1,\infty}_+(\mathbb{R}^d; \mathbb{R}^{d\times d})\times  
L^\infty_+(\mathbb{R}^d)$ of coefficients 
be fixed and let $\bigl(\mathscr{B}_p, D(\mathscr{B}_p)\bigr)$ be the realization of 
$\mathscr{B}_0 := \nabla^T \alpha\nabla - \beta I$ that generates 
the diffusion semigroup 
$\bigl\{e^{\mathscr{B}_p z}\bigr\}_{\{\Rea z>0\}\cup\{0\}}\subset \mathcal{L}\bigl(L^p(\mathbb{R}^d)\bigr)$,
$1\le p\le \infty$.
 
For $u\in W^{2,p}(\mathbb{R}^d)$, $1<p<\infty$, the domain identity \eqref{eq3.2d} 
and the classical  Agmon-Douglis-Nirenberg 
a priori estimate, see e.g. \cite[Section 4.2, pp. 130--131]{Tan1996}, yields the basic bound
\begin{align*}
\bigl\|[\mathscr{A}_p - \mathscr{B}_p] u\bigr\|_{L^p(\mathbb{R}^d)} 
&\le c\bigl[\|a-\alpha\|_{W^{1,\infty}(\mathbb{R}^d;\mathbb{R}^{d\times d})}
+\|b-\beta\|_{L^{\infty}(\mathbb{R}^d)}\bigr]
\|u\|_{W^{2,p}(\mathbb{R}^d)}\\
&\le c \bigl[\|a-\alpha\|_{W^{1,\infty}(\mathbb{R}^d;\mathbb{R}^{d\times d})}
+\|b-\beta\|_{L^{\infty}(\mathbb{R}^d)}\bigr]
\|\mathscr{A}_pu\|_{L^p(\mathbb{R}^d)},
\end{align*}
with some $c >0$ that depends on $p$, $d$, 
$\|a\|_{W^{1,\infty}(\mathbb{R}^d;\mathbb{R}^{d\times d})}$,  
$\|b\|_{L^{\infty}(\mathbb{R}^d)}$ only.
Hence, for $\lambda\in \rho(\mathscr{A}_p)\cap \rho(\mathscr{B}_p)$, 
\begin{align*}
&\bigl\|(\lambda I - \mathscr{A}_p)^{-1} - (\lambda I  - \mathscr{B}_p)^{-1}\bigr\|_{L^p(\mathbb{R}^d)\to 
L^p(\mathbb{R}^d)} \\
&\qquad\qquad\qquad\qquad
\le \tfrac{c'}{|\lambda|}
\bigl[\|a-\alpha\|_{W^{1,\infty}(\mathbb{R}^d;\mathbb{R}^{d\times d})}
+\|b-\beta\|_{L^{\infty}(\mathbb{R}^d)}\bigr],
\end{align*}
with $c' >0$ controlled by $p$, $d$, $\|a\|_{W^{1,\infty}(\mathbb{R}^d;\mathbb{R}^{d\times d})}$,
$\|\alpha\|_{W^{1,\infty}(\mathbb{R}^d;\mathbb{R}^{d\times d})}$ and 
$\|b\|_{L^{\infty}(\mathbb{R}^d)}$,
$\|\beta\|_{L^{\infty}(\mathbb{R}^d)}$. 
Since $\bigl(\mathscr{A}_p, W^{2,p}(\mathbb{R}^d)\bigr)$ and 
$\bigl(\mathscr{B}_p, W^{2,p}(\mathbb{R}^d)\bigr)$ are 
both sectorial, we can estimate the difference $e^{z\mathscr{A}_p} - e^{z\mathscr{B}_p}$ using 
the Dunford integral representations for the semigroups 
$\bigl\{e^{z\mathscr{A}_p}\bigr\}_{\{\Rea z>0\}\cup\{0\}}$
and $\bigl\{e^{z\mathscr{B}_p}\bigr\}_{\{\Rea z>0\}\cup\{0\}}$. 
Arguing as in e.g. \cite[Proposition 4.3, pp. 97--100]{EnNa2000}, we obtain
\begin{equation}\label{eq3.5}
\|e^{z\mathscr{A}_p} - e^{z\mathscr{B}_p}\|_{L^p(\mathbb{R}^d)\to L^p(\mathbb{R}^d)} 
\le c'' \bigl[\|a-\alpha\|_{W^{1,\infty}(\mathbb{R}^d;\mathbb{R}^{d\times d})}
+\|b-\beta\|_{L^{\infty}(\mathbb{R}^d)}\bigr],
\end{equation}
with some $c''>0$, controlled by the same quantities as $c'$. Hence, the first claim is settled.

(b) Assertion (ii) follows from \eqref{eq3.4} and \eqref{eq3.5}, by writing 
\[
\bigl(e^{z\mathscr{A}_p} - e^{z\mathscr{B}_p}) u = 
\bigl(e^{z\mathscr{A}_p} - e^{z\mathscr{B}_p}) [I-e^{(tz)\mathscr{A}_p}]u 
+ \bigl(e^{z\mathscr{A}_q} - e^{z\mathscr{B}_q})e^{(tz)\mathscr{A}_p}u,
\]
for a suitably chosen $0<t<1$.
\end{proof}

\subsection{The loss-diffusion semigroup}\label{sec3.2} 

We turn now to the vector-valued scenario. For the collection of kernels 
\[
\bigl\{G_z\bigl(\cdot,\cdot;\alpha(\xi), \beta(\xi)\bigr)\bigr\}_{\xi, \Rea z>0},
\]
from Lemma~\ref{lm3.1} and for 
$u\in \mathcal{D}(\mathbb{R}^d\times\mathbb{R}_+)$, we define \emph{formally}
\begin{subequations}\label{eq3.6}
\begin{align}
\label{eq3.6a}
& e^{0\mathcal{T}_0}[u] := u,\\
\label{eq3.6b}
&e^{z\mathcal{T}_0}[u](x,\xi) := 
\int_{\mathbb{R}^d} G_z\bigl(x,y; \alpha(\xi), \beta(\xi)\bigr) u(y,\xi) dy,\\
\label{eq3.6c}
&\mathcal{T}_0[u](x,\xi) := \mathcal{T}_0(\xi)[u(\cdot,\xi)](x) = 
\nabla^T \alpha(x,\xi)\nabla u(x,\xi) - \beta(x, \xi) u(x,\xi).
\end{align}
\end{subequations}
The following is the main result of this Section
\begin{theorem}\label{lm3.3}
Let an a.e. positive weight $w\in L^1_{\text{loc}}(\mathbb{R}_+)$ be fixed. 
Under assumptions \eqref{eq2.1} and \eqref{eq2.2}:
\begin{subequations}\label{eq3.7}
\begin{itemize}
\item[(i)]
The family $\bigl\{e^{z\mathcal{T}_0}\bigr\}_{\{\Rea z>0\}\cup\{0\}}$ of the integral 
maps defined in \eqref{eq3.6} extends to a positive analytic $C_0$-semigroup:
\begin{itemize}
\item[(i.a)] of contractions in $X^1_w$, i.e.
\begin{equation}\label{eq3.7a}
\|e^{t\mathcal{T}_1}\|_{X^1_w
\to X^1_w} \le 1,\quad t\ge 0;
\end{equation}
\item[(i.b)] of growth type zero in  $X^p_w$, 
$1\le p<\infty$, i.e.
\begin{equation}\label{eq3.7b}
\|e^{z\mathcal{T}_p}\|_{X^p_w \to X^p_w} \le C_p,\quad z\in\{\Rea z>0\}\cup\{0\},
\end{equation}
for some $C_p\ge 1$ that depends on $d$, $\kappa(a)$ and $p$ only.
\end{itemize}
\item[(ii)] For each $1\le p<\infty$ and $\Rea z>0$, the semigroups 
actions are given by the same integral formula \eqref{eq3.6b}.
\item[(iii)] If in addition \eqref{eq2.5} holds, the semigroups are hyper-contractive, i.e.
for $1\le p\le q < \infty$, $s\ge 0$ and $\Rea z>0$, we have 
\begin{equation}\label{eq3.7c}
\|e^{z\mathcal{T}_p}\|_{X^p_w \to X^q_{w,s}} \le C_{p,q,s} (\Rea z)^{-s-\frac{d}{2\delta}(\frac{1}{p}-\frac{1}{q})},
\end{equation}
where $C_{p,q,s}>1$ is controlled by $d$, $p$, $q$, $s$ and the quantities 
$\kappa(\alpha)$ and $\kappa_\delta(\alpha,\beta)$ only.
Further, for $\lambda>0$, the resolvents of the generators satisfy
\begin{equation}\label{eq3.7f}
(\lambda-\mathcal{T}_p)^{-1}\in \mathcal{L}\bigl(X^p_w, X^q_{w,s}\bigr),
\end{equation}
provided
\[
0\le s\le 1-\tfrac{d}{2\delta}\bigl(\tfrac{1}{p}-\tfrac{1}{q}\bigr),\quad 1\le p\le q< \infty.
\]
\item[(iv)] The semigroups generators $\bigl(\mathcal{T}_p, D(\mathcal{T}_p)\bigr)$, $1\le p<\infty$, 
are given explicitly by
\begin{align}
\label{eq3.7d}
&\mathcal{T}_p u := \mathcal{T}_0u,\\
\label{eq3.7e}
&D(\mathcal{T}_p) := \Bigl\{u\in X^p_{w},\,\Bigr|\,
\mathcal{T}_0 u, \beta u \in X^p_w\Bigr\},
\end{align}
where \eqref{eq3.7d} is understood in the sense of distributions. 
\end{itemize}
\end{subequations}
\end{theorem}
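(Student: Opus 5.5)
The plan is to treat $e^{z\mathcal{T}_0}$ fibrewise in $\xi$. For a.e. fixed $\xi$ we have $\alpha(\xi)\in W^{1,\infty}_+$ and $\beta(\xi)\in L^\infty_+$, so the scalar theory of Section~\ref{sec3.1} applies with $a=\alpha(\xi)$ and $b=\beta(\xi)$.

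\textbf{Step 1: measurability and boundedness.} I first check that $\xi\mapsto e^{z\mathscr{A}_p(\xi)}u(\cdot,\xi)$ is strongly measurable for $u\in X^p_w$. This follows from Lemma~\ref{lm3.2}(ii): the map $(a,b)\mapsto e^{z\mathscr{A}_p}$ is strongly continuous, and by \eqref{eq2.1a} and \eqref{eq2.2a} the coefficients are strongly measurable, hence a.e. limits of simple functions. Boundedness then comes from \eqref{eq3.3} and \eqref{eq3.4}.
\begin{itemize}
\item[(a)] For $p=1$ and real $t$, positivity and $L^1$-contractivity of each fibre give $\|e^{t\mathcal{T}}u\|_{X^1_w}\le\int w\|u(\xi)\|_{L^1}d\xi$, which is \eqref{eq3.7a}.
\item[(b)] For general $p$, I use Minkowski's integral inequality. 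Write $g_\xi:=G_{\Rea z}(\cdot;\alpha(\xi),\beta(\xi))$, the Gaussian majorant of \eqref{eq3.3}. Then
\[
\bigl\|\textstyle\int |G_z(\cdot,y;\xi)||u(y,\xi)|dy\bigr\|_{L^1(w d\xi)} \le \textstyle\int g_\xi * \|u(\cdot,\xi)\|\ldots
\]
I bound this through a maximal-function argument. The Gaussian $g_\xi$ has $L^1$-norm at most $c_d\kappa(\alpha)^{d/2}$, independent of $|\alpha(\xi)|$ and $\Rea z$ (this is where \eqref{eq2.1b} enters). Since it is radially decreasing, $\int g_\xi(x-y)f(y,\xi)dy\le C\mathcal{M}_0[f(\cdot,\xi)](x)$, but a maximal function applied for each fixed $\xi$ does not commute with the $\xi$-integral.
\end{itemize}

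The cleanest route to \eqref{eq3.7b} is instead the following. For $p\ge1$ and real $t$, positivity gives the pointwise domination $|e^{z\mathcal{T}}u|\le e^{(\Rea z)\mathcal{T}'}|u|$, where $\mathcal{T}'$ is the Gaussian convolution with $g_\xi$. I then bound $\int g_\xi(x-y)\,h(y,\xi)\,d\xi\,dy$ with $h=w|u|$. For fixed $x$ this is $\int_{\mathbb{R}^d}\bigl[\int g_\xi(x-y)h(y,\xi)d\xi\bigr]dy$, and each $g_\xi$ is a superposition of normalized ball indicators with total mass $\le C\kappa^{d/2}$. Hence
\[
\int g_\xi(x-y) h(y,\xi)\,dy\,d\xi\le C\kappa(\alpha)^{d/2}\,\mathcal{M}_0\Bigl[\int h(\cdot,\xi)d\xi\Bigr](x)\ ?
\]
This fails in general, because the radius depends on $\xi$. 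Here, however, the radius-measure for each $\xi$ is a probability measure and the bound $\sup_r$ is uniform, so the pointwise estimate $\int_\xi\bigl(\text{avg}_{B(x,r_\xi)}h(\cdot,\xi)\bigr)d\xi\le\mathcal{M}_0[H](x)$ with $H=\int h\,d\xi$ \emph{does} hold: each average is bounded by the same sup over $r$ of $H$-averages, after interchanging. The $L^p$-boundedness of $\mathcal{M}_0$ for $p>1$ then yields \eqref{eq3.7b}; $p=1$ is (a). I regard this interchange as the main obstacle and would verify it carefully via the layer-cake representation of $g_\xi$. The $C_0$ property follows from density of $\mathcal{D}$ plus fibrewise strong continuity and dominated convergence. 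Analyticity follows from analyticity of each fibre kernel together with the uniform bound on sectors $|\Arg z|<\pi/2-\epsilon$; for this I use the sector version of \eqref{eq3.3} from Davies' Theorem~3.4.8, where $\Rea z$ is comparable to $|z|$. Part (ii) holds by construction.

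\textbf{Step 2: hyper-contractivity (iii).} Using \eqref{eq3.2a}, the fibre kernel carries the factor $e^{-\underline{\beta}(\xi)\Rea z}$. Combining this with the decay $(|\alpha(\xi)|\Rea z)^{-\frac d2(\frac1p-\frac1q)}$, I bound
\[
\underline{\beta}^s e^{-\underline{\beta}t}(|\alpha|t)^{-\frac d2\theta}\le C t^{-s}(\underline{\beta}t)^{-\frac{d\theta}{2\delta}(1-\delta)}\ldots,\qquad \theta:=\tfrac1p-\tfrac1q .
\]
By \eqref{eq2.5}, $|\alpha|^{-1}\le C\underline{\beta}^{(1-\delta)/\delta}$, so the product is at most $Ct^{-s-\frac{d}{2\delta}\theta}$ uniformly in $\xi$. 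The $L^p\to L^q$ passage in $x$ for the vector-valued setting reuses the fractional maximal function $\mathcal{M}_\delta$ and the Hardy--Littlewood--Sobolev boundedness of $\mathcal{M}_{d\theta/d}$ from $L^p$ to $L^q$, in the same way as in Step 1. The resolvent claim \eqref{eq3.7f} follows from the Laplace transform $\int_0^\infty e^{-\lambda t}e^{t\mathcal{T}_p}dt$, which converges in $\mathcal{L}(X^p_w,X^q_{w,s})$ because the exponent $s+\frac d{2\delta}\theta$ is less than $1$; the endpoint equal to $1$ needs an interpolation/moment inequality argument.

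\textbf{Step 3: the generator (iv).} On $\mathcal{D}$ the generator acts fibrewise as $\mathcal{T}_0$, and $\mathcal{D}$ is invariant, so it is a core. To identify the closure with \eqref{eq3.7e}, I show that for $\lambda>0$ and $f\in X^p_w$, the fibrewise resolvent solution $u(\xi)=(\lambda-\mathscr{A}_p(\xi))^{-1}f(\xi)$ satisfies $\beta u,\ \mathcal{T}_0u\in X^p_w$. This uses $\|\beta(\xi)(\lambda-\mathscr{A}(\xi))^{-1}\|\le C_\beta$, obtained from \eqref{eq2.2b} and \eqref{eq3.2a}. Conversely, if $u$ lies in the set \eqref{eq3.7e}, then $u$ equals the resolvent of $(\lambda-\mathcal{T}_0)u$ by uniqueness of the scalar problems \eqref{eq3.2d} and \eqref{eq3.2e} for a.e. $\xi$.
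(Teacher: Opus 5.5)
The step you flag as the main obstacle is false, and your proofs of \eqref{eq3.7b} and \eqref{eq3.7c} for $p>1$ depend on it. When the radius depends on $\xi$, the inequality $\int \mathrm{avg}_{B(x,r_\xi)}h(\cdot,\xi)\,d\xi\le \mathcal{M}_0[H](x)$, with $H=\int h(\cdot,\xi)\,d\xi$, fails by an unbounded factor. Take $r_\xi=2^{-\xi}$ for $\xi\in(0,N)$ and $h(y,\xi)=\chi_{A_\xi}(y)$, where $A_\xi=\{2^{-\xi-1}<|y|<2^{-\xi}\}$. At $x=0$ every average equals $1-2^{-d}$, so the left-hand side is $(1-2^{-d})N$. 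On the other hand $H(y)=|\{\xi\in(0,N):\,y\in A_\xi\}|\le 1$, so $\mathcal{M}_0[H]\le 1$. The same happens, on a ball of positive measure around the origin, for the Gaussian majorants of \eqref{eq3.3} with the admissible choice $\alpha(\xi)=4^{-\xi}I_d$, $\beta\equiv 1$, $\Rea z=1$, $w\equiv 1$.

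So fibrewise bounds do not transfer pointwise in $x$ to $X^p_w=L^p(\mathbb{R}^d;L^1(wd\xi))$. The norm inequality is still true, and the missing idea is the duality the paper uses:
\begin{itemize}
\item[(1)] Pair with a $\xi$-\emph{independent} test function $v\in L^{p'}(\mathbb{R}^d)$.
\item[(2)] Use Fubini to move the symmetric majorant onto $v$.
\item[(3)] The layer-cake bound $\int g_\xi(x-y)|v(y)|\,dy\le c\,\mathcal{M}_0[v](x)$ of Lemma~\ref{lm3.4} then holds uniformly in $\xi$, so it can be integrated against $w|u|$. This gives $c\int\|u(x)\|_{L^1(wd\xi)}\mathcal{M}_0[v](x)\,dx\le C_p\|u\|_{X^p_w}\|v\|_{L^{p'}}$.
\end{itemize}
This is done on $C_c(\mathbb{R}^d;L^1(\mathbb{R}_+,wd\xi))$ with the $p=1$ semigroup, and then extended by density. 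The point is that the maximal operator must act on the factor that does not depend on $\xi$.

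The same defect runs through the rest of your plan.

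In (iii) the correct argument is again dual: $\mathcal{M}_{\frac1p-\frac1q}$ acts from $L^{q'}$ into $L^{p'}$ (Muckenhoupt--Wheeden, or H\"older when $p=1$). Applied on the $u$-side it would fail anyway for $p=1<q$, where the fractional maximal operator is only of weak type. The endpoint $s=1-\frac{d}{2\delta}(\frac1p-\frac1q)$ in \eqref{eq3.7f} cannot be reached from the Laplace transform, since $\int_0 t^{-1}dt$ diverges. The paper gets it from the pointwise bound \eqref{eq3.8c} on the resolvent kernel $K_\lambda$, combined with the same duality.

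In (iv), the fibrewise estimate $\|\beta(\xi)(\lambda-\mathscr{A}_p(\xi))^{-1}\|_{L^p\to L^p}\le C_\beta$ does not give $\beta u\in X^p_w$, for the reason above. Worse, fibres of elements of $X^p_w$ need not lie in $L^p(\mathbb{R}^d)$. In $d=1$, $f(x,\xi)=|x-\xi|^{-1/p}\chi_{\{|x|<2\}}\chi_{(0,1)}(\xi)$ belongs to $X^p$, yet none of its fibres is in $L^p$. So the scalar $L^p$ resolvent and uniqueness theory cannot be applied fibre by fibre. The paper instead uses \eqref{eq3.7f} with $p=q$, $s=1$ (that is, \eqref{eq3.8c} plus duality) for $\beta u$, and a distributional pairing against test functions for the converse inclusion.

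Finally, $\mathcal{D}(\mathbb{R}^d\times\mathbb{R}_+)$ is not invariant under the semigroup. Compact support is lost, and there is no regularity in $\xi$ because the coefficients are only measurable in $\xi$. Your core argument is therefore wrong as stated, although it becomes unnecessary once the resolvent identification is done properly.
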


The proof of Theorem~\ref{lm3.3} is straightforward but lengthy. For the readers convenience, 
we prove each assertion separately. We start with the case of $p=1$. In these settings, 
\[
X^1_w = 
L^1\bigl(\mathbb{R}_+, wd\xi; L^1(\mathbb{R}^d)\bigr) 
= L^1(\mathbb{R}^d\times \mathbb{R}_+ , wd\xi dx),
\]
by the Fubini theorem, which significantly simplifies calculations.

\begin{proof}[Proof of Theorem~\ref{lm3.3}(i.a) and (ii) with $p=1$.] $\,$\\
(a) Let some $z$, with $\Rea z>0$, and $u\in X^1_{w}$ be fixed. 
We define 
\[
\mathfrak{A} := \bigl\{a\in W^{1,\infty}_+(\mathbb{R}^d;\mathbb{R}^{d\times d})\,\bigl|\, \kappa(a)\le 
\kappa(\alpha)\bigr\}.
\]
By \eqref{eq2.1}, \eqref{eq2.2} and the Fubini theorem, the vector valued map
\[
\mathbb{R}_+\ni \xi\mapsto \bigl(\alpha(\xi),\beta(\xi), u(\xi)\bigr) \in 
\mathfrak{A}\times L_+^\infty(\mathbb{R}^d)\times L^1(\mathbb{R}^d)
\] 
is strongly measurable and hence can be realized as the (a.e. in $\mathbb{R}_+$) point-wise limit 
of a sequence of simple functions 
\[
\xi\mapsto \bigl(\alpha_n(\xi),\beta_n(\xi), u_n(\xi)\bigr),\quad n\ge 1.
\] 
As set $\mathfrak{A}$ is closed in $W^{1,\infty}(\mathbb{R}^d;\mathbb{R}^{d\times d})$, 
this sequence can be chosen so as to take its non-zero values in 
$\mathfrak{A}\times L_+^\infty(\mathbb{R}^d)\times L^1(\mathbb{R}^d)$, 
this is easy to verify e.g. by repeating verbatim the proof of the Pettis 
measurability theorem, see e.g. \cite[Theorem 1.1.1, pp. 7--8]{ArBaHiNe2011}. 
On the account of Lemmas~\ref{lm3.1} and \ref{lm3.2}(b), the map 
\[
\mathfrak{A}\times L_+^\infty(\mathbb{R}^d)\times L^1(\mathbb{R}^d)\ni (a,b,u) 
\mapsto  
 \int_{\mathbb{R}^d} G_z(\cdot,y; a,b) u(y)dy \in L^1(\mathbb{R}^d)
\] 
is continuous. Using this fact, it is not difficult to verify that the $L^1(\mathbb{R}^d)$-valued 
sequence of simple functions
\[
\xi\mapsto
\left\{
\begin{array}{ll}
\int_{\mathbb{R}^d} G_z\bigl(\cdot,y; \alpha_n(\xi),\beta_n(\xi)\bigr) u_n(y,\xi)dy,& \alpha_n(\xi) \ne 0,\\
e^{-\beta_n(\xi) z} u_n(\xi),& \alpha_n(\xi) = 0,
\end{array}
\right.
\] 
is well defined and converges a.e. in $\mathbb{R}_+$ to 
\[
[S_{\alpha,\beta}(z)u](\xi) := e^{z\mathcal{T}_1(\xi)} u(\xi)
= \int_{\mathbb{R}^d} G_z(\cdot,y; \alpha(\xi),\beta(\xi)) u(y,\xi)dy.
\]
We conclude that the 
$L^1(\mathbb{R}^d)$-valued map $\xi\mapsto [S_{\alpha,\beta}(z)u](\xi)$ 
is strongly measurable in $\mathbb{R}_+$ and, subsequently, $S_{\alpha,\beta}(z)$, 
viewed as an operator-valued map from $\mathbb{R}_+$ to $\mathcal{L}\bigl(L^1\bigl(\mathbb{R}^d)\bigr)$,
is strongly measurable in the strong operator topology.

(b) In view of \eqref{eq3.3} and \eqref{eq3.2a}, we have 
\begin{align*}
&\bigl\|S_{\alpha,\beta}(z) u \bigr\|_{X^1_w} 
\le C_1\|u\|_{X^1_w},\quad \Rea z>0,\\
&\|S_{\alpha,\beta}(t) u \|_{X^1_w} 
\le \|u\|_{X^1_w},\quad t\ge 0.
\end{align*}
By the definition of $S_{\alpha,\beta}(z)$, 
\[
\bigl\|[S_{\alpha,\beta}(z)u](\xi) - u(\xi)\bigr\|_{L^1(\mathbb{R}^d)}\to 0\quad\text{ as $z\to 0$, $\Rea z>0$},
\] 
for a.a. $\xi \in \mathbb{R}_+$.
Hence, $S_{\alpha,\beta}(z)u\to u$ as $z\to 0$, $\Rea z>0$, 
in $X^1_w$ by the dominated convergence theorem. 
Since the identity 
$S_{\alpha,\beta}(z')S_{\alpha,\beta}(z'') u = S_{\alpha,\beta}(z'+z'')u$, $\Rea z',\Rea z''>0$, 
is obvious, we conclude that 
\[
\bigl\{e^{z\mathcal{T}_1}\bigr\}_{\{\Rea z>0\}\cup\{0\}}:=
\bigl\{S_{\alpha,\beta}(z)\bigr\}_{\{\Rea z>0\}\cup\{0\}}
\] 
is a $C_0$-semigroup of contractions in $X^1_w$. 
The positivity of $\bigl\{e^{t\mathcal{T}_1}\bigr\}_{t\ge0}$, follows from the positivity of 
$\bigl\{e^{t\mathcal{T}_1(\xi)}\bigr\}_{t\ge 0}$ for a.a. $\xi\in\mathbb{R}_+$.

(c) The preceding calculations and \eqref{eq3.3} indicate that 
$\bigl\{e^{te^{i\theta_0}\mathcal{T}_1}\bigr\}_{t\ge 0}$ is a $C_0$-semigroup of 
growth type zero in $\mathcal{L}\bigl(X^1_w\bigr)$
for every fixed value $0\le \theta_0<\tfrac{\pi}{2}$.
Consequently, choosing $r>0$ and some $0< \theta_0<\tfrac{\pi}{2}$, on the account 
of \eqref{eq3.6b}, \eqref{eq3.3} and the Fubini theorem, we obtain
\begin{align*}
\bigl\|(ir-\mathcal{T}_1)^{-1}u\bigr\|_{X^1_w}
&= \bigl\|e^{-i\theta_0} (re^{i(\frac{\pi}{2}-\theta_0)} - e^{-i\theta_0}\mathcal{T}_1)^{-1}
u\bigr\|_{X^1_w} \\
&
\le \int_{\mathbb{R}_+} e^{-rt\sin\theta_0} 
\|e^{te^{-i\theta_0}\mathcal{T}_1}u\|_{X^1_w}dt
\le \tfrac{c}{r}\|u\|_{X^1_w},
\end{align*}
where $c\ge 1$ depends on $d$, $\kappa(a)$ and $\theta_0$ only. 
The last bound is equivalent to the analyticity of  
$\bigl\{e^{z\mathcal{T}_1}\bigr\}_{\{\Rea z>0\}\cup\{0\}}$.

(d) To conclude the proof, we note that by the construction of 
$e^{z\mathcal{T}_1}$ and by the Fubini theorem, the 
$L^1(\mathbb{R}_+, wd\xi)$-valued map $x\to [e^{z\mathcal{T}_1}u](x)$ is Bochner integrable 
and agrees a.e. in $\mathbb{R}^d$ with the formal integral expression \eqref{eq3.6b} 
for every fixed $u\in X^1_w$ and $\Rea z>0$. The proof is complete.
\end{proof}

Proofs of assertions (i.b), (ii) and (iii) rely on the following technical observation:
\begin{lemma}\label{lm3.4}
Assume \eqref{eq2.1} and \eqref{eq2.2} are satisfied. If $u\in L^1_{\text{loc}}(\mathbb{R}^d)$, then
\begin{subequations}\label{eq3.8}
\begin{equation}\label{eq3.8a}
e^{\underline{\beta}(\xi)\Rea z}\bigl|e^{z\mathcal{T}_1(\xi)}[u](x)\bigr| 
\le c_{0,0} \mathcal{M}_{0}[u](x),\; z\in \{\Rea z>0\}\cup\{0\},
\end{equation}
for a.a. $\xi\in\mathbb{R}_+$ and $x\in\mathbb{R}^d$.
In addition, if \eqref{eq2.5} holds, then also 
\begin{align}
\label{eq3.8b}
&\bigl|\underline{\beta}(\xi)^s e^{z\mathcal{T}_1(\xi)}[u](x)\bigr| 
\le c_{s,\delta'} (\Rea z)^{-s-\frac{d\delta'}{2\delta}} \mathcal{M}_{\delta'}[u](x),\; 0\le \delta'<1, 
\; s\ge 0,\; \Rea z>0,\\
\label{eq3.8c}
&\bigl|\underline{\beta}(\xi)^s (\lambda-\mathcal{T}_1(\xi))^{-1}[u](x)\bigr| 
\le c_{s} \mathcal{M}_{\frac{(1-s)2\delta}{d}}[u](x),\; 0\le s\le 1,\; 1-\tfrac{d}{2\delta}<s,\; \lambda>0,
\end{align}
\end{subequations}
for a.a. $\xi\in\mathbb{R}_+$ and $x\in\mathbb{R}^d$.
The constants $c_{s,\delta'}, c_{s}>0$ are controlled by 
$d$, $\delta$, $\delta'$, $s$ and the quantities 
$\kappa(\alpha)$ and $\kappa_\delta(\alpha,\beta)$ only. 
\end{lemma}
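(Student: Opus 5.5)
The plan is to reduce all three estimates to pointwise kernel bounds from Lemma~\ref{lm3.1}, applied for a.a. fixed $\xi$ with $a=\alpha(\xi)$ and $b=\beta(\xi)$. Then $|e^{z\mathcal{T}_1(\xi)}[u](x)|\le \int G_{\Rea z}(|x-y|;\alpha(\xi),\beta(\xi))|u(y)|dy$. Since $\underline{b}\ge\underline{\beta}(\xi)$ by \eqref{eq2.2b}, the factor $e^{-\underline{\beta}(\xi)\Rea z}$ comes out, and what remains is an integral against a radially decreasing Gaussian $\phi_\tau(r)=c_d(\kappa(\alpha)/\tau)^{d/2}e^{-r^2/(16\tau)}$, where $\tau:=|\alpha(\xi)|\Rea z$ and $\kappa(\alpha(\xi))\le\kappa(\alpha)$ by \eqref{eq2.1b}.

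\textbf{Layer-cake step.} I would use the standard layer-cake decomposition of a radially decreasing kernel into averages over balls:
\[
\int\phi(|x-y|)|u(y)|dy\le \int_0^\infty |B(x,r)|\,|\phi'(r)|\Bigl(|B(x,r)|^{-1}\int_{B(x,r)}|u|\Bigr)dr .
\]
Bounding each ball average by $|B(x,r)|^{-\delta'}\mathcal{M}_{\delta'}[u](x)$ leaves
\[
\int_0^\infty |B(0,r)|^{1-\delta'}|\phi_\tau'(r)|dr = C\,\kappa(\alpha)^{d/2}\,\tau^{-d\delta'/2}.
\]
This scaling in $\tau$ is exact. With $\delta'=0$ it gives \eqref{eq3.8a} with a constant independent of $\tau$, which is why that estimate holds without \eqref{eq2.5}; the case $z=0$ is trivial.

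\textbf{Proof of \eqref{eq3.8b}.} Here the bound is $\underline{\beta}^s e^{-\underline{\beta}\Rea z}(|\alpha|\Rea z)^{-d\delta'/2}\mathcal{M}_{\delta'}[u]$, and the point is to trade the small diffusion $|\alpha(\xi)|$ for $\underline{\beta}(\xi)$ via \eqref{eq2.5}. That condition gives $|\alpha|^{-1}\le \kappa_\delta^{-1/\delta}\underline{\beta}^{(1-\delta)/\delta}$, so
\[
|\alpha|^{-d\delta'/2}\le C\,\underline{\beta}^{\frac{d\delta'(1-\delta)}{2\delta}}.
\]
The total power of $\underline{\beta}$ is therefore $m=s+\frac{d\delta'(1-\delta)}{2\delta}$. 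Using $\sup_{b>0}b^m e^{-b\Rea z}=C_m(\Rea z)^{-m}$ produces $(\Rea z)^{-s-\frac{d\delta'}{2\delta}}$ after combining with $(\Rea z)^{-d\delta'/2}$, which is exactly \eqref{eq3.8b}.

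\textbf{Proof of \eqref{eq3.8c}.} I would write the resolvent as the Laplace transform $(\lambda-\mathcal{T}_1(\xi))^{-1}=\int_0^\infty e^{-\lambda t}e^{t\mathcal{T}_1(\xi)}dt$ and bound pointwise with the kernel estimate, keeping $e^{-\underline{\beta}t}$ rather than absorbing it immediately. This gives
\[
\underline{\beta}^s\int_0^\infty e^{-\underline{\beta}t}(|\alpha|t)^{-d\delta'/2}dt\;\mathcal{M}_{\delta'}[u], \qquad \delta'=\tfrac{2\delta(1-s)}{d}.
\]
The $t$-integral converges iff $d\delta'/2<1$, i.e. $\delta(1-s)<1$, which always holds. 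It equals $C(|\alpha|)^{-d\delta'/2}\underline{\beta}^{d\delta'/2-1}$. Applying \eqref{eq2.5} again, the power of $\underline{\beta}$ becomes $s-1+\frac{d\delta'}{2}+\frac{d\delta'(1-\delta)}{2\delta}=s-1+\frac{d\delta'}{2\delta}=0$, so the bound is uniform in $\xi$ and $\lambda$.

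The constraint $1-\frac{d}{2\delta}<s$ is exactly $\delta'<1$, which is needed for $\mathcal{M}_{\delta'}$ to be defined and for the layer-cake integral to converge at infinity. For $s\le1$ we have $\delta'\ge0$. Since $\mathcal{M}_{\delta'}$ does not depend on $\xi$, integrating the pointwise bounds over $t$ is justified.

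\textbf{Main obstacle.} The delicate step is the layer-cake bound with explicit $\tau$-scaling. Integrating by parts needs $|B(0,r)|^{1-\delta'}\phi_\tau(r)\to0$ at both ends and convergence at infinity, which requires $\delta'<1$. One must also check that only $\kappa(\alpha)$ enters the constant, and not $|\alpha|$ or $\underline{\lambda}$ separately. Measurability in $\xi$ and the a.e. qualifications I would take from the construction in the proof of Theorem~\ref{lm3.3}(i.a).
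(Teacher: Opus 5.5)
Your proof is correct. For \eqref{eq3.8a} and \eqref{eq3.8b} it follows the paper's argument. You dominate $|G_z|$ by the radial Gaussian $G_{\Rea z}$ of Lemma~\ref{lm3.1} and run a layer-cake decomposition against $\mathcal{M}_{\delta'}$. The paper phrases this through superlevel balls and $\|G_{\Rea z}(|\cdot|^{1/(1-\delta')})\|_{L^1}$; you phrase it through $\int_0^\infty|B(0,r)|^{1-\delta'}|\phi_\tau'(r)|\,dr$, which is the same computation. Both arrive at the factor $(|\alpha(\xi)|\Rea z)^{-d\delta'/2}\underline{\beta}^s e^{-\underline{\beta}\Rea z}$ and then trade $|\alpha|$ for $\underline{\beta}$ via \eqref{eq2.5}.

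For \eqref{eq3.8c} your route is genuinely different. The paper first Laplace-transforms the Gaussian into a resolvent kernel bounded by the Bessel-type function $K_d$. It then uses the dimension-dependent asymptotics $r^{2-d}$, $|\ln r|$, $1$, builds a monotone majorant $\hat K_2$ for $d=2$, and does the layer-cake on the rescaled $\hat K_d$ with exponent $\frac{2\delta(1-s)}{d}$. Along the way it uses \eqref{eq2.5} to turn $(\lambda+\underline{\beta})^{s-1}$ into the matching power of $(\lambda+\underline{\beta})/|\alpha|$.

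You instead integrate the time-$t$ maximal bound over $t$. This is legitimate by Tonelli, since $G_t\ge 0$ and $\mathcal{M}_{\delta'}[u](x)$ does not depend on $t$ (you wrote "$\xi$", but $t$-independence is what matters). You correctly saw that $e^{-\underline{\beta}t}$ must stay inside the $t$-integral. Integrating \eqref{eq3.8b} directly with $\delta'=\frac{2\delta(1-s)}{d}$ would produce $\int_0 t^{-1}\,dt$, which diverges. Keeping the exponential gives a Gamma integral that converges because $\delta(1-s)<1$, and then the power of $\underline{\beta}$ cancels exactly. Your version is dimension-uniform and avoids the Bessel asymptotics and the logarithmic case. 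The paper's version also yields a pointwise bound on the resolvent kernel itself, but that extra information is not used later.

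Two minor remarks. First, $\underline{b}=\essinf_x\beta(x,\xi)\ge\underline{\beta}(\xi)$ is not literally what \eqref{eq2.2b} states. The paper's proof relies on the same reading, so this is not a gap relative to it. Second, in the Tonelli form $\phi(\rho)=\int_\rho^\infty|\phi'(r)|\,dr$ of the layer-cake, no boundary terms arise. For the Gaussian the $r$-integral converges at infinity for any $\delta'$, so $\delta'<1$ is needed only for $\mathcal{M}_{\delta'}$ to be meaningful.
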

\begin{proof} We prove \eqref{eq3.8b} and \eqref{eq3.8c} only, the proof of \eqref{eq3.8a} is almost 
identical to that of \eqref{eq3.8b}.

(a) Let $z$, with $\Rea z>0$, be fixed. 
By \eqref{eq3.3}, $\bigl|G_z\bigl(x,y; \alpha(\xi), \beta(\xi)\bigr)\bigr|$ 
is dominated by the radially decreasing kernel $G_{\Rea z}\bigl(|x-y|; \alpha(\xi),\beta(\xi)\bigr)$
for a.a. $\xi \in\mathbb{R}_+$.  
Hence, for any $\zeta>0$, and a.e. $\xi\in\mathbb{R}_+$, we have 
\begin{align*}
&\bigl\{G_{\Rea z}\bigl(|y|; \alpha(\xi),\beta(\xi)\bigr)>\zeta\bigr\} 
= B\bigl(0,r_\zeta(\xi)\bigr) :=\bigl\{y\,\bigr|\, |y|<r_\zeta(\xi)\bigr\},\\
&\bigl\{G_{\Rea z}\bigl(|y|^{\frac{1}{\theta}}; \alpha(\xi),\beta(\xi)\bigr)>\zeta\bigr\} 
= B\bigl(0,r_\zeta(\xi)^\theta\bigr),\quad \theta>0,
\end{align*} 
for some $r_\zeta(\xi)>0$.
It turns out that if $u\in L^1_{\text{loc}}(\mathbb{R}^d)$, then
\begin{align*}
&\underline{\beta}(\xi)^s\int_{\mathbb{R}^d} \bigl|G_z\bigl(x,y; \alpha(\xi), \beta(\xi)\bigr) u(y)\bigr|dy
\le \underline{\beta}(\xi)^s \int_{\mathbb{R}_+} d\zeta \int_{B(0,r_\zeta(\xi))} |u|(x-y) dy\\
&\qquad
\le \underline{\beta}(\xi)^s \int_{\mathbb{R}_+} |B(0,r_\zeta(\xi))|^{1-\delta'} d\zeta
\mathcal{M}_{\delta'}[u](x)\\
&\qquad
= \underline{\beta}(\xi)^s \int_{\mathbb{R}_+} |B(0,r_\zeta(\xi)^{1-\delta'})| d\zeta
\mathcal{M}_{\delta'}[u](x)\\
&\qquad
= \underline{\beta}(\xi)^s 
\bigl\|G_{\Rea z}\bigl(|\cdot|^\frac{1}{1-\delta'};\alpha(\xi),\beta(\xi)\bigr)\bigr\|_{L^1(\mathbb{R}^d)}
\mathcal{M}_{\delta'}[u](x)\\
&\qquad
= c\kappa(\alpha)^{\frac{d}{2}}
(|\alpha(\xi)| \Rea z)^{-\frac{d}{2}\delta'}  \underline{\beta}(\xi)^s  e^{-\underline{\beta}(\xi) \Rea z}
\mathcal{M}_{\delta'}[u](x),
\end{align*}
for a.a. $x\in \mathbb{R}^d$.
Since in view of \eqref{eq2.5},
\begin{align*}
&(|\alpha(\xi)|\Rea z)^{-\frac{d}{2}\delta'}  \underline{\beta}(\xi)^s  e^{-\underline{\beta}(\xi) \Rea z}\\
&\qquad\qquad
\le \bigl[c \kappa_\delta(\alpha,\beta)^{-\frac{d\delta'}{2\delta}}
\sup_{\zeta>0} \zeta^{s+\frac{d\delta'(1-\delta')}{2\delta}} e^{-\zeta}\bigr]
(\Rea z)^{-s-\frac{d\delta'}{2\delta}}
= c (\Rea z)^{-s-\frac{d\delta'}{2\delta}},
\end{align*}
for a.a. $\xi\in \mathbb{R}_+$, we arrive at \eqref{eq3.8b}.

(b) In view of \eqref{eq3.3}, for $\lambda>0$, we have 
\begin{align*}
&\bigr|(\lambda - \mathcal{T}_1(\xi))^{-1}[u](x)\bigl| \le 
\int_{\mathbb{R}_+} e^{-\lambda t} dt \int_{\mathbb{R}^d} 
G_t\bigl(x-y; \alpha(\xi),\beta(\xi)\bigr) |u(y)|dy\\
&\qquad\qquad\qquad\qquad
= \int_{\mathbb{R}^d} K_\lambda\bigl(x-y; \alpha(\xi),\beta(\xi)\bigr) |u(y)| dy,\\
&K_\lambda(x;\alpha(\xi),\beta(\xi)) \le  c_d \bigl(\tfrac{\kappa(\alpha(\xi))}{|\alpha(\xi)|}\bigr)^{\frac{d}{2}}
\bigl(\lambda+\underline{\beta}(\xi)\bigr)^{\frac{d}{2}-1} 
K_d\left(|x|\bigl(\tfrac{\lambda+\underline{\beta}(\xi)}{4|\alpha(\xi)|}\bigr)^\frac{1}{2}\right),\\
&K_d(r) := \int_{\mathbb{R}^+} t^{-\frac{d}{2}} e^{-t-\frac{r^2}{4t}} dt,\quad r> 0,
\end{align*}
where $c_d>1$ is an absolute constant that depends on the dimension $d\ge 1$ only.
It is well known (see e.g. \cite[Section 1.6]{Hen1993}) that
\[
K_d(r) \le \bar{K}_d(r) :=c e^{-r} \left\{
\begin{array}{ll}
r^{2-d}, &d>2, \\
\bigl|\ln r \bigr|, &d = 2,\\
1, & d=1,  
\end{array}
\right.
\quad r >0.
\]
We note that $\bar{K}_d(r)$ is monotone decreasing if $d\ne 2$ and that
there exists a continuous monotone decreasing fiunctions $\hat{K}_2(r)$, that satisfies 
\begin{align*}
&\bar{K}_2(r) = \hat{K}_2(r),\quad r\in \mathbb{R}_+\setminus \bigl[\tfrac{1}{2},2\bigr];\quad
\bar{K}_2(r) \le \hat{K}_2(r),\quad r\in\mathbb{R}_+.
\end{align*}
For $d\ne 2$, we let $\hat{K}_d(r) := \bar{K}_d(r)$ and proceed as in part (a) of the 
proof to obtain
\begin{align*}
&\bigl|\underline{\beta}(\xi)^s\bigl(\lambda - \mathcal{T}_1(\xi)\bigr)^{-1} [u](x)\bigr| \\
&\qquad\qquad
\le c \bigl(\tfrac{\lambda+\underline{\beta}(\xi)}{4|\alpha(\xi)|}\bigr)^{\frac{d}{2} - (1-s)\delta}
\int_{\mathbb{R}^d} 
\hat{K}_d\left(|x|\bigl(\tfrac{\lambda+\underline{\beta}(\xi)}{4|\alpha(\xi)|}\bigr)^\frac{1}{2}\right) |u(x-y)|dy\\
&\qquad\qquad
\le c \int_{\mathbb{R}+} \left|\bigl\{\hat{K}_d(|\cdot|)>\zeta\bigr\}\right|^{1-\frac{(1-s)2\delta}{d}} d\zeta 
\mathcal{M}_{\frac{(1-s)2\delta}{d}}[u](x)\\
&\qquad\qquad
\le c\bigl\|\hat{K}_d(|\cdot|^\frac{d}{d-(1-s)2\delta})\bigr\|_{L^1(\mathbb{R}^d)}
\mathcal{M}_{\frac{(1-s)2\delta}{d}}[u](x)
 = c \mathcal{M}_{\frac{(1-s)2\delta}{d}}[u](x),
\end{align*}
for a.a. $x\in\mathbb{R}^d$ and $\xi\in\mathbb{R}_+$.
\end{proof}

\begin{proof}[Proof of Theorem~\ref{lm3.3}(i.b) and (ii) with $1<p<\infty$.] $\,$\\
(a) Let $1< p<\infty$ be fixed. Assume initially $u\in C_c\bigl(\mathbb{R}^d; L^1(\mathbb{R}_+, wd\xi)\bigr)$.
Then by Theorem~\ref{lm3.3}(i.a),  
the $L^1(\mathbb{R}_+, wd\xi)$-valued map $e^{z\mathcal{T}_1}u$ is Bochner integrable. 
Consequently, when $\Rea z>0$ and $v\in C_c(\mathbb{R}^d)$, the quantity 
\[
\bigl\langle \|e^{z\mathcal{T}_1} u\|_{L^1(\mathbb{R}_+,wd\xi)}, |v| \bigr\rangle
\] 
is finite. On the account of the Fubini theorem and Lemma~\ref{lm3.4},
we have 
\begin{align*}
&\bigl|\bigl\langle \|e^{z\mathcal{T}_1} u\|_{L^1(\mathbb{R}_+,wd\xi)}, v \bigr\rangle \bigr|\\
&\qquad\qquad
\le \int _{\mathbb{R}^d} \int_{\mathbb{R}_+} |u(\xi,x)| 
\int_{\mathbb{R}^d} \bigl|G_{\bar{z}}\bigl(x, y;\alpha(\xi),\beta(\xi)\bigr)\bigr| |v(y)| dy wd\xi dx\\
&\qquad\qquad
\le c_{0,0}\int_{\mathbb{R}^d} \|u(x)\|_{L^1(\mathbb{R}_+,wd\xi)} \mathcal{M}_{0}[v] (x) dx\\
&\qquad\qquad
\le c_{0,0} \|u\|_{X^p_w} \bigl\|\mathcal{M}_{0}[v] \bigr\|_{L^{p'}(\mathbb{R}^d)}
\le C_{p} \|u\|_{X^p_w} \|v\|_{L^{p'}(\mathbb{R}^d)},
\end{align*}
where $C_{p}:=c_{0,0}\|\mathcal{M}_0\|_{L^{p'}(\mathbb{R}^d)\to L^{p'}(\mathbb{R}^d)}>0$
is finite, provided $1\le p<\infty$. 
Since the class $C_c(\mathbb{R}^d)$ is dense in $L^{p'}(\mathbb{R}^d)$, we conclude 
that
\[
\|e^{z\mathcal{T}_1} u\|_{X^p_w} \le C_{p} \|u\|_{X^p_w},
\]
for data in $C_c\bigl(\mathbb{R}^d; L^1(\mathbb{R}_+, wd\xi)\bigr)$.
Since the latter class is dense in $X^p_w$, 
it follows that each $e^{z\mathcal{T}_1}$, $\Rea z>0$, extends to a bounded 
linear map $e^{z\mathcal{T}_p}$ in $X^p_w$ and that \eqref{eq3.7b} is satisfied.

(b) Verification of the strong continuity, the semigroup identity and the analyticity for data in 
$C_c\bigl(\mathbb{R}^d; L^1(\mathbb{R}_+, wd\xi)\bigr)$ is straightforward while the general result 
follows by density. The positivity of $\bigl\{e^{t\mathcal{T}_p}\bigr\}_{t\ge0}$ 
follows from the positivity of Green's functions $G_t\bigl(\cdot,\cdot,\alpha(\xi),\beta(\xi)\bigr)$, $t>0$. 

(c) To complete the proof, we note that $L^{p'}\bigl(\mathbb{R}^d; C_0(\mathbb{R}_+)\bigr)$
is a closed subspace of $(X^{p})^\ast$ that is norming for 
$X^{p}$, $1< p<\infty$. 
Accordingly, for $v\in L^{p'}\bigl(\mathbb{R}^d; C_0(\mathbb{R}_+)\bigr)$, 
$u\in C_c\bigl(\mathbb{R}^d; L^1(\mathbb{R}_+, wd\xi)\bigr)$, $1< p<\infty$ and $\Rea z>0$, 
the Fubini theorem and Theorem~\ref{lm3.3}(i.a) and (ii), with $p=1$, give
\[
\langle v, we^{z\mathcal{T}_p} u  \rangle 
= \langle v, w e^{z\mathcal{T}_1} u  \rangle 
= \langle e^{\bar{z}\mathcal{T}_1} v, w u  \rangle.
\] 
By density, this identity extends to 
\[
\langle v, we^{z\mathcal{T}_p} u  \rangle 
= \langle e^{\bar{z}\mathcal{T}_1} v, w u  \rangle,
\] 
for all $v\in L^{p'}\bigl(\mathbb{R}^d; C_0(\mathbb{R}_+)\bigr)$ and 
$u\in X^p_w$. In particular, 
using the Fubini theorem one more time, we obtain
\[
e^{z\mathcal{T}_p}[u](x) = e^{z\mathcal{T}_1}[u](x) = 
\int_{\mathbb{R}^d} G_z\bigl(x,y; \alpha(\cdot), \beta(\cdot)\bigr) u(y) dy,
\]
in $X^p_w$. The proof is complete.
\end{proof}

Bound \eqref{eq3.7b} indicates that in the case of $1<p<\infty$, the loss-diffusion semigroups 
are not necessarily contractive. Moreover, it is well known \cite[Section~1.5, p. 8]{St1970} that 
\[
\|\mathcal{M}_{0}\|_{L^{p'}(\mathbb{R}^d)\to L^{p'}(\mathbb{R}^d)} = \mathcal{O}(p),\quad p\to\infty,
\]
and our construction of $e^{z\mathcal{T}_p}$ fails for $p=\infty$.

As noted in Section~\ref{sec2}, conditions \eqref{eq2.1} and \eqref{eq2.2} alone 
are insufficient to guarantee any form of hyper-contractivity of the semigroups 
$\bigl\{e^{z\mathcal{T}_p}\bigr\}_{\{\Rea z>0\}\cup\{0\}}$, $1\le p<\infty$. If however 
\eqref{eq2.5} holds, the vector valued loss-diffusion semigroups of 
Theorem~\ref{lm3.3} behave similar to the fractional scalar diffusion. 

\begin{proof}[Proof of Theorem~\ref{lm3.3}(iii).] $\,$\\
(a) Bound \eqref{eq3.7c} follows from \eqref{eq3.8b} by duality. Indeed, for $\Rea z>0$,
$v\in L^{q'}(\mathbb{R}^d)$ and $u\in C_c\bigl(\mathbb{R}^d; L^1(\mathbb{R}_+, wd\xi)\bigr)$, 
the Fubini theorem, combined with H\"older's inequality, yields
\begin{align*}
&\bigl|\bigl\langle \|e^{z\mathcal{T}_p} u\|_{L^1(\mathbb{R}_+,\underline{\beta}^s wd\xi)}, v\bigr\rangle\bigr|
\le c_{s,\frac{1}{q'}-\frac{1}{p'}}(\Rea z)^{-s-\frac{d}{2\delta}(\frac{1}{q'}-\frac{1}{p'})}
\bigl\|\mathcal{M}_{\frac{1}{q'}-\frac{1}{p'}}[v]\bigr\|_{L^{p'}(\mathbb{R}^d)} 
\|u\|_{X^p_w}\\
&\qquad\qquad\qquad\qquad
\le C_{p,q,s} (\Rea z)^{-s-\frac{d}{2\delta}(\frac{1}{q'}-\frac{1}{p'})}\|v\|_{L^{q'}(\mathbb{R}^d)} 
\|u\|_{X^p_w},
\end{align*}
with 
\[
C_{p,q,s}:=c_{s,\frac{1}{q'}-\frac{1}{p'}} 
\bigl\|\mathcal{M}_{\frac{1}{q'}-\frac{1}{p'}}\bigr\|_{L^{q'}(\mathbb{R}^d)\to L^{p'}(\mathbb{R}^d)}.
\]
The latter quantity is known to be finite, provided $1\le p\le q < \infty$.
For $1<p< q <\infty$ see e.g. \cite[Theorem 3]{MuckWed1974},
the case $1=p< q<\infty$ follows from H\"olders inequality, while the case $1\le p=q<\infty$
is classical. Hence, \eqref{eq3.7c} follows by density.

(b) The inclusion \eqref{eq3.7f}, with $0\le s<1-\tfrac{d}{2\delta}\bigl(\tfrac{1}{p}-\tfrac{1}{q}\bigr)$, 
follows immediately from \eqref{eq3.7c} and the integral identity 
$(\lambda-\mathcal{T}_p)^{-1}u = \int_{\mathbb{R}_+} e^{-t(\lambda-\mathcal{T}_p)}udt$.
Further, proofs of Theorem~\ref{lm3.3}(i) and (ii), together with the same integral identity, 
indicate that 
\begin{subequations}\label{eq3.9}
\begin{align}
\label{eq3.9a}
&(\lambda - \mathcal{T}_p)^{-1}[u](x) 
= \int_{\mathbb{R}^d} K_\lambda\bigl(x,y; \alpha(\cdot), \beta(\cdot)\bigr) u(y) dy,\\
\label{eq3.9b}
& K_\lambda\bigl(x,y; \alpha(\cdot), \beta(\cdot)\bigr) = \int_{\mathbb{R}_+} e^{-\lambda t} 
G_t\bigl(x,y; \alpha(\cdot), \beta(\cdot)\bigr)dt,\quad \lambda>0,
\end{align}
\end{subequations}
for $u\in X^p_w$, $1\le p <\infty$ and a.e. 
$\xi\in\mathbb{R}_+$. Using this fact, bound \eqref{eq3.8c} and the duality argument similar to 
that employed in part (a) of the proof, we obtain \eqref{eq3.7f} with 
$s=1-\tfrac{d}{2\delta}\bigl(\tfrac{1}{p}-\tfrac{1}{q}\bigr)$ and $1\le p\le q<\infty$.
\end{proof}

It remains to characterize the generators $\bigl(\mathcal{T}_p, D(\mathcal{T}_p)\bigr)$, $1\le p<\infty$.  
\begin{proof}[Proof of Theorem~\ref{lm3.3}(iv)] $\,$\\
(a) Let $u\in C_c\bigr(\mathbb{R}^d; L^1(\mathbb{R}_+, wd\xi)\bigl)$ and $\lambda>0$ be fixed. 
We denote $f = (\lambda - \mathcal{T}_1)^{-1}u$. In view of Theorem~\ref{lm3.3}(i), (ii) and 
\eqref{eq3.9}, we have $f\in \bigcap_{p\ge 1} X^p_w$. 
Further, when $v\in \mathcal{D}(\mathbb{R}^d\times \mathbb{R}_+)$, under assumptions 
\eqref{eq2.1} and \eqref{eq2.2}, we have 
\[
\mathcal{T}_0v \in L^\infty\bigl(\mathbb{R}_+, L^\infty(\mathbb{R}^d)\bigr)\subsetneq 
L^\infty(\mathbb{R}^d\times\mathbb{R}_+)
\]
and therefore, the point-wise product $(\mathcal{T}_0v)wf$ is absolutely integrable. 
Using this fact, the Fubini theorem, \eqref{eq3.2e}, \eqref{eq3.9} and partial integration, we infer
\begin{align*}
\bigl\langle (\lambda- \mathcal{T}_0)v, wf\bigr\rangle
& 
 = \int_{\mathbb{R}_+}w(\xi)d\xi \int_{\mathbb{R}^d} \bigl(\lambda - \mathcal{T}_0(\xi)\bigr) [v](\xi, x) 
 \bigl(\lambda - \mathcal{T}_1(\xi)\bigr)^{-1} [u](x,\xi) dx\\
& = \int_{\mathbb{R}_+}w(\xi)d\xi \int_{\mathbb{R}^d} v(x,\xi) u(x,\xi) dx
= \langle v, wu\rangle.
\end{align*}
Since by density, this identity extends to 
$u\in X^p_w$, $1\le p<\infty$, we conclude that 
$\mathcal{T}_0 f$ is a regular Schwartz distribution for any $f\in D(\mathcal{T}_p)$, $1\le p<\infty$.

Further, since the Schwartz class $\mathcal{D}(\mathbb{R}^d\times \mathbb{R}_+)$
is dense in $C_0(\mathbb{R}^d\times\mathbb{R}_+)$ and in 
$L^{p'}\bigl(\mathbb{R}^d; C_0(\mathbb{R}_+)\bigr)$, $1<p<\infty$, and 
since  the latter spaces are norming for $X^1$ and for $X^{p}$, 
$1< p<\infty$, respectively, it follows also that 
\[
\bigl\langle v, w (\lambda- \mathcal{T}_0)f\bigr\rangle = \langle v, wu\rangle,
\]
for all $v\in C_0(\mathbb{R}^d\times\mathbb{R}_+)$, 
$u\in X^{1}_w$ and 
$v\in L^{p'}\bigl(\mathbb{R}^d; C_0(\mathbb{R}_+)\bigr)$, 
$u\in X^{p}_w$, $1<p<\infty$.
In other words, we have
\[
\mathcal{T}_0:D(\mathcal{T}_p)\to X^{p}_w, \quad
\bigl(\lambda - \mathcal{T}_0\bigr) \bigl(\lambda - \mathcal{T}_p\bigr)^{-1} = I,
\]
for each $1\le p<\infty$ and $\lambda>0$, provided 
all partial derivatives are understood in the sense of distributions.

Conversely, for $u\in X^{p}_w$, 
with $\mathcal{T}_0u\in X^{p}_w$,
$\lambda>0$ and 
$v\in\mathcal{D}(\mathbb{R}^d\times\mathbb{R}_+)$, the Fubini theorem, \eqref{eq3.2d}, 
\eqref{eq3.2e} and partial integration give 
\[
\bigl\langle v, w\bigl(\lambda-\mathcal{T}_p\bigr)^{-1}\bigl(\lambda-\mathcal{T}_0\bigr)u\bigr\rangle
=\bigl\langle \bigl(\lambda-\mathcal{T}_0\bigr)\bigl(\lambda-\mathcal{T}_p\bigr)^{-1}v, wu\bigr\rangle
=\langle v, wu\rangle.
\]
Hence, every such $u$ is an element of the domain $D(\mathcal{T}_p)$.

(b) For $u\in D(\mathcal{T}_p)$, the inclusion $\beta u\in X^{p}_w$ follows from \eqref{eq3.7f}, 
with $1\le p=q<\infty$ and $s=1$. The proof is complete.
\end{proof}

\subsection{The diffusion-fragmentation semigroup}\label{sec3.3}

Now we are in the position to prove Theorem~\ref{lm2.1}.
For $1\le p<\infty$ and $\ell>\bar{\ell}_0\vee 1$, we let (see Subsection~\ref{sec2.3})
\begin{subequations}\label{eq3.10}
\begin{align}
\label{eq3.10a}
&\mathcal{T}_{p,\ell} [u](x,\xi) := \nabla^T \alpha(x,\xi)\nabla u(x,\xi) - \beta(x, \xi) u(x,\xi),\\
\label{eq3.10b}
&D(\mathcal{T}_{p,\ell}) := \Bigl\{u\in X^p_{\ell},\,\Bigr|\,
\nabla^T\alpha\nabla u,\, \beta u \in X^p_\ell\Bigr\},\\
\label{eq3.10c}
&\mathcal{B}_{p,\ell}^+[u](x,\xi) := \int_\xi^\infty \gamma(\xi,\eta) 
\beta(x, \eta) u(x, \eta) d\eta,\\
\label{eq3.10d}
&D(\mathcal{B}_{p,\ell}^+) := \Bigl\{ u\in X^{p}_{\ell}\,\Bigl|\,
\beta u\in X^{p}_{\ell}\Bigr\},\\
\label{eq3.10e}
&\bigl(\mathcal{L}_{p,\ell}, D(\mathcal{T}_{p,\ell})\bigr) 
:=\bigl(\mathcal{T}_{p,\ell}+\mathcal{B}_{p,\ell}^+, D(\mathcal{T}_{p,\ell})\bigr).
\end{align}
\end{subequations}
\begin{proof}[Proof of Theorem~\ref{lm2.1}]
Our treatment of the vector-valued diffusion-frag\-men\-ta\-tion operator
$\mathcal{T}_{p,\ell}+\mathcal{B}_{p,\ell}^+$, $1\le p<\infty$, follows closely the technique 
employed in \cite[Theorem 5.1.48, pp. 224-225]{BanLamLau2019I} in context 
of the scalar fragmentation.

(a) We begin with the case of $1<p<\infty$.
For $u\in X^p_{\ell}$, with $\ell>\bar{\ell}_0\vee 1$ to be chosen later, 
we define $u_n(x,\xi) = u(x,\xi)\chi_{[0,n]}(\xi)$, $n\ge 1$. 
By Theorem~\ref{lm3.3}(iv), $(\lambda - \mathcal{T}_{p,\ell})^{-1} u_n \in D(\mathcal{B}_{p,\ell}^+)$, 
for any $\ell\ge 0$, $\lambda>0$ and $n\ge 1$. Hence, if 
$v\in L^{p'}(\mathbb{R}^d)$, $n\ge 1$, the quantities
\begin{align*}
&\bigl|\bigl\langle \|\mathcal{B}_{p,\ell}^+ (\lambda -\mathcal{T}_{p,\ell})^{-1}
u_n\|_{L^1(\mathbb{R}_+,w_\ell d\xi)}, 
v \bigr\rangle\bigr|\\
&\qquad\qquad\qquad
 = \Bigl|\Bigl\langle \Bigl\|\mathcal{B}_{p,\ell}^+ \int_{\mathbb{R}_+} 
e^{-t(\lambda-\mathcal{T}_{p,\ell})}u_n dt\Bigr\|_{L^1(\mathbb{R}_+,w_\ell d\xi)}, 
v \Bigr\rangle\Bigr|
\end{align*}
are finite. Using this fact together with the Fubini theorem and Lemma~\ref{lm3.4}, as in the proof of 
Theorem~\ref{lm3.3}(i.b) we infer
\begin{align*}
&\bigl|\bigl\langle \|\mathcal{B}_{p,\ell}^+ (\lambda -\mathcal{T}_{p,\ell})^{-1}
u_n\|_{L^1(\mathbb{R}_+,w_\ell d\xi)}, v \bigr\rangle\bigr|
\le c_{d,\alpha,\beta} \int_{\mathbb{R}^d} \mathcal{M}_{0}[v](x)dx\\
&\qquad\qquad\qquad\qquad\qquad\qquad
\int_{\mathbb{R}_+} 
\Bigl(\int_0^\eta \gamma(\xi,\eta) w_\ell (\xi)d\xi\Bigr) 
\tfrac{\underline{\beta}(\eta)}{\lambda+\underline{\beta}(\eta)} |u_n|(x,\eta) d\eta,
 \end{align*}
where $c_{d,\alpha,\beta}\ge 1$ depends on $d$, $\kappa(a)$ and $C_\beta$ from \eqref{eq2.2b} 
only. To estimate the inner integral with respect to $\eta$, for a given $\ell>\bar{\ell}_0$, we choose 
$\bar{\eta}>0$ so large that $\eta^{-\ell}\int_0^1 \nu_\eta(s)s^{\ell-1}\le 2\sigma_\ell$, 
$\eta\ge \bar{\eta}$.
Then in view of \eqref{eq2.3b}, \eqref{eq2.3c} and the inequality $\ell>\bar{\ell}_0$, a.e. in $\mathbb{R}^d$ 
we have 
\begin{align*}
&\int_{\mathbb{R}_+} 
\Bigl(\int_0^\eta \gamma(\xi,\eta) w_\ell (\xi)d\xi\Bigr) 
\tfrac{\underline{\beta}(\eta)}{\lambda+\underline{\beta}(\eta)} |u_n|(x,\eta) dx\\
&\qquad\qquad
\le (1+\sigma_{0,\bar{\ell}_0}) \int_0^{\bar{\eta}} \tfrac{\underline{\beta}(\eta)}{\lambda+\underline{\beta}(\eta)}
|u_n|(x,\eta) w_\ell(\eta) d\eta\\
&\qquad\qquad
+ \Bigl(\sigma_{0,\bar{\ell}_0}\tfrac{1+\bar{\eta}^{\bar{\ell}_0}}{1+\bar{\eta}^\ell} + 2\sigma_\ell\Bigr) 
\int_{\bar{\eta}}^\infty 
\tfrac{\underline{\beta}(\eta)}{\lambda+\underline{\beta}(\eta)}
|u_n|(x,\eta) w_\ell(\eta) d\eta\\
&\qquad\qquad
\le \Bigl(\tfrac{1+\sigma_{0,\bar{\ell}_0}}{\lambda}
\|\underline{\beta}\chi_{[0,\bar{\eta}]}\|_{L^\infty(\mathbb{R}_+)}
+ \sigma_{0,\bar{\ell}_0}\tfrac{1+\bar{\eta}^{\bar{\ell}_0}}{1+\bar{\eta}^\ell} + 2\sigma_\ell\Bigr)
\|u_n(x)\|_{L^1(\mathbb{R}_+,w_\ell d\xi)}.
\end{align*}   
We denote 
$C_{d,p,\alpha,\beta}: = c_{d,\alpha,\beta}\|\mathcal{M}_0\|_{L^{p'}(\mathbb{R}^d)\to L^{p'}(\mathbb{R}^d)}$.
In view of \eqref{eq2.3d}, there exists $\ell_p>1$ so that 
$\sigma_\ell \le \tfrac{1}{8C_{d,p,\alpha,\beta}}$ for all $\ell> \ell_p$. Hence, for every fixed 
$\ell>\bar{\ell}_0\vee \ell_p$, we can choose  $\bar{\eta}>0$ so large that 
$\sigma_{0,\bar{\ell}_0}\tfrac{1+\bar{\eta}^{\bar{\ell}_0}}{1+\bar{\eta}^\ell}\le \tfrac{1}{4C_{d,p,\alpha,\beta}}$
and then $\lambda>0$ so large that $\tfrac{1+\sigma_{0,\bar{\ell}_0}}{\lambda}
\|\underline{\beta}\chi_{[0,\bar{\eta}]}\|_{L^\infty(\mathbb{R}_+)}\le \tfrac{1}{4C_{d,p,\alpha,\beta}}$.
In these settings, we obtain
\begin{align*}
&\bigl|\bigl\langle \|\mathcal{B}_{p,\ell}^+ (\lambda -\mathcal{T}_{p,\ell})^{-1}
u_n\|_{L^1(\mathbb{R}_+,w_\ell d\xi)}, 
v \bigr\rangle\bigr|\\
&\qquad\qquad\qquad
\le \tfrac{3}{4} \|u\|_{X^p_{\ell}} \|v\|_{L^{p'}(\mathbb{R}^d)},
\quad n\ge 1.
\end{align*}
Since $u\in X^p_{\ell}$, 
$v\in L^{p'}(\mathbb{R}^d)$ were arbitrary and all operators are positive,  we employ 
the Lebesgue monotone convergence theorem to conclude that
\begin{equation}\label{eq3.12}
\|\mathcal{B}_{p,\ell}^+ (\lambda -\mathcal{T}_{p,\ell})^{-1}\|_{
X^p_{\ell}\to X^p_{\ell}} \le \tfrac{3}{4},
\quad \ell>\bar{\ell}_0\vee \ell_p.
\end{equation}

(b) Bound \eqref{eq3.12} indicates that
\[
(\lambda - \mathcal{T}_{p,\ell} - \mathcal{B}_{p,\ell}^+)^{-1} = 
(\lambda - \mathcal{T}_{p,\ell} )^{-1}\sum_{n\ge 0} \bigl[\mathcal{B}_{p,\ell}^+
(\lambda - \mathcal{T}_{p,\ell} )^{-1}\bigr]^n,
\] 
in $X^p_{\ell}$, provided $\ell>\bar{\ell}_0\vee \ell_p$ and 
$\lambda>0$ is sufficiently large. 
Since $(\lambda - \mathcal{T}_{p,\ell})^{-1}$ (see Theorem~\ref{lm3.3}(i)) and $\mathcal{B}_{p,\ell}^+$ 
are positive, the resolvent $(\lambda - \mathcal{T}_{p,\ell} - \mathcal{B}_{p,\ell}^+)^{-1}$ is positive in 
$X^p_{\ell}$. Since the latter space is a Banach lattice
and since the semigroup $\{e^{z\mathcal{T}_{p,\ell}}\}_{\{\Rea z>0\}\cup\{0\}}$ is positive and analytic, 
the perturbation theorem of W. Arendt and A. Rhandi, \cite[Theorem 1.1]{ArRh1991}, 
settles the claim for $1<p<\infty$. 
Calculations in $X^1_{\ell,s}$ settings are almost identical 
and are omitted. The proof of the first assertion is complete.

(c) To obtain \eqref{eq2.7}, we note that on the account of 
\cite[Theorems~5.1.2, p. 107 and 5.5.3, p.120]{BeLo1976}, we have
\begin{align}
\label{eq3.13}
&\bigl[X^{p_0}_{\ell}, X^{p_1}_{\ell,s}\bigr]_\theta
= X^{p}_{\ell,\theta s},\\
\nonumber
&0<\theta<1,\quad\tfrac{1}{p} = \tfrac{1-\theta}{p_0}+\tfrac{\theta}{p_1},\quad
1\le p_0\le p_1\le\infty,\quad p_0\ne \infty,
\end{align}
with equal norms, where $[\cdot,\cdot]_\theta$ is the standard complex interpolation functor.
On the other hand, by part (a) of the proof 
\[
\bigl(\lambda - \mathcal{T}_{p,\ell}\bigr)\bigl(\lambda-\mathcal{L}_{p,\ell}\bigr)^{-1}
\in\mathcal{L}\bigl(X^{p}_{\ell}\bigr), \quad 1\le p<\infty,
\]
for $\lambda>\omega_{p,\ell}\ge 0$, where $\omega_{p,\ell}$ is the growth type 
of $\bigl\{e^{t\mathcal{L}_{p,\ell}}\bigr\}_{t\ge 0}$.
Hence, \eqref{eq2.7} follows from the analyticity of 
$\bigl\{e^{t\mathcal{L}_{p,\ell}}\bigr\}_{t\ge 0}$, inclusion \eqref{eq3.7f} and the standard 
interpolation argument, by writing
\[
e^{-t(\lambda - \mathcal{L}_{p,\ell})} = 
\bigl(\lambda - \mathcal{T}_{p,\ell}\bigr)^{-1}
\Bigl[\bigl(\lambda - \mathcal{T}_{p,\ell}\bigr)\bigl(\lambda-\mathcal{L}_{p,\ell}\bigr)^{-1}\Bigr]
\Bigl[\bigl(\lambda-\mathcal{L}_{p,\ell}\bigr) e^{-t(\lambda - \mathcal{L}_{p,\ell})}\Bigr]
\]
and then estimating the product term by term.
\end{proof}

To conclude this section, we note that the perturbation argument, employed in Theorem~\ref{lm2.1}, 
requires 
\[
\sigma_{\ell_p} = \mathcal{O}\bigl(\| \mathcal{M} \|^{-1}_{ L^{p'}(\mathbb{R}^d)\to L^{p'}(\mathbb{R}^d)}\bigr)
 = \mathcal{O}(p^{-1}),\quad p\to \infty,
\]
while for the power, homogeneous and separable kernels, mentioned in Subsection~\ref{sec2.2}, 
we have $\sigma_\ell = \mathcal{O}(\ell^{-1})$, $\sigma_\ell \le  c\ell^{\frac{1}{p_h}-1}$
and $\sigma_\ell \le c\ell^{\frac{1}{p_{h_0}}-1}$, respectively. 
This shows that in general, the fragmentation-vanishing diffusion process requires very strong 
control (large $\ell_p$)  of the initial distribution of massive particles as $p\to\infty$.

\section{The local well-posedness}\label{sec4}

In this section, we prove Theorem~\ref{lm2.2}. Our main tool here is Theorem~\ref{lm2.1} 
that turns \eqref{eq1.1} into an abstract semi-linear parabolic problem.

\subsection{Auxiliary estimates}\label{sec4.1}

Proof of Theorem~\ref{lm2.2} relies on several basic facts which follow directly from 
Theorem~\ref{lm2.1} and assumptions listed in Subsection~\ref{sec2.2}. 
\begin{lemma}\label{lm4.2}
The semigroups 
$\bigl\{e^{t\mathcal{L}_{p,\ell}}\bigr\}_{t\ge 0}$ of Theorem~\ref{lm2.1} are compatible in the sense that 
\begin{subequations}\label{eq4.2}
\begin{align}
\label{eq4.2a}
&e^{t\mathcal{L}_{p_1,\ell_1}} u = e^{t\mathcal{L}_{p_2,\ell_2}} u,\quad 
u\in X^{p_1}_{\ell_1} \cap X^{p_2}_{\ell_2},\\
\label{eq4.2b}
&1\le p_1,p_2<\infty,
\quad \bar{\ell}_0\vee \ell_{p_1}<\ell_1,\; \bar{\ell}_0\vee \ell_{p_2}<\ell_2.
\end{align}
\end{subequations}
\end{lemma}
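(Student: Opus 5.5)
The plan is to reduce compatibility of the full semigroups to compatibility of their resolvents, and that in turn to compatibility of the loss-diffusion resolvents and of $\mathcal{B}^+$. Fix $p_1,p_2,\ell_1,\ell_2$ as in \eqref{eq4.2b}. Since $\{e^{t\mathcal{L}_{p_i,\ell_i}}\}_{t\ge0}$ are $C_0$-semigroups, the exponential formula (or Post--Widder inversion) expresses $e^{t\mathcal{L}_{p_i,\ell_i}}u$ as a limit of powers of $(\lambda-\mathcal{L}_{p_i,\ell_i})^{-1}u$. Both limits hold in the respective norms, and both spaces embed continuously into $L^1_{\text{loc}}(\mathbb{R}^d\times\mathbb{R}_+)$. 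It therefore suffices to prove
\[
(\lambda-\mathcal{L}_{p_1,\ell_1})^{-1}u=(\lambda-\mathcal{L}_{p_2,\ell_2})^{-1}u,\quad u\in X^{p_1}_{\ell_1}\cap X^{p_2}_{\ell_2},
\]
for all $\lambda$ sufficiently large. Powers are then handled by induction, because each resolvent maps the intersection into itself, as the series below shows.

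\textbf{Step 1: compatibility of $(\lambda-\mathcal{T}_{p,\ell})^{-1}$.} By Theorem~\ref{lm3.3}(ii) and \eqref{eq3.9}, for every $p$ and every weight $w_\ell$ the resolvent acts through the same kernel $K_\lambda(x,y;\alpha(\xi),\beta(\xi))$, fibrewise in $\xi$. Hence $(\lambda-\mathcal{T}_{p_1,\ell_1})^{-1}u$ and $(\lambda-\mathcal{T}_{p_2,\ell_2})^{-1}u$ agree a.e. on the intersection. The same holds trivially for $\mathcal{B}^+$: on $D(\mathcal{B}^+_{p_1,\ell_1})\cap D(\mathcal{B}^+_{p_2,\ell_2})$ it is given by the same integral formula \eqref{eq3.10c}. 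It follows that the operators $Q_i:=\mathcal{B}^+_{p_i,\ell_i}(\lambda-\mathcal{T}_{p_i,\ell_i})^{-1}$ coincide on $X^{p_1}_{\ell_1}\cap X^{p_2}_{\ell_2}$ and map it into itself.

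\textbf{Step 2: Neumann series.} Choose $\lambda$ large enough for both indices, as in part (a) of the proof of Theorem~\ref{lm2.1}, so that \eqref{eq3.12} holds for $i=1,2$. Then
\[
(\lambda-\mathcal{L}_{p_i,\ell_i})^{-1}u=(\lambda-\mathcal{T}_{p_i,\ell_i})^{-1}\sum_{n\ge0}Q_i^nu .
\]
By Step 1 the partial sums are the same function for $i=1,2$. The series converge in $X^{p_1}_{\ell_1}$ and in $X^{p_2}_{\ell_2}$ respectively, so a subsequence of the partial sums converges a.e. in both senses, and the limits agree a.e. This gives resolvent compatibility for all large $\lambda$. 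For $p_i=1$ the corresponding bound in $X^1_\ell$ is used instead, since the proof of Theorem~\ref{lm2.1} asserts it.

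\textbf{Main obstacle.} The delicate point is the measure-theoretic identification in Step 1 when $p_i=1$ versus $p_i>1$. There, Theorem~\ref{lm3.3}(ii) is proved via density of $C_c(\mathbb{R}^d;L^1(\mathbb{R}_+,w\,d\xi))$, with different weights $w_{\ell_1},w_{\ell_2}$. To handle it, take $u$ in the intersection and truncate: set $u_{n}=u\chi_{\{|x|\le n,\ \xi\le n\}}\chi_{\{\|u(x)\|\le n\}}$. Then $u_n$ lies in all the spaces simultaneously and converges to $u$ in both norms by dominated convergence. Apply the kernel formula to $u_n$, which is legitimate in each space, and pass to the limit using continuity of each resolvent in its own space. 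The final step, from the resolvents back to the semigroups via the exponential formula, then follows by the same a.e.-subsequence argument.
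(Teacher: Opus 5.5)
Your proof is correct, and its first two steps match the paper's. Compatibility of the loss-diffusion resolvents comes from the common kernel of Theorem~\ref{lm3.3}(ii) and \eqref{eq3.9}. Compatibility of $(\lambda-\mathcal{L}_{p_i,\ell_i})^{-1}$ then follows from the Neumann series in parts (a)--(b) of the proof of Theorem~\ref{lm2.1}.

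The only real difference is how you return from resolvents to semigroups. The paper uses the Dunford integral. That relies on analyticity and needs the resolvent identity on a whole sector $\{\lambda\}+\{|\Arg z|<\theta\}$ with $\theta>\pi/2$. You use the exponential (Euler/Post--Widder) formula $e^{tA}u=\lim_{n}\bigl[\tfrac{n}{t}(\tfrac{n}{t}-A)^{-1}\bigr]^n u$. It needs the identity only for large real $\lambda$ and holds for any $C_0$-semigroup.

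This makes your route slightly more economical. The bound \eqref{eq3.12} that drives the Neumann series is proved only for real $\lambda$. So the paper's claim of compatibility on a sector opening into the left half-plane implicitly needs an extra step, for instance analytic continuation of $z\mapsto\langle (z-\mathcal{L}_{p_i,\ell_i})^{-1}u,\varphi\rangle$ from the real half-line. Your argument avoids that step. The paper's version is quicker to state once analyticity is available.

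The truncation in your ``main obstacle'' paragraph is harmless but unnecessary. Theorem~\ref{lm3.3}(ii) and \eqref{eq3.9} already give the kernel representation for every $u\in X^p_w$ and every weight $w$. Hence the two loss-diffusion resolvents agree a.e. on $X^{p_1}_{\ell_1}\cap X^{p_2}_{\ell_2}$ directly.
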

\begin{proof}
The semigroups $\bigl\{e^{t\mathcal{T}_{p,\ell}}\bigr\}_{t\ge 0}$ are compatible, for by virtue 
of Theorem~\ref{lm3.3}(ii) they are given by the same integral formula \eqref{eq3.6b}. 
This observation, together with arguments from parts (a) and (b) of the proof of Theorem~\ref{lm2.1} 
(see Subsection~\ref{sec3.3}), implies that 
\[
(z - \mathcal{L}_{p_1,\ell_1})^{-1} u 
= (z - \mathcal{L}_{p_2,\ell_2})^{-1} u,\quad u\in X^{p_1}_{\ell_1} \cap X^{p_2}_{\ell_2},
\] 
for all $z \in \{\lambda\}+\{|\Arg z|<\theta\}$ and some 
$\tfrac{\pi}{2}<\theta<\pi$, provided $\lambda>0$ is sufficiently large.
Using the latter identity, together with the Dunford integral representations of 
$e^{t\mathcal{L}_{p_i,\ell_i}} u$, $i=1,2$, we arrive at \eqref{eq4.2}. 
\end{proof}

\begin{lemma}\label{lm4.3}
Under assumptions of Theorem~\ref{lm2.1},
\begin{subequations}\label{eq4.3}
\begin{align}
\label{eq4.3a}
&e^{\cdot \mathcal{L}_{p,\ell}}\in \mathcal{L}\bigl(X^p_{\ell}, 
C_{s+\frac{d}{2\delta}(\frac{1}{p}-\frac{1}{q})}\bigl((0,T], X^q_{\ell,s}\bigr)\bigr),\\
\label{eq4.3b}
&0< s+\tfrac{d}{2\delta}\bigl(\tfrac{1}{p}-\tfrac{1}{q}\bigr)\le 1,\quad 1\le p\le q<\infty,
\quad \bar{\ell}_0\vee\ell_p<\ell,
\end{align}
\end{subequations}
for every $0<T<\infty$. 
\end{lemma}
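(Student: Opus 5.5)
Write $\mu := s+\tfrac{d}{2\delta}(\tfrac1p-\tfrac1q)\in(0,1]$. Given $u\in X^p_\ell$, we must show three things: $t\mapsto e^{t\mathcal{L}_{p,\ell}}u$ is continuous from $(0,T]$ into $X^q_{\ell,s}$; $\sup_{0<t\le T} t^{\mu}\|e^{t\mathcal{L}_{p,\ell}}u\|_{X^q_{\ell,s}}\le C_T\|u\|_{X^p_\ell}$; and $t^\mu\|e^{t\mathcal{L}_{p,\ell}}u\|_{X^q_{\ell,s}}\to 0$ as $t\to0^+$. The bound is simply \eqref{eq2.7} from Theorem~\ref{lm2.1}, with $C_T = C_{p,q,s}e^{\omega_{p,\ell}T}$. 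This also shows that the map into the weighted sup-space is linear and bounded, provided the image actually lies in $C_\mu\bigl((0,T],X^q_{\ell,s}\bigr)$.

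\textbf{Continuity on $(0,T]$.} Fix $t_0>0$ and write, for $t$ near $t_0$ and $\tau:=t_0/2$,
\[
e^{t\mathcal{L}_{p,\ell}}u-e^{t_0\mathcal{L}_{p,\ell}}u = e^{\tau\mathcal{L}_{p,\ell}}\bigl(e^{(t-\tau)\mathcal{L}_{p,\ell}}u-e^{(t_0-\tau)\mathcal{L}_{p,\ell}}u\bigr).
\]
By \eqref{eq2.7}, $e^{\tau\mathcal{L}_{p,\ell}}$ maps $X^p_\ell$ boundedly into $X^q_{\ell,s}$. The bracket tends to zero in $X^p_\ell$ by strong continuity of the $C_0$-semigroup. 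Hence the difference tends to zero in $X^q_{\ell,s}$.

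\textbf{Vanishing at $t=0$.} This is the main point, and I would use a density argument. The operators $\Phi_t:=t^\mu e^{t\mathcal{L}_{p,\ell}}$ are uniformly bounded $X^p_\ell\to X^q_{\ell,s}$ for $t\in(0,T]$, by \eqref{eq2.7}. So it suffices to show $\|\Phi_t v\|_{X^q_{\ell,s}}\to0$ for $v$ in a dense subset of $X^p_\ell$.

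\emph{Case $\mu<1$.} Take $v\in X^p_\ell\cap X^q_{\ell,s}$, compatible in the sense of Lemma~\ref{lm4.2}. If $\ell$ exceeds the thresholds for both exponents, $e^{t\mathcal{L}}v$ stays bounded in $X^q_{\ell,s}$. For $s>0$ one needs $\mathcal{L}$-semigroup boundedness on $X^q_{\ell,s}$, which follows from the weighted version mentioned at the end of part (b) of the proof of Theorem~\ref{lm2.1}. Then $t^\mu\|e^{t\mathcal{L}}v\|\le Ct^\mu\to0$.

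A cleaner alternative avoids extra thresholds. Take $v=e^{\epsilon\mathcal{L}_{p,\ell}}w$ with $w$ in a dense set, or use interpolation. Choose $\mu'$ with $\mu<\mu'\le1$ and intermediate exponents $(q_0,s_0)$ such that $\tfrac{d}{2\delta}(\tfrac1p-\tfrac1{q_0})+s_0=\mu'$, and interpolate via \eqref{eq3.13} between $X^p_\ell$ and that space. This gives
\[
\|e^{t\mathcal{L}}v\|_{X^q_{\ell,s}}\le C t^{-\mu'\theta}\|v\|_{D}
\]
for $v$ in a smoother space $D$ (e.g.\ $D(\mathcal{L}_{p,\ell})$, using analyticity to gain $t^{-\mu'}$ with an extra factor of $t$). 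Thus $t^\mu\|\cdot\|\le Ct^{\mu+1-\mu'-\dots}\to0$.

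\emph{Case $\mu=1$ (endpoint).} This is the delicate one and I expect it to be the main obstacle. Take $v\in D(\mathcal{L}_{p,\ell})$, which is dense. Write
\[
e^{t\mathcal{L}}v=(\lambda-\mathcal{T}_{p,\ell})^{-1}\bigl[(\lambda-\mathcal{T}_{p,\ell})(\lambda-\mathcal{L}_{p,\ell})^{-1}\bigr]e^{t\mathcal{L}}(\lambda-\mathcal{L})v,
\]
as in part (c) of the proof of Theorem~\ref{lm2.1}. By \eqref{eq3.7f} at the endpoint $s=1-\tfrac{d}{2\delta}(\tfrac1p-\tfrac1q)$, this is bounded in $X^q_{\ell,s}$ uniformly in $t\in(0,T]$. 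Hence $t\|e^{t\mathcal{L}}v\|_{X^q_{\ell,s}}\le Ct\|v\|_{D(\mathcal{L})}\to0$. The same representation handles $\mu<1$ as well, since \eqref{eq3.7f} covers all admissible $s$ at the level $\mu\le1$. So I would use it uniformly in both cases.

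Combining uniform boundedness with convergence on the dense set $D(\mathcal{L}_{p,\ell})$ (an $\varepsilon/3$ argument) yields $t^\mu\|e^{t\mathcal{L}}u\|_{X^q_{\ell,s}}\to0$ for all $u\in X^p_\ell$. This proves \eqref{eq4.3a}.
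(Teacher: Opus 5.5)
Your argument is correct and is essentially the paper's. Both use the uniform bound from \eqref{eq2.7}, an $\varepsilon$-density argument for the vanishing at $t=0$, and continuity on $(0,T]$ via the semigroup property and strong continuity in $X^p_\ell$. The only real difference is the dense class: you take $D(\mathcal{L}_{p,\ell})$ and use the factorization through $(\lambda-\mathcal{T}_{p,\ell})^{-1}$ together with \eqref{eq3.7f} to show explicitly that $t^{\mu}\|e^{t\mathcal{L}_{p,\ell}}v\|_{X^q_{\ell,s}}\to0$ there. The paper, which uses test functions, simply asserts this step, so your version fills that gap, and the tentative detours in your ``Case $\mu<1$'' paragraph (requiring $\ell>\ell_q$, the vague interpolation bound) can be dropped.
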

\begin{proof} This is an elementary consequence of \eqref{eq2.7}. 
Indeed, $\mathcal{D}(\mathbb{R}^d\times\mathbb{R}_+)$ is the common core of 
$\bigl(\mathcal{L}_{p,\ell}, D(\mathcal{T}_{p,\ell})\bigr)$, $1\le p<\infty$,
hence, for $u_0\in X^p_\ell$ and $\varepsilon>0$ being fixed, one can choose 
$v_0\in \mathcal{D}(\mathbb{R}^d\times\mathbb{R}_+)$ 
so that
\[
\|u_0-v_0\|_{X^p_\ell} \sup_{t\in(0,T]} t^{s+\frac{d}{2\delta}(\frac{1}{p}-\frac{1}{q})} 
\|e^{t\mathcal{L}_{p,\ell}}\|_{X^p_\ell \to X^q_{\ell,s}} < \varepsilon.
\]
Subsequently,
\begin{align*}
\limsup_{t\to 0^+} t^{s+\frac{d}{2\delta}(\frac{1}{p}-\frac{1}{q})} \|e^{t\mathcal{L}_{p,\ell}}u_0\|_{X^{q}_{\ell,s}} 
&\le \|u_0-v_0\|_{X^p_\ell} \sup_{t\in(0,T]} t^{s+\frac{d}{2\delta}(\frac{1}{p}-\frac{1}{q})} 
\|e^{t\mathcal{L}_{p,\ell}}\|_{X^p_\ell \to X^p_{\ell,s}}\\
&+\limsup_{t\to 0^+} t^{s+\frac{d}{2\delta}(\frac{1}{p}-\frac{1}{q})} \|e^{t\mathcal{L}_{p,\ell}}v_0\|_{X^{q}_{\ell,s}} 
< \varepsilon,
\end{align*} 
and as $\varepsilon$ is arbitrary, we conclude that 
\[
\lim_{t\to 0^+} t^{s+\frac{d}{2\delta}(\frac{1}{p}-\frac{1}{q})} \|e^{t\mathcal{L}_{p,\ell}}u_0\|_{X^{q}_{\ell,s}} = 0.
\]
Further, when $0<t_1\le t_2\le T$, letting $r:=s+\tfrac{d}{2\delta}\bigl(\tfrac{1}{p}-\tfrac{1}{q}\bigr)$ for brevity,
we infer
\begin{align*}
\|t_2^r e^{t_2\mathcal{L}_{p,\ell}}u_0 
- t_1^r e^{t_1\mathcal{L}_{p,\ell}}u_0 \|_{X^{q}_{\ell,s}}
&\le t_1^r 
\|e^{t_1\mathcal{L}_{p,\ell}}\|_{X^p_{\ell}\to X^q_{\ell,s}}
 \bigl\|(e^{(t_2-t_1)\mathcal{L}_{p,\ell}} - I)u_0\bigr\|_{X^p_\ell}\\
&+ \tfrac{t_2^r-t_1^r}{t_2^r}
\|e^{t_2\mathcal{L}_{p,\ell}}\|_{X^p_\ell\to X^q_{\ell,s}}\|u_0\|_{X^p_\ell}.
\end{align*}
The last bound, together with \eqref{eq2.7} and strong continuity of 
$\bigl\{e^{t\mathcal{L}_{p,\ell}}\bigr\}_{t\ge 0}$, gives the inclusion
$t^{s+\frac{d}{2\delta}(\frac{1}{p}-\frac{1}{q})}e^{t\mathcal{L}_{p,\ell}}u_0\in C\bigl((0,T], X^q_{\ell,s}\bigr)$.
\end{proof}

\begin{lemma}\label{lm4.4}
Under assumption \eqref{eq2.6}, 
the map $\mathcal{C}(u,v)$ defined in \eqref{eq1.1c} is symmetric, bi-linear 
and satisfies
\begin{align}
\nonumber
\bigl\|\mathcal{C}(u,v)\bigr\|_{X^p_{\ell}} 
&\le \tfrac{c_\varkappa}{2}(1+2^\ell) \bigl[
\|u\|_{X^{pq_1}_{\ell,\rho}} \|v\|_{X^{pq_1'}}
+\|u\|_{X^{pq_2}} \|v\|_{X^{pq_2'}_{\ell,\rho}}\\
\label{eq4.4}
&\qquad\qquad\quad
+\|u\|_{X^{pq_3}_{\ell}} \|v\|_{X^{pq_3'}_{0,\rho}}
+\|u\|_{X^{pq_4}_{0,\rho}} \|v\|_{X^{pq_4'}_{\ell}}
\bigr],
\end{align}
for any $1\le p<\infty$, $\ell\ge 1$, $1\le q_i\le \infty$, $1\le i \le 4$, 
provided all quantities appearing in the right-hand side of \eqref{eq4.4} are finite. 
\end{lemma}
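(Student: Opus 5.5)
Symmetry and bilinearity are immediate from the definition \eqref{eq1.1d}: each of $\mathcal{C}^\pm(u,v)$ is written as a symmetrized expression in $(u,v)$, and $\varkappa(\xi,\eta)=\varkappa(\eta,\xi)$ by \eqref{eq2.6b}. So the work is in \eqref{eq4.4}. I will bound $\mathcal{C}^+$ and $\mathcal{C}^-$ separately. At each fixed $x$ I take the $L^1(\mathbb{R}_+,w_\ell d\xi)$ norm and use $|\varkappa(x,\xi,\eta)|\le c_\varkappa[\underline{\beta}^\rho(\xi)+\underline{\beta}^\rho(\eta)]$ from \eqref{eq2.6c}. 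Only at the end do I take the $L^p(\mathbb{R}^d)$ norm in $x$, using H\"older with exponents $q_i,q_i'$.

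For the gain term, I substitute $\xi=\eta+\zeta$ and apply Fubini for fixed $x$. The integral $\int_{\mathbb{R}_+}w_\ell(\xi)|\mathcal{C}^+(u,v)|d\xi$ is then dominated by
\[
\tfrac{1}{4}\int\!\!\int_{\mathbb{R}_+^2} w_\ell(\eta+\zeta)\,\|\varkappa(\eta,\zeta)\|_{L^\infty}\,\bigl[|u(\eta)||v(\zeta)|+|v(\eta)||u(\zeta)|\bigr]d\eta d\zeta.
\]
Next I use the elementary inequality $w_\ell(\eta+\zeta)=1+(\eta+\zeta)^\ell\le 1+2^{\ell-1}(\eta^\ell+\zeta^\ell)$, valid for $\ell\ge1$. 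This gives
\[
w_\ell(\eta+\zeta)\le \tfrac{1+2^\ell}{2}\,[\,w_\ell(\eta)+w_\ell(\zeta)\,],
\]
with room to spare. I also expand the kernel bound $\underline{\beta}^\rho(\eta)+\underline{\beta}^\rho(\zeta)$. Multiplying out and using symmetry in $(\eta,\zeta)$ to merge the two symmetrized halves leaves four types of products.

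The first type puts the weight $w_\ell\underline{\beta}^\rho$ on one variable and $1$ on the other, giving $\|u(x)\|_{L^1(w_\ell\underline\beta^\rho)}\|v(x)\|_{L^1}$ or the same with $u,v$ swapped. The second type puts $w_\ell$ on one variable and $\underline{\beta}^\rho$ on the other, giving $\|u(x)\|_{L^1(w_\ell)}\|v(x)\|_{L^1(\underline\beta^\rho)}$ or the swap. These are exactly the pairs of norms appearing in \eqref{eq4.4}, since $X^{r}_{0,\rho}$ carries the weight $w_0\underline\beta^\rho$ with $w_0=2\ge 1$.

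For the loss term I bound $w_\ell(\xi)$ trivially, with no shift needed. $\mathcal{C}^-$ already factorizes as $|u(\xi)|\,|v(\eta)|$ weighted by $w_\ell(\xi)[\underline\beta^\rho(\xi)+\underline\beta^\rho(\eta)]$. This produces the pairings $\|u\|_{L^1(w_\ell\underline\beta^\rho)}\|v\|_{L^1}$ and $\|u\|_{L^1(w_\ell)}\|v\|_{L^1(\underline\beta^\rho)}$ plus the symmetric ones, which fit under the same constant $\frac{c_\varkappa}{2}(1+2^\ell)$, since $1\le 2^\ell$ absorbs the loss coefficients $\tfrac12$.

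Finally, I take the $L^p(\mathbb{R}^d)$ norm of the pointwise bound. By H\"older in $x$, $\|fg\|_{L^p}\le\|f\|_{L^{pq}}\|g\|_{L^{pq'}}$, and here I may choose $q_i$ independently for each of the four products. This yields \eqref{eq4.4}, with finiteness of the right-hand side justifying Fubini.

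The main obstacle is purely bookkeeping: getting exactly the constant $\tfrac{c_\varkappa}{2}(1+2^\ell)$ requires tracking the $\tfrac14,\tfrac12$ prefactors and the symmetrization carefully. Any slack can be absorbed, because the weight $w_0=2$ in $X_{0,\rho}$ and $w_\ell\ge 1$ dominate the unweighted norms.
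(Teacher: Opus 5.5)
Your proposal is correct and is essentially the paper's argument, which simply invokes the kernel bound \eqref{eq2.6c}, an inequality of the form $(\xi+\eta)^\ell\le 2^\ell(\xi^\ell+\eta^\ell)$, Fubini in the size variable and H\"older in $x$. Your constant bookkeeping also checks out: the gain term contributes at most $\tfrac{c_\varkappa}{4}(1+2^\ell)$ per product and the loss term at most $\tfrac{c_\varkappa}{2}$, and these sum to at most $\tfrac{c_\varkappa}{2}(1+2^\ell)$ because $2^\ell\ge 1$.
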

\begin{proof}
Bound \eqref{eq4.4} follows 
immediately from \eqref{eq1.1c}, \eqref{eq2.6} and the inequality 
$(\xi+\eta)^\ell \le 2^{\ell}(\xi^\ell+\eta^\ell)$, $\ell\ge 1$, $\xi,\eta\in\mathbb{R}_+$.
\end{proof}

In the absence of $X^\infty_\ell$ estimates, our proof of Theorem~\ref{lm2.2}(iii)
relies on solving non-autonomous parabolic problem of the form 
\begin{equation}\label{eq4.5}
u_ t  = (\mathcal{L}_{p,\ell} + \mathcal{E}_{v}) u,\quad 
u(\tau) = u_0, \quad 
0\le \tau<t\le T,
\end{equation}
where 
\[
\mathcal{E}_{v}[u] := - \bigl[(1+\underline{\beta})^\rho v\bigr] u
\]
is the multiplication operator
and $v$ is a H\"older continuous $L^{2p}(\mathbb{R}^d)$-valued function defined
in $[0,T]$. In connection with \eqref{eq4.5}, we have
\begin{lemma}\label{lm4.5}
Assume 
\begin{align*}
&2\le p,\quad \tfrac{d}{4(1-\rho)\delta}<p,\quad \bar{\ell}_0\vee \ell_p < \ell,\\
&v\in C^\mu \bigl([0,T], L^{2p}_+(\mathbb{R}^d)\bigr), \quad 0<\mu <1.
\end{align*}
Then there exists an evolution family $\bigl\{\mathcal{U}_{v,p,\ell}(t,\tau)\bigr\}_{0\le \tau\le t\le T}$
of positive, bounded, linear maps that: 
\begin{itemize}
\item[(i)] satisfies
\begin{align}
\label{eq4.6}
&\|\mathcal{U}_{v,p,\ell}(t,\tau)\|_{X^p_{\ell}\to X^q_{\ell,s}} 
\le C_{v, p,q, T,s} (t-\tau)^{-s-\frac{d}{2\delta}(\frac{1}{p}-\frac{1}{q})},\\
\nonumber
&\tfrac{d}{2(1-\rho)\delta}<p\le q<\infty,
\quad 0\le s \le 1 - \tfrac{d}{2\delta}\bigl(\tfrac{1}{p}-\tfrac{1}{q}\bigr),
\end{align}
with some $C_{v, p,q, T,s}\ge 1$ that depends on $v$, $p$, $q$, $T$, $s$, $\ell$, $\kappa(\alpha)$ and 
$\kappa_\delta(\alpha, \beta)$ only;
\item[(ii)] solves \eqref{eq4.5} in the classical sense, i.e. 
\begin{align*}
\mathcal{U}_{v,p,\ell}(\cdot,\tau)u_0 
&\in 
C\bigl([\tau, T], X^p_\ell\bigr)
\cap
C^1\bigl((\tau, T], X^p_\ell \bigr)
\cap 
C\bigl((\tau, T], D(\mathcal{T}_{p,\ell})\bigr)
\end{align*}
and $u(\cdot) := \mathcal{U}_{v,p,\ell}(\cdot,\tau)u_0$ satisfies \eqref{eq4.5} in $X^p_\ell$. 
\end{itemize}
\end{lemma}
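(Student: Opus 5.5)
We treat \eqref{eq4.5} as a time-dependent perturbation of the analytic semigroup $\{e^{t\mathcal{L}_{p,\ell}}\}_{t\ge0}$ of Theorem~\ref{lm2.1}. We construct $\mathcal{U}_{v,p,\ell}$ through the Duhamel (Volterra) equation
\[
\mathcal{U}(t,\tau)u_0 = e^{(t-\tau)\mathcal{L}_{p,\ell}}u_0 + \int_\tau^t e^{(t-r)\mathcal{L}_{p,\ell}}\mathcal{E}_{v(r)}\,\mathcal{U}(r,\tau)u_0\,dr,
\]
and solve it by a Neumann (Dyson--Phillips) series. Lemma~\ref{lm4.3} and the hypercontractivity \eqref{eq2.7} provide the singular kernel bounds.

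\textbf{Step 1: mapping property of the multiplier.} By H\"older's inequality in $x$ (with $v(r)\in L^{2p}$) and $(1+\underline{\beta})^\rho\le c\,\underline{\beta}^\rho$ (using $\underline{\beta}\ge\beta_0$), we get
\[
\|\mathcal{E}_{v(r)}u\|_{X^{r_1}_\ell}\le c\|v(r)\|_{L^{2p}}\|u\|_{X^{q}_{\ell,\rho}},\qquad \tfrac1{r_1}=\tfrac1{2p}+\tfrac1q .
\]
We pick $q=2p$, so that $r_1=p$. Then \eqref{eq2.7} gives $\|e^{(t-r)\mathcal{L}_{p,\ell}}\mathcal{E}_{v(r)}\|_{X^{2p}_{\ell,\rho}\to X^{2p}_{\ell,\rho}}\lesssim (t-r)^{-\rho-\frac{d}{4p\delta}}$. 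This exponent is $<1$ exactly when $p>\frac{d}{4(1-\rho)\delta}$, which is the standing assumption. We therefore iterate in the weighted space
\[
\sup_{\tau<t\le T}(t-\tau)^{\rho+\frac{d}{4p\delta}}\|\mathcal{U}(t,\tau)u_0\|_{X^{2p}_{\ell,\rho}},
\]
in the spirit of $Z^p_{\ell,\rho}$. The beta-function bounds on the iterated kernels give a convergent series with factorial decay, exactly as in the Henry/Pazy treatment of singular Volterra equations. After that we recover the $X^p_\ell$ bounds and the general $X^q_{\ell,s}$ bounds \eqref{eq4.6} by inserting the constructed solution back into the Duhamel formula and using \eqref{eq2.7} with the required $(q,s)$. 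Positivity follows because we may equivalently write the equation for $\tilde{\mathcal{L}}=\mathcal{L}_{p,\ell}-M$ with a large constant shift replaced by a cutoff. The cleaner route is to note that $\mathcal{E}_v\le0$, so positivity needs an alternative: we solve instead with $v$ truncated at level $N$, $v_N=v\wedge N$. In that case $\mathcal{E}_{v_N}+N(1+\underline{\beta})^\rho$ is positive and relatively bounded, with $(1+\underline{\beta})^\rho$ having relative bound zero with respect to $\mathcal{T}$ by \eqref{eq3.7f}. The Dyson series then has positive terms, and we let $N\to\infty$ using the stability of the Volterra construction in $v$.

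\textbf{Step 2: classical solvability (ii).} Here we use H\"older continuity of $v$. We would verify that $r\mapsto \mathcal{E}_{v(r)}\mathcal{U}(r,\tau)u_0$ is locally H\"older continuous on $(\tau,T]$ with values in $X^p_\ell$. This follows from the $C^\mu$ regularity of $v$ and the H\"older regularity in $t$ of $\mathcal{U}(\cdot,\tau)u_0$ in $X^{2p}_{\ell,\rho}$, which itself comes from analyticity of the semigroup applied to the Duhamel formula. The standard result on inhomogeneous problems for analytic semigroups (Pazy, Thm.~4.3.5 / Lunardi) then yields $\mathcal{U}(\cdot,\tau)u_0\in C^1((\tau,T],X^p_\ell)\cap C((\tau,T],D(\mathcal{T}_{p,\ell}))$, and the equation holds. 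Continuity at $t=\tau$ in $X^p_\ell$ follows from strong continuity of $e^{t\mathcal{L}_{p,\ell}}$ together with the integrable singularity of the integral term. The evolution property $\mathcal{U}(t,r)\mathcal{U}(r,\tau)=\mathcal{U}(t,\tau)$ follows from uniqueness for the Volterra equation.

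\textbf{Main obstacle.} We expect the hardest part to be the H\"older-in-time estimate of $\mathcal{E}_{v}\mathcal{U}$ near the singular endpoint, with the exponent bookkeeping $\rho+\frac{d}{4p\delta}<1$ needed both for integrability and for the H\"older step. Our first attempt would be to estimate $\|(e^{h\mathcal{L}}-I)e^{(t-\tau)\mathcal{L}}\|_{X^p_\ell\to X^{2p}_{\ell,\rho}}\lesssim h^{\epsilon}(t-\tau)^{-\epsilon-\rho-\frac{d}{4p\delta}}$ for small $\epsilon>0$, which we obtain by interpolating \eqref{eq2.7} with analyticity.
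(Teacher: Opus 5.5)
Your route differs from the paper's. The paper freezes $t$ and uses \eqref{eq4.7}, the Arendt--Rhandi theorem and a Lie--Trotter argument to show that each $\mathcal{L}_{p,\ell}+\mathcal{E}_{v,p,\ell}(t)$ generates a positive analytic semigroup on the common domain $D(\mathcal{T}_{p,\ell})$. It then notes that $t\mapsto\mathcal{L}_{p,\ell}+\mathcal{E}_{v,p,\ell}(t)$ is $C^\mu$ into $\mathcal{L}(D(\mathcal{T}_{p,\ell}),X^p_\ell)$, and quotes the constant-domain parabolic theory (Amann, Lunardi) for existence, regularity and positivity of $\mathcal{U}_{v,p,\ell}$.

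You instead perturb the single semigroup $e^{t\mathcal{L}_{p,\ell}}$ by a Dyson--Phillips series with integrable kernel $(t-r)^{-a}$, where $a:=\rho+\frac{d}{4p\delta}<1$, and prove regularity by hand via Pazy's theorem for H\"older forcing. This is more self-contained, and the construction needs only continuity of $v$. The construction, your H\"older estimate for $f(r):=\mathcal{E}_{v(r)}\mathcal{U}(r,\tau)u_0$, and hence (ii) go through as you describe.

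The truncation argument for positivity also works, but it still owes two routine facts:
\begin{itemize}
\item[(1)] $e^{t(\mathcal{L}_{p,\ell}-N(1+\underline{\beta})^\rho)}$ is positive. This holds because $N(1+\underline{\beta})^\rho$ depends on $\xi$ only, so it can be absorbed into the loss rate; the positive Neumann series of Theorem~\ref{lm2.1} then applies.
\item[(2)] Your two Dyson series for $v_N$ define the same family, which follows from uniqueness of mild solutions in a common weighted class.
\end{itemize}

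\textbf{The genuine gap is in (i), at the endpoint.} The bound \eqref{eq4.6} explicitly includes $s=1-\frac{d}{2\delta}(\frac1p-\frac1q)$, for example $q=p$, $s=1$. There the exponent $b:=s+\frac{d}{2\delta}(\frac1p-\frac1q)$ equals $1$. Inserting the solution into the Duhamel formula then produces $\int_\tau^t(t-r)^{-1}(r-\tau)^{-a}\,dr=\infty$, so your recovery step gives \eqref{eq4.6} only for $b<1$.

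What is missing is the bound $\|\mathcal{U}_{v,p,\ell}(t,\tau)\|_{X^p_\ell\to D(\mathcal{T}_{p,\ell})}\le C(t-\tau)^{-1}$. The paper takes this from Lunardi and combines it with the embedding $D(\mathcal{T}_{p,\ell})\hookrightarrow X^q_{\ell,s}$ from \eqref{eq3.7f} and the interpolation identity \eqref{eq3.13}.

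In your framework you can obtain it by making Step 2 quantitative. For $0<\epsilon\le\mu$ with $a+\epsilon<1$, your own estimates give $\|f(t)-f(r)\|_{X^p_\ell}\le C(t-r)^{\epsilon}(r-\tau)^{-a-\epsilon}\|u_0\|_{X^p_\ell}$. Then
\[
\mathcal{L}_{p,\ell}\int_\tau^t e^{(t-r)\mathcal{L}_{p,\ell}}f(r)\,dr=\int_\tau^t\mathcal{L}_{p,\ell}e^{(t-r)\mathcal{L}_{p,\ell}}\bigl[f(r)-f(t)\bigr]dr+\bigl(e^{(t-\tau)\mathcal{L}_{p,\ell}}-I\bigr)f(t)
\]
is $O\bigl((t-\tau)^{-a}\bigr)$ in $X^p_\ell$. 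By analyticity, $\|\mathcal{L}_{p,\ell}e^{(t-\tau)\mathcal{L}_{p,\ell}}u_0\|_{X^p_\ell}\le C(t-\tau)^{-1}\|u_0\|_{X^p_\ell}$. Together these give the $(t-\tau)^{-1}$ bound into $D(\mathcal{T}_{p,\ell})$, and \eqref{eq3.7f} then covers the endpoint.
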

\begin{proof}
(a) Let $p$ and $v$ be as above and let some $0\le t\le T$ be fixed. First, 
we note that the maximal $X^p_\ell$ realization of $\mathcal{E}_{v}(t)$,
\[
\bigl(\mathcal{E}_{v,p,\ell}(t), D\bigl(\mathcal{E}_{v,p,\ell}(t)\bigr)\bigr) ,\quad 
D\bigl(\mathcal{E}_{v,p,\ell}(t)\bigr) := \bigl\{u\in X^p_\ell\,\bigr|\, 
[(1+\underline{\beta})^\rho v(t)] u\in X^p_{\ell}\bigr\},
\]
satisfies $D(\mathcal{T}_{p,\ell})\hookrightarrow X^p_\ell\cap X^{2p}_{\ell,\rho} 
\hookrightarrow D\bigl(\mathcal{E}_{v,p,\ell}(t)\bigr)$. 
This follows directly from H\"older's inequality and \eqref{eq2.7},
for if $\lambda>\omega_{p,\ell}>0$ is large,
\begin{align}
\nonumber
&\bigl\|\mathcal{E}_{v,p,\ell}(t)(\lambda - \mathcal{L}_{p,\ell})^{-1} \bigr\|_{X^p_\ell\to X^p_\ell}
\le \|v(t)\|_{L^{2p}(\mathbb{R}^d)} 
\bigl\|(\lambda - \mathcal{L}_{p,\ell})^{-1}\bigr\|_{X^p_{\ell}\to X^{2p}_{\ell,\rho}}\\
\nonumber
&\qquad\qquad
\le C_{p,q,\rho}\|v(t)\|_{L^{2p}(\mathbb{R}^d)} 
\int_{\mathbb{R}^+} e^{-(\lambda-\omega_{p,\ell})t}t^{-\rho-\frac{d}{4p\delta}}dt\\
\label{eq4.7}
&\qquad\qquad
= \Gamma\bigl(1-\rho-\tfrac{d}{4p\delta}\bigr) C_{p,q,\rho}\|v(t)\|_{L^{2p}(\mathbb{R}^d)}
(\lambda-\omega_{p,\ell})^{-1+\rho+\tfrac{d}{4p\delta}}. 
\end{align}
It follows from \eqref{eq4.7} that 
$\bigl(\mathcal{L}_{p,\ell} \pm \mathcal{E}_{v,p,\ell}(t), D(\mathcal{T}_{p,\ell})\bigr)$ is closed and 
that for large values of $\lambda>\omega_{p,\ell}$, the resolvent
$\bigl(\lambda - \mathcal{L}_{p,\ell} + \mathcal{E}_{v,p,\ell}(t)\bigr)^{-1}\subset \mathcal{L}(X^p_\ell)$
is given by a positive and convergent Neumann series. Since 
$\bigl(\mathcal{L}_{p,\ell},D(\mathcal{T}_{p,\ell})\bigr)$ itself generates a positive analytic 
$C_0$-semigroup and since $-\mathcal{E}_{v,p,\ell}(t)$ is positive 
in $X^p_{\ell,+}\cap D\bigl(\mathcal{E}_{v,p,\ell}(t)\bigr)$, 
\cite[Theorem 1.1]{ArRh1991} implies that either of the operators
$\bigr(\mathcal{L}_{p,\ell}\pm \mathcal{E}_{v,p,\ell}(t),D(\mathcal{T}_{p,\ell})\bigr)$ generate 
an analytic $C_0$-semigroup in $X^p_\ell$.

To show that the semigroup $\bigl\{e^{\tau(\mathcal{L}_{p,\ell}+\mathcal{E}_{v,p,\ell}(t))}\bigr\}_{\tau\ge 0}$
is positive, we note that $\bigl(\mathcal{E}_{v,p,\ell}(t), D\bigl(\mathcal{E}_{v,p,\ell}(t)\bigr)\bigr) $ is sectorial, 
with $\sigma\bigl(\mathcal{E}_{v,p,\ell}(t)\bigr) = \bar{\mathbb{R}}_-$, and that
the associated semigroup $\bigl\{e^{\tau \mathcal{E}_{v,p,\ell}(t)}\bigl\}_{\tau\ge 0}$
is positive, strongly continuous and analytic in $X^p_\ell$. 
It is obvious that $|e^{\tau \mathcal{E}_{v,p,\ell}(t)} u| \le |u|$. Therefore, 
the positivity of $\bigl\{e^{\tau \mathcal{L}_{p,\ell}}\bigl\}_{\tau\ge 0}$, yields the point-wise bound
\[
\Bigl|\bigl[e^{\frac{\tau}{n}\mathcal{L}_{p,\ell}} e^{\frac{\tau}{n}\mathcal{E}_{v,p,\ell}(t)}\bigr]^n u\bigl| 
\le e^{\tau\mathcal{L}_{p,\ell}} |u|,\quad u\in X^p_\ell,\quad \tau\ge 0
\]
and, in particular, $\mathcal{L}(X^p_\ell)$-valued products 
$V_n(\tau) :=\bigl[e^{\frac{\tau}{n}\mathcal{L}_{p,\ell}} e^{\frac{\tau}{n}\mathcal{E}_{v,p,\ell}(t)}\bigr]^n$, 
$n\ge 0$, are positive and uniformly bounded on compact subintervals of $\bar{\mathbb{R}}_+$. 
By virtue of the classical Lie-Trotter approximation formula (see e.g. 
\cite[Corollary~5.8, p. 227]{EnNa2000}) and by the closeness on the positive cone 
$X^p_{\ell,+}$, $\bigl\{V_n(\cdot)\bigr\}_{n\ge 1}$ converges on compact subintervals 
of $\bar{\mathbb{R}}_+$ to a positive $C_0$-semigroup, 
generated by $\overline{\bigl(\mathcal{L}_{p,\ell} + \mathcal{E}_{v,p,\ell}(t), D(\mathcal{T}_{p,\ell})\bigr)} 
= \bigl(\mathcal{L}_{p,\ell} + \mathcal{E}_{v,p,\ell}(t), D(\mathcal{T}_{p,\ell})\bigr)$.

(b) Since $v$ is assumed to be H\"older continuous, it follows from \eqref{eq4.7} that
$\mathcal{L}_{p,\ell} + \mathcal{E}_{v,p,\ell}(\cdot)\in 
C^\mu\bigl([0,T], \mathcal{L}\bigl(D(\mathcal{T}_{p,\ell}), X^p_\ell\bigr)\bigr)$.
The last inclusion and part (a) of the proof are sufficient 
(see e.g. \cite[Corollary~4.4.2, p. 66]{Am2011} or 
\cite[Section~6.1, pp. 212-228]{Lun1995}) to guarantee
existence of the evolution operator $\mathcal{U}_{v,p,\ell}$ asserted in the Lemma.
Its positivity follows from the positivity of individual semigroups 
$\bigl\{e^{\tau(\mathcal{L}_{p,\ell}+\mathcal{E}_{v,p,\ell}(t))}\bigr\}_{\tau\ge 0}$, $0\le t\le T$, 
see e.g. \cite[Theorem~6.4.3, pp. 83-84]{Am2011}. 
Finally, \eqref{eq4.6} is the consequence of the embedding 
$D(\mathcal{T}_{p,\ell})\hookrightarrow X^{q}_{\ell,s}$, see \eqref{eq3.7f}; the standard bound 
\[
\|\mathcal{U}_{p,v,\ell}(t,\tau)\|_{X^p_\ell\to D(\mathcal{T}_{p,\ell})} \le C_{v,p,q, T}(t-\tau)^{-1}, 
\quad 0\le\tau<t\le T,
\]
see e.g. \cite[Corollary~6.1.8, pp. 219-222]{Lun1995};
and the interpolation identity \eqref{eq3.13}, mentioned in the proof of Theorem~\ref{lm2.1}.
\end{proof} 

In subsequent calculations, we make a consistent use of the fractional/sin\-gular version 
of Gronwall's inequality, see e.g.
\cite[Theorem 3.3.1, p. 52]{Am2011} or \cite[Exercise 3, p. 190]{Hen1993}.
\begin{lemma}\label{lm4.6}
Assume $f\in L^\infty_{\text{loc},+}(\mathbb{R}_+)$ satisfies
\begin{subequations}\label{eq4.8}
\begin{equation}\label{eq4.8a}
f(t) \le a t^{-\mu} + b\int_0^t(t-\tau)^{-\nu} f(\tau) d\tau,\quad\text{a.e. in $\mathbb{R}_+$},
\end{equation}
with $a,b>0$ and $0\le \mu,\nu<1$. Then 
\begin{equation}\label{eq4.8b}
f(t) \le c_0 a t^{-\mu} e^{c_1 t},\quad\text{a.e. in $\mathbb{R}_+$}, 
\end{equation}
\end{subequations}
where $c_0,c_1> 0$ depend on the parameters $b$, $\mu$ and $\nu$ of \eqref{eq4.8a}.
\end{lemma}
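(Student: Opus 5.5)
The plan is to iterate the integral inequality \eqref{eq4.8a} and control the resulting Volterra series by Beta-function identities; this is the standard Henry-type argument. Since $f\ge 0$ and $b>0$, we may write $f \le a t^{-\mu} + b\,K f$, where $K$ is the positive Volterra operator
\[
[Kg](t) := \int_0^t (t-\tau)^{-\nu} g(\tau)\,d\tau .
\]
Positivity of $K$ lets us iterate: for every $n\ge 1$,
\[
f \le a\sum_{k=0}^{n-1} b^k K^k[\tau^{-\mu}] + b^n K^n f .
\]
The key computation is the closed form of $K^k[\tau^{-\mu}]$, obtained from the Beta integral
\[
\int_0^t (t-\tau)^{-\nu}\tau^{\gamma-1}\,d\tau = B(1-\nu,\gamma)\, t^{\gamma-\nu}, \qquad \gamma>0 .
\]
By induction,
\[
K^k[\tau^{-\mu}](t) = \frac{\Gamma(1-\mu)\,\Gamma(1-\nu)^k}{\Gamma\bigl(1-\mu+k(1-\nu)\bigr)}\, t^{-\mu+k(1-\nu)} ,
\]
since the products of Beta functions telescope.

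Next I show that the remainder $b^n K^n f \to 0$ a.e. on each bounded interval $(0,T)$. Because $f\in L^\infty_{\text{loc}}$, we have $f\le M_T$ a.e. on $(0,T)$, and the same Gamma formula with $\mu=0$ gives
\[
b^n K^n f \le M_T\, \frac{\bigl(b\Gamma(1-\nu)\bigr)^n\, T^{n(1-\nu)}}{\Gamma\bigl(1+n(1-\nu)\bigr)} .
\]
This tends to zero by Stirling's formula, since the Gamma function grows superexponentially.

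Passing to the limit yields
\[
f(t) \le a\,\Gamma(1-\mu)\, t^{-\mu} \sum_{k\ge0} \frac{\bigl(b\Gamma(1-\nu)\, t^{1-\nu}\bigr)^k}{\Gamma\bigl(1-\mu+k(1-\nu)\bigr)} ,
\]
which is a Mittag-Leffler-type function $E_{1-\nu,1-\mu}\bigl(b\Gamma(1-\nu)t^{1-\nu}\bigr)$.

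The remaining step, which I expect to be the only genuine obstacle (it is elementary but requires care), is to bound this series by $c_0 e^{c_1 t}$ with constants depending only on $b,\mu,\nu$. I would use the known asymptotics
\[
E_{\alpha,\beta}(z) \sim \alpha^{-1} z^{(1-\beta)/\alpha} e^{z^{1/\alpha}}, \qquad z\to\infty ,
\]
with $\alpha=1-\nu$ and $\beta=1-\mu$. Alternatively, one can compare the terms directly: group indices $k$ so that $k(1-\nu)$ lies in $[m,m+1)$, use the monotonicity of $\Gamma$ on $[2,\infty)$, and dominate each group by a constant times $x^m/m!$ with $x = \bigl(b\Gamma(1-\nu)\bigr)^{1/(1-\nu)}t$, up to a factor $(1+t)^{\text{const}}$. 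Either route gives $\le c_0 e^{c_1 t}$ with, for example,
\[
c_1 = \bigl(b\Gamma(1-\nu)\bigr)^{1/(1-\nu)} + 1 ,
\]
where the extra $+1$ absorbs the polynomial prefactor. The factor $t^{-\mu}$ is already displayed, so this is exactly \eqref{eq4.8b}. Since the only uses of the hypothesis are \eqref{eq4.8a} a.e. and local boundedness, all the inequalities above hold a.e.

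Alternatively, the result can be cited directly from \cite[Theorem 3.3.1]{Am2011}; the argument above is simply its proof.
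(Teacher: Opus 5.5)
There is one genuine gap, and it is easy to repair. Your argument (the Neumann-series iteration of the positive Volterra operator $K$, the telescoping Beta integrals, and the Mittag-Leffler growth bound) is the standard proof behind the references the paper cites instead of giving a proof. Your closed form for $K^k[\tau^{-\mu}]$, the series bound, and the dependence of $c_0,c_1$ on $b,\mu,\nu$ alone are correct.

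The gap is in the remainder estimate, which uses $f\le M_T$ a.e.\ on $(0,T)$. In this paper $\mathbb{R}_+=(0,\infty)$, so $f\in L^\infty_{\text{loc}}(\mathbb{R}_+)$ only gives boundedness on compact subsets of $(0,\infty)$. The paper also really needs the unbounded case. In part (a) of the proof of Theorem~\ref{lm2.3} the lemma is applied to $f(t)=\|u(t)\|_{X^p_{\ell+\rho\theta_\beta}}$, which for data $u_0\in Y^p_\ell$ is only known to be $o(t^{-\rho})$ as $t\to0^+$. The function $v$ in \eqref{eq4.18} likewise blows up at $0$. Some integrability at $0$ must in fact be assumed for the statement to hold at all. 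For $f(t)=t^{-1}$ the integral in \eqref{eq4.8a} is infinite, so the hypothesis holds vacuously, yet \eqref{eq4.8b} fails as $t\to0^+$.

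To repair it, assume $f\in L^1(0,T)$ for every $T>0$. This is the usual hypothesis in the singular Gronwall lemma, and it holds in every application in the paper, since there $f=O(t^{-\rho})$ with $\rho<1$. All iterates $K^jf$ then lie in $L^1(0,T)$ by Young's inequality, so the iteration is legitimate. By the same Beta identity, $K^n$ has the kernel
\[
k_n(t-s)=\frac{\Gamma(1-\nu)^n}{\Gamma\bigl(n(1-\nu)\bigr)}\,(t-s)^{n(1-\nu)-1}.
\]
Hence, for $n(1-\nu)\ge 1$ and $0<t\le T$,
\[
b^nK^nf(t)\le\frac{\bigl(b\Gamma(1-\nu)\bigr)^n\,T^{n(1-\nu)-1}}{\Gamma\bigl(n(1-\nu)\bigr)}\,\|f\|_{L^1(0,T)}\longrightarrow 0 .
\]
Alternatively, if $t^{\mu'}f(t)\le M$ on $(0,T)$ for some $\mu'<1$, compare $b^nK^nf$ with $M\,b^nK^n[\tau^{-\mu'}]$ using your closed form. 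With this replacement the rest of your proof goes through unchanged, and neither $M_T$ nor $\|f\|_{L^1(0,T)}$ enters $c_0$ or $c_1$.
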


\subsection{Mild solutions}\label{sec4.2} 

In this Subsection, we establish existence of unique mild solutions 
to \eqref{eq1.1}, i.e. solutions to the Volterra integral equation
\begin{equation}\label{eq4.9}
u(t) = e^{t\mathcal{L}_{p,\ell}}u_0 + \int_0^t e^{(t-\tau)\mathcal{L}_{p,\ell}} 
\mathcal{C}\bigl(u(\tau),u(\tau)\bigr)d\tau, \quad u_0\in Y^p_{\ell},
\end{equation} 
for sufficiently large $\ell$ and $1<p<\infty$.
In connection with our choice $Y^p_{\ell}$ for the phase space , we remark that the coagulation
operator $\mathcal{C}(u,v)$, defined in \eqref{eq1.1c}, is locally Lipschitz continuous 
from $X^{qp}_{\ell,\rho}\times X^{q'p}_{\ell,\rho}$ to $X^p_{\ell}$, $1\le p,q\le \infty$, 
and hence local solvability of \eqref{eq4.9} relies on the hyper-contractivity 
(estimate \eqref{eq2.7}) of the semigroups $\bigl\{e^{t\mathcal{L}_{p,\ell}}\bigr\}_{t\ge 0}$.  
In particular, $X^1_{\ell}$-data requires $X^\infty_{\ell,\rho}$ estimates, which in turn 
require $1\le d < 2(1-\rho)\delta$. The last condition is very restrictive, and 
we are forced to impose stronger integrability assumptions on the input data.  

\begin{proof}[Proof of Theorem~\ref{lm2.2}(i)]
(a) For $u_0\in Y^p_{\ell}$ and $0<T$, we let 
$v_0(t) = e^{t\mathcal{L}_{p,\ell}} u_0$, $0\le t\le T$, and observe that on the account of 
Theorem~\ref{lm2.1} and Lemma~\ref{lm4.3},
$v_0\in Z^p_{\ell,\rho}(T)$. We define further
\begin{align*}
&B_T(u_0) := \bigl\{v\in Z^p_{\ell,\rho}(T)\,\bigr|\, \|v-v_0\|_{Z^p_{\ell,\rho}(T)}\le 1\bigr\},\\
&\mathcal{F}[v](t) := v_0(t) + \int_{0}^t e^{(t-\tau)\mathcal{L}_{p,\ell}} 
\mathcal{C}\bigl(v(\tau),v(\tau)\bigr)d\tau,\quad v\in Z^{p}_{\ell,s}(T).
\end{align*}

(b) In view of \eqref{eq2.7}, Lemmas~\ref{lm4.2}-\ref{lm4.4} and  after some elementary 
calculations, we infer
\begin{align*}
&t^{j\rho} \bigl\|\mathcal{F}[v](t) - v_0(t)\bigr\|_{X^{p}_{\ell,j\rho}} 
\le t^{j\rho}\int_0^t \|e^{(t-\tau)\mathcal{L}_{\frac{p}{2},\ell}}\|_{X^{\frac{p}{2}}_\ell \to X^{p}_{\ell,j\rho}} 
\bigl\|\mathcal{C}\bigl(v(\tau),v(\tau)\bigr)\bigr\|_{X^{\frac{p}{2}}_\ell}d\tau\\
&\qquad
\le c_1' t^{j\rho} \int_0^t (t-\tau)^{-j\rho - \tfrac{d}{2p\delta}}\tau^{-\rho} e^{\omega_{\frac{p}{2},\ell} (t-\tau)} 
\bigl\|v(\tau)\bigr\|_{X^p_{\ell}} \bigl[\tau^\rho \bigl\|v(\tau)\bigr\|_{X^p_{\ell,\rho}}\bigr] d\tau\\
&\qquad
\le c_1 e^{\omega_{\frac{p}{2},\ell} t} t^{1-\rho-\tfrac{d}{2p\delta}} \bigl(\|u_0\|_{Y^p_{\ell}}+1\bigr)^2,
\quad j=0,1,
\end{align*}
where the constant $c_1\ge 1$ depends on $p$, $\ell$, $\rho$, $\kappa(\alpha)$, 
$\kappa_\delta(\alpha,\beta)$ and $c_\varkappa$ only. 
Similarly, using \eqref{eq2.7}, Lemmas~\ref{lm4.2}-\ref{lm4.4} and the elementary bound
\[
\|u\|_{X^2_{\ell,j\rho}} \le \|u\|_{X^1_{\ell,j\rho}}^{1-\frac{p'}{2}}\|u\|_{X^p_{\ell,j\rho}}^{\frac{p'}{2}},
\quad 2\le p<\infty,\quad j=0,1,
\] 
we obtain
\begin{align*}
&t^{j\rho}\bigl\|\mathcal{F}[v](t) - v_0(t)\bigr\|_{X^{1}_{\ell,j\rho}} 
\le t^{j\rho} \int_0^t \|e^{(t-\tau)\mathcal{L}_{1,\ell}}\|_{X^{1}_\ell \to X^{1}_{\ell,j\rho}} 
\bigl\|\mathcal{C}\bigl(v(\tau),v(\tau)\bigr)\bigr\|_{X^{1}_\ell}d\tau\\
&\qquad
\le c_2' t^{j\rho} 
\int_0^t (t-\tau)^{-j\rho} e^{\omega_{1,\ell} (t-\tau)} \bigl\|v(\tau)\bigr\|_{X^2_{\ell}} 
\bigl\|v(\tau)\bigr\|_{X^2_{\ell,\rho}} d\tau\\
&\qquad
\le c_2' t^{j\rho} \int_0^t (t-\tau)^{-j\rho} \tau^{-\rho} e^{\omega_{1,\ell} (t-\tau)} 
\bigl\|v(\tau)\bigr\|^{1-\frac{p'}{2}}_{X^1_{\ell}} 
\bigl\|v(\tau)\bigr\|^{\frac{p'}{2}}_{X^p_{\ell}}\\
&\qquad\qquad\qquad\qquad\qquad\qquad\qquad\qquad
\bigl[\tau^\rho \bigl\|v(\tau)\bigr\|_{X^1_{\ell,\rho}}\bigr]^{1-\frac{p'}{2}}
\bigl[\tau^\rho \bigl\|v(\tau)\bigr\|_{X^p_{\ell,\rho}}\bigr]^{\frac{p'}{2}} d\tau\\
&\qquad
\le c_2 e^{\omega_{1,\ell} t} t^{1-\rho} \bigl(\|u_0\|_{Y^p_{\ell}}+1\bigr)^2,
\quad j=0,1,
\end{align*}
with the constant $c_2\ge 1$ controlled by the same quantities as $c_1$.
Finally, using \eqref{eq2.7}, Lemmas~\ref{lm4.2}-\ref{lm4.4} and the inequality
\[
\|u\|_{X^{\frac{4p}{3}}_{\ell,j\rho}} \le \|u\|_{X^p_{\ell,j\rho}}^{\frac{1}{2}}\|u\|_{X^{2p}_{\ell,j\rho}}^{\frac{1}{2}},
\quad 1\le p<\infty,\quad j=0,1,
\] 
we have
\begin{align*}
&t^{j\rho+\frac{d}{4p\delta}} \bigl\|\mathcal{F}[v](t) - v_0(t)\bigr\|_{X^{2p}_{\ell,j\rho}} \\
&\qquad
\le t^{j\rho+\frac{d}{4p\delta}} 
\int_0^t \|e^{(t-\tau)\mathcal{L}_{\frac{2p}{3},\ell}}\|_{X^{\frac{2p}{3}}_\ell \to X^{2p}_{\ell,j\rho}} 
\bigl\|\mathcal{C}\bigl(v(\tau),v(\tau)\bigr)\bigr\|_{X^{\frac{2p}{3}}_\ell}d\tau\\
&\qquad
\le c_3' t^{j\rho+\frac{d}{4p\delta}} 
\int_0^t (t-\tau)^{-j\rho - \frac{d}{2p\delta}} e^{\omega_{\frac{2p}{3},\ell} (t-\tau)} 
\bigl\|v(\tau)\bigr\|_{X^{\frac{4p}{3}}_{\ell}} \bigl\|v(\tau)\bigr\|_{X^{\frac{4p}{3}}_{\ell,\rho}} d\tau\\
&\qquad
\le c_3' t^{j\rho+\frac{d}{4p\delta}}
\int_0^t (t-\tau)^{-j\rho - \frac{d}{2p\delta}} \tau^{-\rho-\tfrac{d}{4p\delta}} e^{\omega_{\frac{2p}{3},\ell} 
(t-\tau)} \bigl\|v(\tau)\bigr\|^{\frac{1}{2}}_{X^p_{\ell}} 
\bigl[\tau^\rho \bigl\|v(\tau)\bigr\|_{X^p_{\ell,\rho}}\bigr]^{\frac{1}{2}}\\
&\qquad\qquad\qquad\qquad\qquad\qquad\qquad\qquad
\bigl[\tau^{\frac{d}{4p\delta}} \bigl\|v(\tau)\bigr\|_{X^{2p}_{\ell}}\bigl]^{\frac{1}{2}}
\bigl[\tau^{\rho+\tfrac{d}{4p\delta}} \bigl\|v(\tau)\bigr\|_{X^{2p}_{\ell,\rho}}\bigr]^{\frac{1}{2}} d\tau\\
&\qquad
\le c_3 e^{\omega_{\frac{2p}{3},\ell} t} t^{1-\rho-\frac{d}{2p\delta}} \bigl(\|u_0\|_{Y^p_{\ell}}+1\bigr)^2,
\quad j=0,1,
\end{align*}
with the constant $c_3\ge 1$ controlled by the same quantities as $c_1$ and $c_2$.
The estimates above indicate that  
$\mathcal{F}[B_T(u_0)]\subset B_T(u_0)$, provided $T \le 1\wedge T_1\wedge T_2$, 
where
\begin{align*}
&T_1:=\bigl[4 (c_1\vee c_3) e^{\omega_{\frac{p}{2},\ell}\vee \omega_{\frac{2p}{3},\ell}} 
\bigl(\|u_0\|_{Y^p_{\ell}}+1\bigr)^2\bigr]^{-\frac{2p\delta}{2p\delta(1-\rho)-d}},\\
&T_2:=\bigl[4 c_2 e^{\omega_{1,\ell}} 
\bigl(\|u_0\|_{Y^p_{\ell}}+1\bigr)^2\bigr]^{-\frac{1}{1-\rho}}.
\end{align*}

(c) When $v_1, v_2\in B_T(u_0)$, we have also
\begin{align*}
&t^{j\rho}\bigl\|\mathcal{F}[v_1](t) - F[v_2](t)\bigr\|_{X^{p}_{\ell,j\rho}} \\
&\qquad
\le t^{j\rho}\int_0^t \|e^{(t-\tau)\mathcal{L}_{\frac{p}{2},\ell}}\|_{X^{\frac{p}{2}}_\ell \to X^{p}_{\ell,j\rho}} 
\bigl\|\mathcal{C}\bigl(v_1(\tau)-v_2(\tau), v_1(\tau)+v_2(\tau)\bigr)\bigr\|_{X^{\frac{p}{2}}_\ell}d\tau\\
&\qquad
\le c_4' \tau^{j\rho} \int_0^t (t-\tau)^{-j\rho - \tfrac{d}{2p\delta}}\tau^{-\rho} 
e^{\omega_{\frac{p}{2},\ell} (t-\tau)}\\ 
&\qquad\qquad\qquad\qquad \qquad 
\Bigl[
\bigl[\tau^\rho \bigl\|v_1(\tau)-v_2(\tau)\bigr\|_{X^p_{\ell,\rho}}\bigr] 
\bigl\|v_1(\tau)+v_2(\tau)\bigr\|_{X^p_{\ell}} \\
&\qquad\qquad\qquad\qquad\qquad 
+\bigl\|v_1(\tau)-v_2(\tau)\bigr\|_{X^p_{\ell}} 
\bigl[\tau^\rho \bigl\|v_1(\tau)+v_2(\tau)\bigr\|_{X^p_{\ell,\rho}}\bigr]
\Bigr]d\tau\\
&\qquad
\le c_4 e^{\omega_{\frac{p}{2},\ell} t} t^{1-\rho-\tfrac{d}{2p\delta}} \bigl(\|u_0\|_{Y^p_{\ell}}+1\bigr)
\|v_1-v_2\|_{Z^p_{\ell,\rho}(T)},\quad j=0,1,
\end{align*}
and then, after similar calculations,
\begin{align*}
&t^{j\rho}\bigl\|\mathcal{F}[v_1](t) - F[v_2](t)\bigr\|_{X^{1}_{\ell,j\rho}} \\
&\qquad
\le c_5 e^{\omega_{1,\ell} t} t^{1-\rho} \bigl(\|u_0\|_{Y^p_{\ell}}+1\bigr)
\|v_1-v_2\|_{Z^p_{\ell,\rho}(T)},\quad j=0,1,\\
&t^{j\rho+\frac{d}{4p\delta}}\bigl\|\mathcal{F}[v_1](t) - F[v_2](t)\bigr\|_{X^{2p}_{\ell,j\rho}} \\
&\qquad
\le c_6 e^{\omega_{\frac{2p}{3},\ell} t} t^{1-\rho-\frac{d}{2p\delta}} \bigl(\|u_0\|_{Y^p_{\ell}}+1\bigr)
\|v_1-v_2\|_{Z^p_{\ell,\rho}(T)},\quad j=0,1.
\end{align*}
It follows that $\mathcal{F}$ is a strict contraction in $B_T(u_0)$, provided
$T< 1\wedge \Bigl(\bigwedge_{i=1}^4 T_i\Bigr)$, where 
\begin{align*}
&T_3:=\bigl[4(c_4\vee c_6) e^{\omega_{\frac{p}{2},\ell}\vee \omega_{\frac{3p}{2},\ell}} 
\bigl(\|u_0\|_{Y^p_{\ell}}+1\bigr)\bigr]^{-\frac{2p\delta}{2p\delta(1-\rho)-d}},\\ 
&T_4:=\bigl[4 c_4 e^{\omega_{1,\ell}} 
\bigl(\|u_0\|_{Y^p_{\ell}}+1\bigr)\bigr]^{-\frac{1}{1-\rho}}.
\end{align*} 
Since $B_T(u_0)$ is closed in $Z^p_{\ell,\rho}(T)$ and the latter space is complete, 
the assertion follows from the classical Banach fixed point theorem.
\end{proof}

\begin{remark}\label{lm4.7}
To conclude this section, we note that in view of \eqref{eq3.7f} and \eqref{eq2.8a}, 
\[
D(\mathcal{T}_{p,\ell})\hookrightarrow X^p_{\ell,\rho}\cap X^{2p}_{\ell,\rho}.
\]
As a consequence, for regular data $u_0\in D(\mathcal{T}_{1,\ell})\cap D(\mathcal{T}_{p,\ell})$, 
\[
e^{\cdot \mathcal{L}_{p,\ell}} u_0 \in C\bigl(\bar{\mathbb{R}}_+, D(\mathcal{T}_{1,\ell})
\cap D(\mathcal{T}_{p,\ell})\bigr)
\]
and the mild solution (solution to \eqref{eq4.9}) satisfies
\begin{equation}\label{eq4.10}
u\in C\bigl([0,T'], Y^p_\ell\cap Y^p_{\ell,\rho}\cap X^{2p}_\ell\cap X^{2p}_{\ell,\rho}\bigr),
\end{equation}
with some $0<T'\le T$.
\end{remark}

\subsection{Regularity}\label{sec4.3} 

Thanks to the special structure of the coagulation operator (see Lemma~\ref{lm4.4}) and
under stronger assumption $\ell>\bar{\ell}_0\vee\ell_{2p}$,
the inclusion \eqref{eq2.8b} yields H\"older's regularity of mild solutions.
\begin{lemma}\label{lm4.8}
Let $0<\varepsilon <T$ be fixed. If $\ell>\bar{\ell}_0\vee\ell_{2p}$,  
the local mild solutions of Theorem~\ref{lm2.2}(i) satisfy
\begin{align}\label{eq4.11}
&u\in C^{\mu}\bigl([\varepsilon, T], Y^{p}_{\ell,\rho}\cap  X^{2p}_{\ell,\rho}\bigr),\quad 
0< \mu <1-\rho.
\end{align}
\end{lemma}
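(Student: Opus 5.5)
We will prove Lemma~\ref{lm4.8} by the standard argument for Hölder regularity of mild solutions to semilinear parabolic problems, working with the Volterra equation \eqref{eq4.9}. Fix $0<\varepsilon\le t<t+h\le T$ and put $f(\tau):=\mathcal{C}\bigl(u(\tau),u(\tau)\bigr)$. The increment splits as
\begin{align*}
u(t+h)-u(t) &= \bigl(e^{h\mathcal{L}_{q,\ell}}-I\bigr)e^{t\mathcal{L}_{q,\ell}}u_0
+\int_0^t \bigl(e^{h\mathcal{L}_{q,\ell}}-I\bigr)e^{(t-\tau)\mathcal{L}_{q,\ell}}f(\tau)\,d\tau\\
&\quad+\int_t^{t+h} e^{(t+h-\tau)\mathcal{L}_{q,\ell}}f(\tau)\,d\tau
=: I_1+I_2+I_3 .
\end{align*}
Here $q\in\{1,p/2,2p/3\}$ is chosen as in the proof of Theorem~\ref{lm2.2}(i), according to the target space $X^1_{\ell,\rho}$, $X^p_{\ell,\rho}$ or $X^{2p}_{\ell,\rho}$. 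Lemma~\ref{lm4.2} lets us identify the semigroups across these exponents. The hypothesis $\ell>\bar{\ell}_0\vee\ell_{2p}$ guarantees that every semigroup $e^{t\mathcal{L}_{q,\ell}}$ with $q\le 2p$ is available (monotonicity of $\ell_q$ in $q$ should hold by the construction of $\ell_q$ through $\|\mathcal{M}_0\|_{L^{q'}\to L^{q'}}$). It also guarantees that the hyper-contractive bound \eqref{eq2.7} can be used up to the target space $X^{2p}$.

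The key tool is analyticity. For $0\le\theta\le1$, interpolate between $\|e^{h\mathcal{L}}-I\|\le C$ and $\|(e^{h\mathcal{L}}-I)(\lambda-\mathcal{L})^{-1}\|\le Ch$, using the domain embedding from \eqref{eq3.7f} together with the interpolation identity \eqref{eq3.13}. This gives
\[
\bigl\|(e^{h\mathcal{L}_{q,\ell}}-I)e^{s\mathcal{L}_{q,\ell}}\bigr\|_{X^q_\ell\to X^r_{\ell,\rho}}\le C\,h^{\mu}\,s^{-\mu-\rho-\frac{d}{2\delta}(\frac1q-\frac1r)},
\]
valid whenever the total exponent is below $1$ in the $\tau$-integral. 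We then estimate each term as follows.

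For $I_1$, take $s=t\ge\varepsilon$. This yields $Ch^\mu\varepsilon^{-\mu-\rho-\dots}\|u_0\|_{Y^p_\ell}$.

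For $I_3$, use \eqref{eq2.7} together with the bounds on $\|f(\tau)\|_{X^q_\ell}$ from Lemma~\ref{lm4.4}. On $[\varepsilon/2,T]$ the weights $\tau^{-\rho}$ and $\tau^{-\rho-d/(4p\delta)}$ coming from \eqref{eq2.8b} are bounded. This gives $\|I_3\|\le C\int_t^{t+h}(t+h-\tau)^{-\rho-\frac{d}{2p\delta}}d\tau\le Ch^{1-\rho-\frac{d}{2p\delta}}$, and the $X^1$ component gives $Ch^{1-\rho}$.

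For $I_2$, insert the inequality above with $s=t-\tau$. The $\tau$-integral is then bounded by
\[
Ch^\mu\int_0^t (t-\tau)^{-\mu-\rho-\frac{d}{2p\delta}}\,\tau^{-\rho}\,d\tau\cdot\sup\bigl(\text{weighted } Z^p_{\ell,\rho}\text{-norms of }u\bigr),
\]
which is finite when $\mu<1-\rho-\frac{d}{2p\delta}$ (respectively $\mu<1-\rho$ in the $X^1$ component). The singular weights near $\tau=0$ are integrable because $\rho<1$ and $\rho+\frac{d}{4p\delta}<1$ by \eqref{eq2.8a}.

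The main obstacle is that the resulting exponent in the $X^p$ and $X^{2p}$ components is $1-\rho-\frac{d}{2p\delta}$ rather than the claimed $1-\rho$. To recover the full range $0<\mu<1-\rho$, we first show Hölder continuity with some small exponent $\mu_0>0$. We then bootstrap. First, this gives $f\in C^{\mu_0}([\varepsilon,T],X^{q}_\ell)$ by bilinearity (Lemma~\ref{lm4.4}) and the uniform bounds on $[\varepsilon,T]$. Second, we restart the equation at time $\varepsilon/2$ and use the classical regularity for $\int e^{(t-\tau)\mathcal{L}}f\,d\tau$ with Hölder $f$ (as in \cite[Section~4.3]{Lun1995}). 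This yields $u-e^{(\cdot-\varepsilon/2)\mathcal{L}}u(\varepsilon/2)\in C^{1-\rho'}$ in the $\rho$-weighted spaces for any $\rho'>\rho$, because $X^q_\ell\hookrightarrow$ the interpolation space of order $0$ and the loss into $X^r_{\ell,\rho}$ costs only $\rho+\frac{d}{2\delta}(\frac1q-\frac1r)$. Third, wherever the integrability gap $\frac{d}{2\delta}(\frac1q-\frac1r)$ appears, we replace the source exponent $q$ by one closer to $r$. Concretely, we estimate $\mathcal{C}(u,u)$ in $X^{r}_\ell$ directly via Lemma~\ref{lm4.4} with $q_i$ chosen so that the $X^{2r}$ norms are controlled on $[\varepsilon,T]$ by Remark~\ref{lm4.7}-type regularisation (the solution at time $\varepsilon/2$ is regular). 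This removes the $d/(2p\delta)$ loss, and every $\mu<1-\rho$ becomes attainable.
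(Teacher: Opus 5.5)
\textbf{There is a genuine gap, in the bootstrap for the $X^{2p}_{\ell,\rho}$ component.}

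Your base step is sound. Splitting $u(t+h)-u(t)$ as you do and using $\|(e^{h\mathcal{L}}-I)e^{s\mathcal{L}}\|_{X^q_\ell\to X^r_{\ell,\rho}}\le Ch^\mu s^{-\mu-\rho-\frac{d}{2\delta}(\frac1q-\frac1r)}$ gives H\"older continuity on $[\varepsilon,T]$. As you note, though, it only covers $\mu<1-\rho-\frac{d}{2p\delta}$ in the $X^p$ and $X^{2p}$ components. The failure is in the bootstrap meant to reach the full range $\mu<1-\rho$.

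Your second step does not give $C^{1-\rho'}$ for every $\rho'>\rho$. By your own accounting the loss is $\rho+\frac{d}{2\delta}(\frac1q-\frac1r)$, so any source exponent $q<r$ brings back the restricted range. Your third step removes the loss only when the source exponent equals the target exponent. For the $X^1$ and $X^p$ components this is fine, since the required $X^2$ and $X^{2p}$ information is available. For the $X^{2p}_{\ell,\rho}$ component it requires $\mathcal{C}(u,u)$ to be controlled in $X^{2p}_\ell$. By Lemma~\ref{lm4.4}, that needs $u$ bounded in $X^{4p}_\ell\cap X^{4p}_{\ell,\rho}$ on $[\varepsilon/2,T]$; the Lunardi H\"older theory you invoke even needs $u$ H\"older continuous there. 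Nothing you cite supplies this:
\begin{itemize}
\item[(i)] $Z^p_{\ell,\rho}(T)$ in \eqref{eq2.8b} carries only $X^{2p}$ information.
\item[(ii)] Your base step only yields H\"older continuity in $X^1$, $X^p$ and $X^{2p}$.
\item[(iii)] Remark~\ref{lm4.7} concerns data in $D(\mathcal{T}_{1,\ell})\cap D(\mathcal{T}_{p,\ell})$ and again gives only $X^{2p}$ control.
\item[(iv)] The fact that $u(\varepsilon/2)$ lies in these domains is part (a) of the proof of Theorem~\ref{lm2.2}(ii). That proof relies on Lemma~\ref{lm4.8} and Corollary~\ref{lm4.9}, so appealing to this regularity here is circular.
\end{itemize}

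\textbf{How to repair it within your framework.}
\begin{itemize}
\item[(a)] Restart \eqref{eq4.9} at $\varepsilon/4$. Note that $\mathcal{C}(u,u)$ is bounded in $X^p_\ell$ on $[\varepsilon/4,T]$ by Lemma~\ref{lm4.4} and \eqref{eq2.8b}.
\item[(b)] Use \eqref{eq2.7} from $X^p_\ell$ into $X^{4p}_{\ell,\rho}$. The singularity $(t-\tau)^{-\rho-\frac{3d}{8p\delta}}$ is integrable, because \eqref{eq2.8a} gives $\frac{3d}{8p\delta}<\frac34(1-\rho)$.
\item[(c)] This yields $u\in L^\infty([\varepsilon/2,T],X^{4p}_\ell\cap X^{4p}_{\ell,\rho})$, so $\mathcal{C}(u,u)$ is bounded in $X^{2p}_\ell$ there.
\item[(d)] Restart at $\varepsilon/2$. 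Use $\|(e^{h\mathcal{L}}-I)e^{s\mathcal{L}}\|_{X^{2p}_\ell\to X^{2p}_{\ell,\rho}}\le C\min(s^{-\rho},hs^{-1-\rho})$, which needs $\ell>\ell_{2p}$, and split the $s$-integral at $s=h$. This gives an $h^{1-\rho}$ bound with no H\"older continuity of the source needed.
\end{itemize}

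\textbf{How the paper avoids the bootstrap.} It writes a Volterra equation for the difference quotient $\Delta_h^\mu v$ itself, using $f(\tau+h)-f(\tau)=\mathcal{C}\bigl(v(\tau+h)-v(\tau),v(\tau+h)+v(\tau)\bigr)$. In the resulting inequality \eqref{eq4.13}, $\mu$ enters only through the forcing term $t^{-\rho-\mu}$. The kernel exponent ($\rho+\frac{d}{4p\delta}$, respectively $\rho+\frac{d}{2p\delta}$ and $\rho$ for the $X^p$ and $X^1$ parts) does not depend on $\mu$. Lemma~\ref{lm4.6} then gives every $\mu<1-\rho$ directly from the $X^{2p}$ bounds already contained in $Z^p_{\ell,\rho}(T)$.
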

\begin{proof}
(a) Let $u\in Z^p_{\ell,\rho}(T)$ be the mild solution constructed in Section~\ref{sec4.2}. 
For $0<\varepsilon<T$ and $0<h<T-\varepsilon$, we set
\begin{align*}
&v_0 := u(\varepsilon),\quad v(t) := u(t+\varepsilon),\quad 0\le t \le T-\varepsilon,\\
&\Delta_h^\mu [v](t) = \tfrac{v(t+h) - v(t)}{h^\mu},\quad 0< \mu < 1, \quad \varepsilon\le t \le T-\varepsilon-h
\end{align*}
and observe that on the account of \eqref{eq2.8b},
\begin{align*}
v\in Z^p_{\ell,\rho}(\varepsilon, T) & :=C\bigl([0,T-\varepsilon], Y^p_\ell \cap Y^p_{\ell,\rho}
\cap X^{2p}_\ell\cap X^{2p}_{\ell,\rho}\bigr).
\end{align*}
Using \eqref{eq4.9}, after some rearrangements we infer
\begin{align*}
\Delta_h^\mu[v](t) 
&= h^{-\mu}\int_0^h \mathcal{L}_{p,\ell} e^{(t+\tau)\mathcal{L}_{p,\ell}}d\tau v_0 \\
&+h^{-\mu}\int_0^h e^{(t+h-\tau)\mathcal{L}_{p,\ell}} \mathcal{C}\bigl(v(\tau),v(\tau)\bigr) d\tau\\
&+\int_0^t e^{(t-\tau)\mathcal{L}_{p,\ell}} \mathcal{C}\bigl(\Delta_h^\mu[v](\tau), 
v(\tau+h) + v(\tau)\bigr)d\tau =: I_0(t)+I_1(t) +I_2(t).
\end{align*}
Inclusion \eqref{eq4.11} is a byproduct of this identity and the the elementary bound
\begin{equation}\label{eq4.12}
\sup_{h,t\in\mathbb{R}_+} 
t^\nu \bigl(\tfrac{t}{h}\bigr)^{\mu}\int_0^h \tfrac{d\tau}{(t+\tau)^{1+\nu}} \le c_{\mu,\nu}<\infty,
\; 0< \mu \le 1,\; 0\le \nu.
\end{equation}

(b) We start with $X^{2p}_{\ell,j\rho}$-estimates.
Thanks to the assumption $\ell>\bar{\ell}_0\vee\ell_{2p}$, the semigroup 
$\bigl\{e^{t\mathcal{L}_{2p,\ell}}\bigr\}_{t\ge 0}$ is analytic in $X^{2p}_\ell$. 
Therefore, using \eqref{eq4.12}, \eqref{eq2.7} and the compatibility Lemma~\ref{lm4.2}, we infer
\begin{align*}
&\bigl\|I_0(t)\bigr\|_{X^{2p}_{\ell,j\rho}} \le c_1' h^{-\mu }\int_0^h(t+\tau)^{-1-j\rho} d\tau
\|v_0\|_{X^{2p}_{\ell, j\rho}}
\le c_1 t^{-j\rho - \mu }\|v_0\|_{X^{2p}_{\ell, j\rho}},\\ 
&0< \mu\le 1,\quad j=0,1,
\end{align*}
where $c_1$ depends on the semigroup $\bigl\{e^{t\mathcal{L}_{p,\ell}}\bigr\}_{t\ge 0}$,  
parameters $\mu$, $\rho$ and by the terminal time $T>0$ only.
Bound \eqref{eq2.7}, together with Lemma~\ref{lm4.4} and the inclusion 
$v\in Z^p_{\ell,\rho}(\varepsilon, T)$, gives
\begin{align*}
\bigl\|I_1(t)\bigr\|_{X^{2p}_{\ell,j\rho}} 
&\le c_2' h^{-\mu}\int_0^h (t+\tau)^{-\rho j - \frac{d}{4p\delta}}d\tau
\|v\|^2_{Z^{p}_{\ell,\rho}(\varepsilon,T)}\\
&\le c_2 \|v\|^2_{Z^{p}_{\ell,\rho}(\varepsilon,T)} t^{-j\rho-\mu}, 
\quad 0< \mu \le 1, \quad j=0,1, 
\end{align*}
where $c_2\ge 1$ is controlled by the same quantities as $c_1$.
Finally, using \eqref{eq2.7}, Lemma~\ref{lm4.4} and the inclusion $v\in Z^p_{\ell,\rho}(\varepsilon, T)$,
we have also
\begin{align*}
&\bigl\|I_2(t)\bigr\|_{X^{2p}_{\ell,j\rho}} 
\le c_3\|v\|_{Z^p_{\ell,\rho}(\varepsilon, T)}  
\int_0^t (t-\tau)^{-j\rho-\frac{d}{4p\delta}} 
\bigl\|\Delta^\mu_h[v](\tau)\bigr\|_{X^{2p}_{\ell,\rho}} d\tau,\\
&0<\mu\le 1, \quad j=0,1.
\end{align*}

We assemble all our estimates together to obtain
\begin{align}\label{eq4.13}
\bigl\|\Delta^\mu_h[v](t)\bigr\|_{X^{2p}_{\ell,\rho}} \le C_1 t^{-\rho -  \mu}
+ C_2 \int_0^t (t-\tau)^{-\rho-\frac{d}{4p\delta}} 
\bigl\|\Delta^\mu_h[v](\tau)\bigr\|_{X^{2p}_{\ell,\rho}}d\tau,
\end{align}
where $0<t\le T-\varepsilon - h$ and the constants $C_1, C_2\ge 1$, depend 
on the parameters of model \eqref{eq1.1}, terminal time $T$ and the norm 
$\|v\|_{Z^p_{\ell,\rho}(\varepsilon,T)}$. 
Now direct application of Lemma~\ref{lm4.6} gives the bound
\begin{subequations}\label{eq4.14}
\begin{equation}\label{eq4.14a}
\bigl\|\Delta^\mu_h[v](t)\bigr\|_{X^{2p}_{\ell,\rho}} \le C t^{-\rho - \mu},\quad 0<\mu<1-\rho,\quad 
\quad 0<t<T-\varepsilon - h.
\end{equation}
In view of \eqref{eq2.8a}, direct substitution of \eqref{eq4.14a} into our bound for 
$\bigl\|I_2(t)\bigr\|_{X^{2p}_{\ell}}$ yields also
\begin{equation}\label{eq4.14b}
\bigl\|\Delta^\mu_h[v](t)\bigr\|_{X^{2p}_{\ell}} \le C t^{ - \mu},\quad 0<\mu<1-\rho, 
\quad 0<t<T-\varepsilon - h.
\end{equation}

(c) Calculations, similar to those of part (b) above, give also
\begin{align*}
&\bigl\|I_0(t)\bigr\|_{Y^{p}_{\ell,j\rho}}  \le c_4 \|v_0\|_{Y^{p}_{\ell, j\rho}}t^{-j\rho - \mu },\\
&\bigl\|I_1(t)\bigr\|_{Y^{p}_{\ell,j\rho}} \le c_5  \|v\|^2_{Z^{p}_{\ell,\rho}(\varepsilon,T)} t^{-j\rho-\mu},\\
&\bigl\|I_2(t)\bigr\|_{X^{p}_{\ell,j\rho}} \le c_6 \|v\|_{Z^p_{\ell,\rho}(\varepsilon, T)}
\int_0^t (t-\tau)^{-j\rho-\frac{d}{2p\delta}} \bigl\|\Delta^\mu_h[v](\tau)\bigr\|_{X^p_{\ell,\rho}} d\tau,\\
&\bigl\|I_2(t)\bigr\|_{X^{1}_{\ell,j\rho}} \le c_6 \|v\|_{Z^p_{\ell,\rho}(\varepsilon, T)}
\int_0^t (t-\tau)^{-j\rho} \bigl\|\Delta^\mu_h[v](\tau)\bigr\|_{X^1_{\ell,\rho}}^{1-\frac{p'}{2}} 
\bigl\|\Delta^\mu_h[v](\tau)\bigr\|_{X^p_{\ell,\rho}}^{\frac{p'}{2}} d\tau,
\end{align*}
for $0< \mu \le 1$, $j=0,1$ with $c_4,c_5,c_6\ge 1$ controlled 
by the same quantities as $c_1$, $c_2$ and $c_3$. Using these bounds, 
as in part (b) of the proof we infer
\begin{align}
\label{eq4.14c}
&\bigl\|\Delta^\mu_h[v](t)\bigr\|_{Y^{p}_{\ell,\rho}} \le C t^{-\rho - \mu}, 
\quad 0<t<T-\varepsilon - h,\\
\label{eq4.14d}
&\bigl\|\Delta^\mu_h[v](t)\bigr\|_{Y^{p}_{\ell}} \le C t^{-\mu}, 
\quad 0<t<T-\varepsilon - h,
\end{align}
\end{subequations}
with $0<\mu<1-\rho$.
Bounds \eqref{eq4.14} and our definition of $v(t)$ indicate, that 
\[
u \in C^\mu\bigl([2\varepsilon, T],  Y^p_{\ell,\rho}\cap X^{2p}_{\ell,\rho}\bigr),
\]
for any $0<2\varepsilon<T$ and any exponent $0<\mu<1-\rho$.
The proof is complete.
\end{proof}

As an immediate consequence of Lemma~\ref{lm4.8}, we have
\begin{corollary}\label{lm4.9}
Under assumptions of Theorem~\ref{lm2.2}, for every $0<\varepsilon < T$, 
the map $f: t \mapsto \mathcal{C}\bigl(u(t),u(t)\bigr)$ satisfies
\begin{equation}\label{eq4.15}
f\in C^\mu\bigl([\varepsilon, T], Y^p_{\ell}\bigr),\quad 0\le \mu < 1 - \rho.
\end{equation}
\end{corollary}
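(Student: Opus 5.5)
The plan is to write $f(t)-f(s)$ through bilinearity and symmetry of $\mathcal{C}$ (Lemma~\ref{lm4.4}):
\[
\mathcal{C}\bigl(u(t),u(t)\bigr)-\mathcal{C}\bigl(u(s),u(s)\bigr)=\mathcal{C}\bigl(u(t)-u(s),\,u(t)+u(s)\bigr),\quad \varepsilon\le s<t\le T.
\]
Then we bound the right-hand side in $X^1_\ell$ and in $X^p_\ell$ separately, using \eqref{eq4.4} with Hölder exponents adapted to the spaces in which Lemma~\ref{lm4.8} gives Hölder regularity, namely $Y^p_{\ell,\rho}\cap X^{2p}_{\ell,\rho}$. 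Continuity on $[\varepsilon,T]$ and the case $\mu=0$ then follow from the same estimate.

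For the $X^p_\ell$-norm we apply \eqref{eq4.4} with $p$ and $q_i=q_i'=2$ for all $i$. Every factor then lies in $X^{2p}_{\ell,\rho}$, $X^{2p}_\ell$, $X^{2p}_{0,\rho}$ or $X^{2p}$. Since $w_0\le 2\le 2w_\ell$, these are all controlled by the $X^{2p}_{\ell,\rho}$- and $X^{2p}_\ell$-norms. Moreover $\underline{\beta}\ge\beta_0>0$ gives $X^{2p}_{\ell,\rho}\hookrightarrow X^{2p}_\ell$ up to the constant $\beta_0^{-\rho}$. Hence
\[
\|f(t)-f(s)\|_{X^p_\ell}\le C\,\|u(t)-u(s)\|_{X^{2p}_{\ell,\rho}}\sup_{[\varepsilon,T]}\|u\|_{X^{2p}_{\ell,\rho}}\le C|t-s|^\mu,
\]
by \eqref{eq4.11}. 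The supremum is finite because a Hölder continuous function on a compact interval is bounded.

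For the $X^1_\ell$-norm we apply \eqref{eq4.4} with exponent $1$ and $q_i=2$, which produces products of $X^2_{\ell,\rho}$-norms. By the interpolation inequality already used in Section~\ref{sec4.2},
\[
\|w\|_{X^2_{\ell,\rho}}\le\|w\|_{X^1_{\ell,\rho}}^{1-\frac{p'}{2}}\|w\|_{X^p_{\ell,\rho}}^{\frac{p'}{2}}\le\|w\|_{Y^p_{\ell,\rho}},
\]
which holds because $p\ge 2$ by \eqref{eq2.8a}. So the difference factor is again bounded by $\|u(t)-u(s)\|_{Y^p_{\ell,\rho}}\le C|t-s|^\mu$, and the other factor by $\sup_{[\varepsilon,T]}\|u\|_{Y^p_{\ell,\rho}}$. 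Combining the two estimates gives \eqref{eq4.15} for every $0<\mu<1-\rho$.

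The only point needing care is to check that each term on the right of \eqref{eq4.4} involves exactly the weighted spaces covered by \eqref{eq4.11}, including the $X^{\cdot}_{0,\rho}$ and unweighted pieces. Each of these reduces to the $\ell$-weighted, $\rho$-weighted norms via $w_0\le 2w_\ell$ and $\underline{\beta}\ge\beta_0$, so I expect no genuine obstacle. Finally, Lemma~\ref{lm4.8} is stated on $[\varepsilon,T]$ for arbitrary $\varepsilon$, so the claim holds for every $0<\varepsilon<T$.
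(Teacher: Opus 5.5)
Your proposal is correct and follows the paper's own argument: the paper writes $\Delta_h^\mu[f](t)=\mathcal{C}\bigl(\Delta_h^\mu[u](t),\,u(t+h)+u(t)\bigr)$ and concludes directly from Lemma~\ref{lm4.4} and \eqref{eq4.11}. Your version differs only in spelling out the steps the paper leaves implicit, namely the choice $q_i=2$ in \eqref{eq4.4}, the comparison of weights using $\underline{\beta}\ge\beta_0$, and the $X^1$--$X^p$ interpolation (valid since $p\ge 2$) for the $X^1_\ell$ part.
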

\begin{proof}
Since $\Delta_h^\mu [f](t) = \mathcal{C}\bigl(\Delta_h^\mu[u(t)], u(t+h)+u(t)\bigr)$, the result follows 
immediately from Lemma~\ref{lm4.4} and \eqref{eq4.11}.
\end{proof}

\begin{remark}\label{lm4.10}
As before, for smooth data $u_0\in D(\mathcal{T}_{1,\ell})\cap D(\mathcal{T}_{p,\ell})$, the conclusions 
of Lemma~\ref{lm4.8} and Corollary~\ref{lm4.9} can be significantly strengthened. 
Indeed, in this case calculation, similar to those of Lemma~\ref{lm4.8}, give
\begin{align}\label{eq4.16}
&u\in C^\mu\bigl([0,T'], Y^{p}_{\ell,\rho} \cap X^{2p}_{\ell,\rho}\bigr),\quad
f\in C^\mu\bigl([0,T'], Y^p_\ell\bigr),\quad 0\le \mu<1-\rho,
\end{align}
with the same $0<T'\le T$ as in Remark~\ref{lm4.7}.
\end{remark}

Now, we are in the position to settle the second claim of Theorem~\ref{lm2.2}.
\begin{proof}[Proof of Theorem~\ref{lm2.2}(ii)]
(a) Let $0<\varepsilon<T$ be fixed, as in Lemma~\ref{lm4.8}, we denote 
$v_0 := u(\varepsilon)$, $v(t) = u(\varepsilon+t)$, $0\le t\le T-\varepsilon$, and write 
\begin{align*}
v(t) & = e^{t\mathcal{L}_{p,\ell}}v_0 + \int_0^t e^{\tau \mathcal{L}_{p,\ell}} f(t)d\tau
+ \int_0^t e^{(t-\tau) \mathcal{L}_{p,\ell}} \bigl[f(\tau) - f(t)\bigr]d\tau\\
&:= J_0(t) + J_1(t) + J_2(t),\quad 0\le t\le T-\varepsilon. 
\end{align*}
By virtue of Lemma~\ref{lm4.8}, Corollary~\ref{lm4.9} and the analyticity of 
$\bigl\{e^{t\mathcal{L}_{p,\ell}}\bigr\}_{t\ge0}$,
\begin{align*}
&\mathcal{L}_{p,\ell}  J_0(t) \in C\bigl((0, T-\varepsilon], Y^p_\ell\bigr),\\
&\mathcal{L}_{p,\ell} J_1(t)  = (e^{t\mathcal{L}_{p,\ell}} - I) f(t) \in C\bigl((0, T-\varepsilon], Y^p_\ell\bigr).
\end{align*}
Further, using Corollary~\ref{lm4.9} and the analyticity of $\bigl\{e^{t\mathcal{L}_{p,\ell}}\bigr\}_{t\ge0}$,
for a fixed $0<\mu<1-\rho$ we obtain
\[
\bigl\|\mathcal{L}_{p,\ell}  J_2(t)\bigr\|_{Y^p_\ell} \le c \int_0^t (t-\tau)^{\mu-1} d\tau < \infty.
\]
Since the $Y^p_\ell$-valued map $t\mapsto \mathcal{L}_{p,\ell}  J_2(t)$ is continuous,
it follows that 
\begin{equation}\label{eq4.17}
u\in C\bigl([2\varepsilon, T], D(\mathcal{T}_{1,\ell})\cap D(\mathcal{T}_{p,\ell})\bigr),
\end{equation}
for every $0<2\varepsilon <T$.

(b) We choose $0<3\varepsilon < T$, 
$0<|h|<\varepsilon$, $3\varepsilon\le t\le T-\varepsilon$, write 
\[
\tfrac{u(t+h)-u(t)}{h} = \tfrac{e^{|h|\mathcal{L}_{p,\ell}}-I}{h} u\bigl(t+(h\wedge 0)\bigr) 
+ \frac{1}{h}\int_0^h e^{(|h|-\tau)\mathcal{L}_{p,\ell}} f\bigl(t+(h\wedge 0)+\tau\bigr)d\tau,
\]
and send $h\to 0$. In view of \eqref{eq4.17}, the right-hands side of 
the last identity converges to $\mathcal{L}_{p,\ell} u + \mathcal{C}(u, u)
\in C\bigl([3\varepsilon, T-\varepsilon], Y^p_\ell\bigr)$.
Consequently, the limit of the left-hand side also exists, i.e. $u$ is differentiable and 
$u_t \in C\bigl([3\varepsilon, T-\varepsilon], Y^p_\ell\bigr)$. 
We conclude that the identity
\[
u_t = \mathcal{L}_{p,\ell} u + \mathcal{C}(u, u), \quad 3\varepsilon < t \le T-\varepsilon,
\]
holds in $Y^p_\ell$ and that the mild solutions of Theorem~\ref{lm2.2}(i) are indeed classical.

(c) It remains to verify the continuous dependence of solutions on input data. Let 
\[
u_1, u_2 \in C\bigl([0,T], Y^p_\ell\bigr) \cap C^1\bigl((0,T), Y^{p}_\ell\bigr) 
\cap C\bigl((0,T], D(\mathcal{T}_{1,\ell}) \cap D(\mathcal{T}_{p,\ell})\bigr),
\] 
be two classical solutions of \eqref{eq1.1} associated with data $u_{1,0}, u_{2,0}\in X^p_\ell$ and let 
$e_0 = u_{1,0}-u_{2,0}$, $e = u_1-u_2$. Since both solutions satisfy 
\eqref{eq4.9}, we have
\[
e(t) = e^{t\mathcal{L}_{p,\ell}} e_0 + 
\int_{0}^t e^{(t-\tau)\mathcal{L}_{p,\ell}} \mathcal{C}\bigl(e(\tau), u_1(\tau)+u_2(\tau)\bigr)d\tau
\] 
and then
\begin{align}
\label{eq4.18}
&v(t) \le c_1 \|e_0\|_{Y^p_{\ell}} t^{-\rho} + 
c_2 \int_0^t (t-\tau)^{-\rho-\frac{d}{2p\delta}} v(\tau) d\tau,\\
\nonumber
&v(t):= \bigl\|e(t)\bigr\|_{Y^p_{\ell}}t^{-\rho} + \bigl\|e(t)\bigr\|_{Y^p_{\ell,\rho}},
\end{align}
where $c_1,c_2\ge 1$ are controlled by $p$, $\ell$, $\rho$, $\kappa(\alpha)$, 
$\kappa_\delta(\alpha,\beta)$, $c_\varkappa$ and, in addition, $c_2\ge 1$ depends on 
$\|u_i\|_{C([0,T], Y^p_\ell)}$ and $\|u_i\|_{C_\rho ((0,T], Y^p_{\ell,\rho})}$.
Lemmas~\ref{lm4.3}, \ref{lm4.6} and \eqref{eq4.18} allow us to conclude that
\[
\|e\|_{C([0,T], Y^p_\ell)}+\|e\|_{C_\rho((0,T], Y^p_{\ell,\rho})} \le c\|e_0\|_{Y^p_\ell},
\]
for some $c\ge 1$. The last inequality settles Theorem~\ref{lm2.2}(ii).
 \end{proof}

\begin{remark}\label{lm4.11}
For smooth data $u_0\in D(\mathcal{T}_{1,\ell})\cap D(\mathcal{T}_{p,\ell})$, the mild solutions are strict.
Indeed, using Remarks~\ref{lm4.7}, \ref{lm4.10} and repeating the proof 
of Theorem~\ref{lm2.2}(ii) almost verbatim, we arrive at 
\begin{align}\label{eq4.19}
u\in C\bigl([0,T], Y^p_\ell\bigr) \cap C^1\bigl([0,T], Y^{p}_\ell\bigr) 
\cap C\bigl([0,T], D(\mathcal{T}_{1,\ell})\cap D(\mathcal{T}_{p,\ell})\bigr),
\end{align}
with the same $0<T'\le T$ as in Remarks~\ref{lm4.7} and \ref{lm4.10}.
\end{remark}

\subsection{Positivity and mass conservation}\label{sec4.4}

Finally, Theorem~\ref{lm2.2}(i)-(ii) and Remarks~\ref{lm4.7}, \ref{lm4.10} and \ref{lm4.11}
allow us to settle the last claim of Theorem~\ref{lm2.2}.

\begin{proof}[Proof of Theorem~\ref{lm2.2}(iii)]
(a) To begin, we assume that initial data is very regular, i.e. 
$u_0\in Y^p_{\ell,+}\cap D(\mathcal{T}_{1,\ell})\cap D(\mathcal{T}_{p,\ell})$, and consider the auxiliary problem
\begin{equation}\label{eq4.20}
\bar{u}_t = \mathcal{L}_{p,\ell} \bar{u} + \mathcal{C}\bigl(|\bar{u}|, |\bar{u}|\bigr),\quad t>0,\quad u(0) = u_0.
\end{equation}
Since $\mathcal{C}\bigl(|\cdot|,|\cdot|\bigr)$ satisfies conclusions of Lemma~\ref{lm4.4}, 
Remarks~\ref{lm4.7}, \ref{lm4.10} and \ref{lm4.11} apply and we conclude that the solution 
to \eqref{eq4.20} is strict, i.e. 
\begin{equation}\label{eq4.21}
\bar{u}\in C\bigl([0,T], Y^p_\ell\bigr) \cap C^1\bigl([0,T], Y^{p}_\ell\bigr) \cap 
C\bigl([0,T], D(\mathcal{T}_{1,\ell})\cap D(\mathcal{T}_{p,\ell})\bigr).
\end{equation}
We let 
\begin{align*}
&\bar{v} := c_\varkappa C_\beta \|\bar{u}\|_{L^1(\mathbb{R}_+, w_\ell \underline{\beta}^\rho d\xi)},\\
&\mathcal{K}(u) := - \mathcal{E}_{\bar{v},p,\ell}u + \mathcal{C}\bigl(|u|,|u|\bigr).
\end{align*}
By virtue of Remark~\ref{lm4.10},
\[
\bar{v}\in C^\mu \bigl([0,T], L^{2p}_+(\mathbb{R}^d)\bigr),\quad 0\le \mu<1-\rho
\]
and therefore, the family 
$\bigl\{\bigl(\mathcal{L}_{p,\ell}+\mathcal{E}_{\bar{v},p,\ell}(t), D(\mathcal{T}_{p,\ell})\bigr)\bigr\}_{t\in[0,T]}$ 
fells in the scope of Lemma~\ref{lm4.5}. 

We employ the last fact, bound \eqref{eq4.6} and repeat calculations of Remarks~\ref{lm4.7}, \ref{lm4.10} 
and \ref{lm4.11} one more time, to conclude that the second auxiliary problem
\begin{subequations}\label{eq4.22}
\begin{equation}\label{eq4.22a}
\hat{u}_t = \bigl[\mathcal{L}_{p,\ell} + \mathcal{E}_{\bar{v},p}\bigr]\hat{u} 
+ \mathcal{K}(\hat{u}),\quad t>0,\quad u(0) = u_0.
\end{equation}
is also strictly solvable in $X^p_{\ell}$ and that its strict solution 
\begin{equation}\label{eq4.22b}
\hat{u}\in C\bigl([0,T], Y^p_\ell\bigr) \cap C^1\bigl([0,T], Y^{p}_\ell\bigr) \cap 
C\bigl([0,T], D(\mathcal{T}_{1,\ell})\cap D(\mathcal{T}_{p,\ell})\bigr),
\end{equation}
is given by the variation of constant formula
\begin{equation}\label{eq4.22c}
\hat{u}(t) = \mathcal{U}_{\bar{v}, p,\ell}(t,0) u_0 
+ \int_{0}^t \mathcal{U}_{\bar{v}, p,\ell}(t,\tau) \mathcal{K}\bigl(\hat{u}(\tau)\bigr)d\tau,\quad 0\le t\le T.
\end{equation}
\end{subequations}

In view of \eqref{eq4.22b}, formula \eqref{eq4.22a} implies that $\hat{u}$ solves \eqref{eq4.20} 
and (by the uniqueness of strict solutions) that $\hat{u} = \bar{u}$. This fact, together with our 
definition of $\bar{v}$ implies that $\bigr\{\mathcal{K}\bigl(\hat{u}(t)\bigr)\bigr\}_{0\le t\le T} 
\subset Y^p_{\ell,+}$. 
Since the evolution operator $\mathcal{U}_{\bar{v},p,\ell}$ is also positive, it follows from 
\eqref{eq4.22c} that $\bar{u} = \hat{u} \in C\bigl([0,T], Y^p_{\ell,+}\bigr)$. Finally, positivity of $\hat{u}$ implies
$\mathcal{C}\bigl(|\hat{u}|, |\hat{u}|\bigr) = \mathcal{C}(\hat{u}, \hat{u})$ and therefore 
$\bar{u} = \hat{u}$ solves \eqref{eq1.1}. We conclude that for regular positive data 
strict solutions of \eqref{eq1.1} are indeed positive.

(b) For general data $u_0\in Y^p_{\ell,+}$ and $h>0$, we have  
\[
u_{0,h} = \tfrac{1}{h}\int_0^h e^{\tau \mathcal{L}_{p,\ell}} u_0 d\tau \in Y^p_{\ell,+}\cap 
D(\mathcal{T}_{1,\ell})\cap D(\mathcal{T}_{p,\ell}).
\]
Since $u_{0,h}\to u_0$ as $h\to0^+$ in $Y^{p}_{\ell,+}$ and since 
$Y^p_{\ell,+}$ and $C\bigl([0,T], Y^p_{\ell,+}\bigr)$ are closed, the first part of Theorem~\ref{lm2.2}(iii) 
follows from the continuous dependence of classical solutions on the input data.

(c) By our assumptions, $\ell> 1$ and $u_0\in Y^p_{\ell,+}$. Since in this case 
the classical solution $u$ is positive, we fix some $0<t\le T$, multiply \eqref{eq1.1} by $\xi$ and 
integrate both sides of the equation over $\mathbb{R}^d\times\mathbb{R}_+$. 
Thanks to the special structure of the diffusion, fragmentation and coagulation operators, 
direct application of the divergence and the Fubini theorems gives
\[
\tfrac{d}{dt}\|u\|_{X^1_\xi} = 0, \quad 0<t\le T.
\]
Upon integration, $\bigl\|u(\tau)\bigr\|_{X^1_\xi} = \bigl\|u(t)\bigr\|_{X^1_\xi}$, 
$0<\tau\le t\le T$. Since, $u\in C\bigl([0,T], Y^p_\ell\bigr)$, it follows that 
$\bigl\|u(t)\bigr\|_{X^1_\xi} = \lim_{\tau\to0^+} \bigl\|u(\tau)\bigr\|_{X^1_\xi} = \|u_0\|_{X^1_\xi}$ 
and the second claim of Theorem~\ref{lm2.2}(iii) is also settled.
\end{proof}

To conclude this section, we remark that the artificial restriction $T\le 1$, 
that appears in the proof of Theorem~\ref{lm2.2}(i), is imposed for the sake 
of convenience only.  The mild (respectively classical) solution emanating from $u_0\in Y^{p}_{\ell}$,
extends to its maximal interval of existence $\bigl[0, T(u_0)\bigr)$ and 
the standard continuation alternative holds:
\begin{itemize}
\item[(i)]
either $T(u_0)=\infty$ and the solution is global; 
\item[(ii)] or $T(u_0)<\infty$ and $\bigl\|u(t)\bigr\|_{Y^p_{\ell}} \to\infty$ as $t\to T(u_0)^-$. 
\end{itemize}
\section{Global solutions}\label{sec5}

In this Section, we restrict our attention to \eqref{eq1.1}, equipped with power rates that satisfy \eqref{eq2.9}. 
In this special case, Theorem~\ref{lm2.2} definitely applies and positive classical solutions $u(t)$ associated to 
data $u_0\in Y^p_{\ell,+}$, $\ell>\bar{\ell}_0\vee \ell_{2p}$, are, at least locally, well defined. In what follows, 
we show that under \eqref{eq2.4}, the $Y^p_{\ell}$-norms of these solutions remain bounded 
in bounded time intervals. 
\begin{lemma}\label{lm5.2}
Assume $0< \theta_\beta \le 1 - \bar{\ell}_0$. Then, in the settings of Theorem~\ref{lm2.3}, 
the classical positive solutions to \eqref{eq1.1} satisfy
\begin{equation}\label{eq5.3}
\bigl\|u(t)\bigr\|_{X^1_1} \le e^{\omega t} \|u_0\|_{X^1_1},\quad 0\le t<T,
\end{equation}
with some $\omega\ge 1$.
\end{lemma}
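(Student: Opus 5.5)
Since $u\ge 0$, we have $\|u(t)\|_{X^1_1} = \int\!\!\int u\,(1+\xi)\,d\xi\,dx = \|u(t)\|_{X^1} + \|u(t)\|_{X^1_\xi}$. By Theorem~\ref{lm2.2}(iii) the mass part $\|u(t)\|_{X^1_\xi}$ is conserved, so it suffices to control the total particle number $N(t):=\|u(t)\|_{X^1}$. The plan is to derive a differential inequality $N'(t)\le \omega N(t) + C\|u_0\|_{X^1_\xi}$ and then apply Gronwall; this gives \eqref{eq5.3} with some $\omega\ge 1$.

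\emph{Step 1: the equation for $N$.} Using the regularity \eqref{eq2.8c}, $u\in C^1((0,T),Y^p_\ell)$ with $u(t)\in D(\mathcal{T}_{1,\ell})$, we integrate \eqref{eq1.1} over $\mathbb{R}^d\times\mathbb{R}_+$ with weight $1$, exactly as in the mass-conservation argument of Theorem~\ref{lm2.2}(iii)(c). The diffusion term integrates to zero by the divergence theorem. For the coagulation term, Fubini gives $\int\mathcal{C}(u,u)\,d\xi = -\tfrac12\int\!\!\int\varkappa\,u(\xi)u(\eta)\,d\xi\,d\eta\le 0$, since the gain part counts each pair once and the loss part twice. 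This leaves
\[
N'(t)\le \int_{\mathbb{R}^d}\int_{\mathbb{R}_+}\Bigl(\int_0^\eta\gamma(\xi,\eta)\,d\xi-1\Bigr)\beta(x,\eta)\,u(x,\eta)\,d\eta\,dx .
\]
All integrals converge because $\beta u\in X^1_\ell$ and $n(\eta):=\int_0^\eta\gamma(\xi,\eta)\,d\xi\le C(1+\eta^{\ell'})$ for $\ell'>\bar\ell_0$.

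\emph{Step 2: bounding the fragmentation gain.} By \eqref{eq2.2b} and \eqref{eq2.9b}, $\beta(x,\eta)\le C_\beta\bar c_\beta(1+\eta)^{\theta_\beta}$. Choose $\bar\ell_0<\ell'$ with $\ell'+\theta_\beta\le 1$; this is possible when $\theta_\beta<1-\bar\ell_0$. In the endpoint case $\theta_\beta=1-\bar\ell_0$ one has to use the $\limsup$ defining $\sigma_{0,\ell}$ more carefully, or accept a loss of an $\epsilon$ in the exponent. With this choice,
\[
n(\eta)\,\beta(\eta)\le C(1+\eta^{\ell'})(1+\eta)^{\theta_\beta}\le C'(1+\eta).
\]
Hence the right-hand side of Step 1 is at most $C'\int\!\!\int(1+\eta)\,u\,d\eta\,dx = C'\|u(t)\|_{X^1_1}$. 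For $\eta$ near $0$ we only need $\beta$ locally bounded (from \eqref{eq2.2c}) and $n(\eta)$ locally bounded, which follows from \eqref{eq2.3c}; if necessary, this is handled by splitting the $\eta$-integral at some $\bar\eta$ as in the proof of Theorem~\ref{lm2.1}.

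\emph{Step 3: conclusion.} Combining Steps 1 and 2 with mass conservation gives
\[
\tfrac{d}{dt}\|u\|_{X^1_1} = N'(t)\le C'\|u\|_{X^1_1}.
\]
Gronwall's inequality on $(\varepsilon,t)$, followed by $\varepsilon\to0$ using continuity in $Y^p_\ell\hookrightarrow X^1_1$, yields \eqref{eq5.3} with $\omega:=C'\vee1$.

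The main obstacle is Step 2: the unbounded fragmentation rate must be absorbed by the first moment alone, and the hypothesis $\theta_\beta\le 1-\bar\ell_0$ is exactly what makes $n(\eta)\beta(\eta)=O(\eta)$ at infinity. Justifying the differentiation under the integral sign is routine by comparison with the mass-conservation computation in Theorem~\ref{lm2.2}(iii).
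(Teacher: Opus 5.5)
Your proposal is correct and follows the paper's proof essentially verbatim. Both integrate \eqref{eq1.1} against the weight $1$, drop the diffusion term and the non-positive coagulation contribution, bound the fragmentation gain by $c'\|u(t)\|_{X^1_{\theta_\beta+\bar{\ell}_0}}\le c\|u(t)\|_{X^1_1}$, and close with the mass conservation of Theorem~\ref{lm2.2}(iii) and Gronwall's inequality.

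The endpoint $\theta_\beta=1-\bar{\ell}_0$ that you flag is handled in the paper only by tacitly using $\int_0^\eta\gamma(\xi,\eta)\,d\xi\le C(1+\eta^{\bar{\ell}_0})$ for all $\eta>0$, as in the proof of Theorem~\ref{lm2.1}. The same tacit bound also covers boundedness of that integral near $\eta=0$, which, contrary to your remark, does not follow from \eqref{eq2.3c} alone. Under \eqref{eq2.10} the endpoint does not occur anyway.
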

\begin{proof}
Thanks to the positivity of $u$ and special structure of the diffusion, the fragmentation 
and the coagulation operators, direct application of the divergence and the Fubini theorem gives
\[
\tfrac{d}{dt} \bigl\|u(t)\bigr\|_{X^1} \le c' \bigl\|u(t)\bigr\|_{X^1_{\theta_\beta+\ell_0}} 
\le c \bigl\|u(t)\bigr\|_{X^1_{1}},
\]
with some $c',c\ge 1$. Combining this bound with Theorem~\ref{lm2.2}(iii) and then using Gronwall's 
inequality, we arrive at \eqref{eq5.3}.
\end{proof}

To proceed further, we use $\bigl(\underline{\mathcal{T}}_{p,\ell}, D(\underline{\mathcal{T}}_{p,\ell})\bigr)$ 
to denote the $X^p_\ell$, $1\le p<\infty$, realization of  
\[
\underline{\mathcal{T}}_0 [u](x,\xi) := \nabla^T \alpha(\xi,x)\nabla u(x,\xi) - \underline{\beta}(\xi) u(x,\xi),
\]
from Theorem~\ref{lm3.3} and let $\bigl(\Delta_p, D(\Delta_p)\bigr)$ be the 
$L^p(\mathbb{R}^d)$ realization of the standard $d$-dimensional Laplacian  
as described in Section~\ref{sec3.1}. On the account of \eqref{eq2.9a}, for a.e. $\xi\in\mathbb{R}_+$,
actions of $\{e^{t\underline{\mathcal{T}}_{p,\ell}}\}_{t\ge 0}$, $1\le p<\infty$, are given explicitly by 
\[
\bigl[e^{t\underline{\mathcal{T}}_{p,\ell}} u\bigr](x,\xi) 
= \bigl[e^{-t\underline{\beta}(\xi)} e^{t\alpha(\xi)\Delta_p} u\bigr](x, \xi),
\quad t>0.
\] 
Since, 
 \[
t_0^{\frac{d}{2}} e^{t_0 \Delta_p} u \le t_1^{\frac{d}{2}} e^{t_1 \Delta_p} u, \quad 
0<t_0\le t_1,\quad u\in L^p_+(\mathbb{R}^d),\quad 1\le p\le\infty,
\]
it follows that
\begin{equation}\label{eq5.4}
\alpha(\xi_1)^{\frac{d}{2}}\bigl[e^{t\underline{\mathcal{T}}_{p,\ell}(\xi_1)} u\bigr](x,\xi_1) \le  
\alpha(\xi_0)^{\frac{d}{2}}\bigl[e^{t\underline{\mathcal{T}}_{p,\ell}(\xi_0)} u\bigr](x,\xi_0),\; 
\text{for a.e. $0<\xi_0\le \xi_1$},
\end{equation}
for $u\in X^p_{\ell,+}$, $1\le p<\infty$, and $t>0$.

The \emph{size-monotonicity property} \eqref{eq5.4} yields
\begin{lemma}\label{lm5.3}
In the settings of Lemma~\ref{lm5.2}, the positive classical solutions satisfy
\begin{subequations}\label{eq5.5}
\begin{equation}\label{eq5.5a}
\bigl\|u(t)\bigr\|_{X^{p_1}_r} \le c_r\|e^{t\underline{\mathcal{T}}_{p_1,\ell}}u_0\|_{X^{p_1}_r}
+ c_r\int_0^t (t-\tau)^{\theta - \frac{d}{2\delta}(\frac{1}{p_0}-\frac{1}{p_1})-1} 
\bigl\|u(\tau)\bigr\|_{X^{p_0}_{r+\theta_\beta\theta}} d\tau,
\end{equation}
$0\le t<T$, with some $c_r\ge 1$, provided
\begin{align}
\label{eq5.5b}
&0\le \tfrac{d}{2\delta}\bigl(\tfrac{1}{p_0} - \tfrac{1}{p_1}\bigr) < \theta <1, \quad 1\le p_0\le p_1 \le p<\infty,\\
\label{eq5.5c}
&\bar{\ell}_1 + (1-\theta)\theta_\beta < r \le (1- \theta_\alpha)\wedge (\ell - \theta_\beta\theta).
\end{align}
\end{subequations}
\end{lemma}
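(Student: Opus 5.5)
The idea is to write the positive classical solution through the variation-of-constants formula built on the simple loss--diffusion semigroup $\{e^{t\underline{\mathcal{T}}_{p,\ell}}\}_{t\ge 0}$. Everything else in \eqref{eq1.1} is then treated as a forcing term.

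Since $\beta\ge\underline{\beta}$ and $u\ge 0$, we may rewrite \eqref{eq1.1} as
\[
u_t=\underline{\mathcal{T}}_0u-(\beta-\underline{\beta})u+\mathcal{B}^+u+\mathcal{C}^+(u,u)-\mathcal{C}^-(u,u).
\]
Dropping the nonpositive terms $-(\beta-\underline{\beta})u$ and $-\mathcal{C}^-(u,u)$ and using positivity of the semigroup, we expect the pointwise bound
\[
0\le u(t)\le e^{t\underline{\mathcal{T}}}u_0+\int_0^t e^{(t-\tau)\underline{\mathcal{T}}}\bigl[\mathcal{B}^+u(\tau)+\mathcal{C}^+(u(\tau),u(\tau))\bigr]d\tau .
\]
This should be justified by a Duhamel argument for the classical solution, which is admissible by Theorem~\ref{lm2.2}(ii),(iii).

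Next we take the $X^{p_1}_r$-norm with the low moment weight $w_r=1+\xi^r$, $r<1$. The key device is the size-monotonicity property \eqref{eq5.4}. In $\mathcal{B}^+u$, mass at size $\eta$ is transported to smaller sizes $\xi\le\eta$. In $\mathcal{C}^+$, mass is transported to larger sizes $\xi$, but the coagulation term is controlled by the fragmentation through \eqref{eq2.6c} and, after Fubini, by the loss $\mathcal{C}^-$ that we discarded. For the fragmentation gain, we apply the semigroup at the small size $\xi$ to data living at $\eta\ge\xi$. Using \eqref{eq5.4} together with $\alpha$ non-increasing and $\underline\beta$ non-decreasing (so $e^{-t\underline\beta(\xi)}$ only helps), the kernel at size $\xi$ should be dominated by
\[
\bigl(\alpha(\eta)/\alpha(\xi)\bigr)^{d/2}\times(\text{the kernel at size }\eta)\lesssim \bigl((1+\xi)/(1+\eta)\bigr)^{\theta_\alpha}\times(\text{the kernel at size }\eta),
\]
using \eqref{eq2.9a}.

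Integrating in $\xi$ against $w_r(\xi)\gamma(\xi,\eta)$ then produces the factor $\int_0^\eta \gamma(\xi,\eta)(1+\xi)^{r+\theta_\alpha}d\xi\,(1+\eta)^{-\theta_\alpha}$. Since $r+\theta_\alpha\le 1$ and $r>\bar\ell_1$ (and $\bar{\ell}_0<1$), this is bounded by $c(1+\eta)^{r}$, by the definitions of $\sigma_\ell$ and $\sigma_{0,\ell}$ and assumption \eqref{eq2.4}. This is exactly where the constraint $r\le 1-\theta_\alpha$ and the lower bound on $r$ enter.

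The result is a term of the form $\underline{\beta}(\eta)e^{(t-\tau)\underline{\mathcal{T}}(\eta)}$ acting on $u(\eta)$ in the weight $w_r$. To handle it, we split $\underline\beta=\underline\beta^{\theta}\underline\beta^{1-\theta}$. The hypercontractive estimate \eqref{eq3.7c} (Lemma~\ref{lm3.4} with $s=1-\theta$, passing from $X^{p_0}$ to $X^{p_1}$) absorbs $\underline\beta^{1-\theta}$ at the cost of $(t-\tau)^{-(1-\theta)-\frac d{2\delta}(\frac1{p_0}-\frac1{p_1})}$. The remaining $\underline\beta^\theta\sim(1+\eta)^{\theta_\beta\theta}$ raises the moment to $r+\theta_\beta\theta$. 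This gives exactly the integrand in \eqref{eq5.5a}, and integrability at $\tau=t$ requires \eqref{eq5.5b}. The lower bound $r>\bar\ell_1+(1-\theta)\theta_\beta$ reflects that the moment bookkeeping must be done with the reduced exponent $r-(1-\theta)\theta_\beta$ before multiplying by $\underline\beta^{1-\theta}$. The upper bound $r\le\ell-\theta_\beta\theta$ only ensures that the right-hand side is finite for $u\in Y^p_\ell$.

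The main obstacle is the coagulation gain. After Fubini, $\mathcal{C}^+$ moves mass from $(\eta,\xi-\eta)$ to the larger size $\xi$, where \eqref{eq5.4} points the wrong way. The plan is to avoid estimating $\mathcal{C}^+$ alone and instead combine it with $\mathcal{C}^-$ in weighted form. Because $w_r$ is subadditive for $r\le 1$, we have $w_r(\xi+\eta)\le w_r(\xi)+w_r(\eta)$, so $\int(\mathcal{C}^+-\mathcal{C}^-)w_r\,d\xi\le 0$ pointwise in $x$. We would try to keep this cancellation inside the Duhamel formula by replacing $e^{t\underline{\mathcal{T}}}$ with an $\alpha$-comparison: since the larger aggregate diffuses more slowly, $\alpha(\xi+\eta)\le\alpha(\xi)$, and \eqref{eq5.4} can be applied in reverse to the loss terms. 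If this fails, the fallback is to bound $\mathcal{C}^+$ crudely via \eqref{eq2.6c} and Lemma~\ref{lm5.2}, and to absorb it into the constant $c_r$ on bounded time intervals.
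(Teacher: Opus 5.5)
Your proposal has a genuine gap at the step you flag as the main obstacle, and a second error in the fragmentation step. By discarding $-\mathcal{C}^-(u,u)$ at the outset you lose the only mechanism that controls the coagulation gain, and your fallback cannot repair this. \eqref{eq5.5a} is linear in $u$ with $c_r$ independent of the solution. Any direct bound on $\mathcal{C}^+(u,u)$ in $X^{p_1}_r$ via \eqref{eq2.6c}, by contrast, is quadratic and needs $L^{2p_1}$-type control in $x$, which the $X^1_1$ bound of Lemma~\ref{lm5.2} does not provide.

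The missing idea has three parts. Drop only $-(\beta-\underline{\beta})u$. Keep the whole term $J_1(t)=\int_0^t e^{(t-\tau)\underline{\mathcal{T}}_{p_1,\ell}}\mathcal{C}\bigl(u(\tau),u(\tau)\bigr)d\tau$. Then integrate in $\xi$ against the modified weight $w(\xi):=w_{r+\theta_\alpha}(\xi)\alpha(\xi)^{d/2}$, which is equivalent to $w_r$ by \eqref{eq2.9a}. After symmetrization, the integrand of $\int_{\mathbb{R}_+}J_1(t)w\,d\xi$ is
\[
\bigl(w(\xi+\eta)e^{(t-\tau)\underline{\mathcal{T}}(\xi+\eta)}-w(\xi)e^{(t-\tau)\underline{\mathcal{T}}(\xi)}-w(\eta)e^{(t-\tau)\underline{\mathcal{T}}(\eta)}\bigr)\bigl[\varkappa(\cdot,\xi,\eta)u(\cdot,\xi)u(\cdot,\eta)\bigr].
\]
This is $\le 0$ for two reasons. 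First, $w_{r+\theta_\alpha}$ is subadditive, and this is exactly where $r\le 1-\theta_\alpha$ is used. Second, \eqref{eq5.4} applies in its stated direction: $\alpha^{d/2}$ times the semigroup at the larger size $\xi+\eta$ is dominated by $\alpha^{d/2}$ times the semigroup at $\xi$ or at $\eta$. So \eqref{eq5.4} does not point the wrong way for coagulation. It points the right way once $\alpha^{d/2}$ is built into the weight, and the extra moment $\theta_\alpha$ compensates for $\alpha^{d/2}\approx(1+\xi)^{-\theta_\alpha}$. The result is $\|u(t)\|_{L^1(\mathbb{R}_+,w_rd\xi)}\le c\|e^{t\underline{\mathcal{T}}_{p_1,\ell}}u_0\|_{L^1(\mathbb{R}_+,w_rd\xi)}+c\|J_0(t)\|_{L^1(\mathbb{R}_+,w_rd\xi)}$ a.e. in $x$, with no coagulation contribution at all.

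Second, your fragmentation step uses \eqref{eq5.4} backwards. For $\xi\le\eta$ and $f\ge 0$, \eqref{eq5.4} gives $e^{t\underline{\mathcal{T}}(\eta)}f\le(\alpha(\xi)/\alpha(\eta))^{d/2}e^{t\underline{\mathcal{T}}(\xi)}f$: the semigroup at the larger size is dominated by the one at the smaller size. The reverse domination you need, of $e^{t\underline{\mathcal{T}}(\xi)}$ by $(\alpha(\eta)/\alpha(\xi))^{d/2}e^{t\underline{\mathcal{T}}(\eta)}$, is false whenever $\alpha(\xi)>\alpha(\eta)$. The wider Gaussian at size $\xi$ has heavier tails, and $e^{-t\underline{\beta}(\xi)}\ge e^{-t\underline{\beta}(\eta)}$ works against you, not for you.

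No transfer between sizes is needed. The paper bounds $J_0$ purely in norm. Set $r':=r-(1-\theta)\theta_\beta$. Apply \eqref{eq3.7c} with $s=1-\theta$ from $X^{p_0}_{r'}$ to $X^{p_1}_{r',1-\theta}\approx X^{p_1}_r$, using \eqref{eq2.9b}. Then use $\mathcal{B}^+\in\mathcal{L}\bigl(X^{p_0}_{r',1},X^{p_0}_{r'}\bigr)$ for $r'>\bar{\ell}_1$, together with $X^{p_0}_{r',1}\approx X^{p_0}_{r+\theta\theta_\beta}$. This is where the lower bound in \eqref{eq5.5c} actually enters. Your splitting $\underline{\beta}=\underline{\beta}^{\theta}\underline{\beta}^{1-\theta}$ is the right instinct, but it should act at the output size $\xi$ through \eqref{eq3.7c}, not after a size transfer.
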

\begin{proof}
(a) In the settings of Lemma~\ref{lm5.3}, we have 
$Y^p_\ell\hookrightarrow X^{p_1}_r\cap X^{p_0}_{r+\theta_\beta\theta}$, therefore
\begin{align*}
u(t) &= e^{t\underline{\mathcal{T}}_{p_1,\ell}} u_0 
 - \int_0^t e^{(t-\tau)\underline{\mathcal{T}}_{p_1,\ell}} [(\beta - \underline{\beta}) u(\tau)] d\tau\\
&\qquad\qquad\quad
+ \int_0^t e^{(t-\tau)\underline{\mathcal{T}}_{p_1,\ell}} \mathcal{B}_{p_0,\ell}^+[u(\tau)] d\tau
+ \int_0^t e^{(t-\tau)\underline{\mathcal{T}}_{p_1,\ell}} \mathcal{C}\bigl(u(\tau),u(\tau)\bigr) d\tau\\
&\le e^{t\underline{\mathcal{T}}_{p_1,\ell}} u_0 
+ \int_0^t e^{(t-\tau)\underline{\mathcal{T}}_{p_1,\ell}} \mathcal{B}_{p_0,\ell}^+[u(\tau)] d\tau
+ \int_0^t e^{(t-\tau)\underline{\mathcal{T}}_{p_1,\ell}} \mathcal{C}\bigl(u(\tau),u(\tau)\bigr) d\tau\\
& =: e^{t\underline{\mathcal{T}}_{p_1,\ell}} u_0 + J_0(t) + J_1(t),\quad 0\le t<T.
\end{align*}
We let $w(\xi) := w_{r+\theta_\alpha}(\xi)\alpha(\xi)^{\frac{d}{2}}$ and remark that 
$w(\xi)\approx w_r(\xi)$ in $\mathbb{R}_+$ and that 
\begin{equation}\label{eq5.6}
w_{r+\theta_\alpha}(\xi+\eta) \le w_{r+\theta_\alpha}(\xi)+w_{r+\theta_\alpha}(\eta),\quad 
\xi,\eta\in\mathbb{R}_+,
\end{equation}
thanks to \eqref{eq2.9a} and the second inequality of \eqref{eq5.5c}, respectively. 
As $J_1(t)\in X^{p_1}_r$, it follows that 
$J_1(t)(x)\in L^1(\mathbb{R}_+,wd\xi)$ for a.e. $x\in \mathbb{R}^d$. Hence, we may use the Fubini 
theorem, the size-monotonicity of the loss diffusion semigroup \eqref{eq5.4} and the elementary 
inequality \eqref{eq5.6}, to conclude that
\begin{align*}
\qquad
\int_{\mathbb{R}_+}  J_1(t) w(\xi)d\xi
&= \tfrac{1}{2}\int_{0}^t \int_{\mathbb{R}_+} \int_{\mathbb{R}_+}  
\Bigl(w(\xi+\eta) e^{(t-\tau)\underline{\mathcal{T}}_{p,\ell}(\xi+\eta)} \\
&\qquad\qquad\qquad\quad
- w(\xi) e^{(t-\tau)\underline{\mathcal{T}}_{p,\ell}(\xi)} 
- w(\eta) e^{(t-\tau)\underline{\mathcal{T}}_{p,\ell}(\eta)}\Bigr) \\ 
&\qquad\qquad\qquad\quad
[\varkappa(\cdot,\xi,\eta)u(\cdot,\xi)u(\cdot,\eta)]
d\xi d\eta d\tau \le 0,
\end{align*}
a.e. in $\mathbb{R}^d$. Since $u$ is positive, this gives
\begin{align*}
\bigl\|u(t)\bigr\|_{L^1(\mathbb{R}_+, w_rd\xi)} \le  
c\|e^{t\underline{\mathcal{T}}_{p_1,\ell}} u_0\|_{L^1(\mathbb{R}_+, w_rd\xi)}
+ c\bigl\|J_0(t)\bigr\|_{L^1(\mathbb{R}_+, w_rd\xi)},\quad 0\le t<T,
\end{align*}
a.e. in $\mathbb{R}^d$ and then
\[
\bigl\|u(t)\bigr\|_{X^{p_1}_r} \le c\|e^{t\underline{\mathcal{T}}_{p_1,\ell}}u_0\|_{X^{p_1}_r} 
+ c\bigl\|J_0(t)\bigr\|_{X^{p_1}_r},\quad 0\le t<T,
\]
where $c\ge 1$ depends on the equivalence constants from \eqref{eq2.9a}.

(b) To bound $J_0(t)$, we note that
\[
\mathcal{B}_{p_0,\ell}^+\in \mathcal{L}\bigl(X^{p}_{\ell,1}, X^{p}_{\ell}\bigr),\quad 1\le p<\infty,
\quad \bar{\ell}_1< \ell,
\]
while under assumption \eqref{eq2.9b}, we have
\[
\|\cdot\|_{X^p_{\ell,s}} \approx \|\cdot\|_{X^p_{\ell+s\theta_\beta}}, \quad 1\le p<\infty,\quad 
0\le \ell,\ell+s\theta_\beta.
\]
Hence, \eqref{eq3.7c}, \eqref{eq2.4} and the first inequality in \eqref{eq5.5c} guarantee that
\begin{align*}
\bigl\|J_0(t)\bigr\|_{X^{p_1}_r} 
&\le c\int_0^t \bigl\|e^{(t-\tau)\underline{\mathcal{T}}_{p_1,\ell}} 
\mathcal{B}_{p_0,\ell}^+[u(\tau)]\bigr\|_{X^{p_1}_r} d\tau\\
&\le c\int_0^t (t-\tau)^{\theta-1 - \frac{d}{2\delta}(\frac{1}{p_0}-\frac{1}{p_1})} 
\bigl\|\mathcal{B}_{p_0,\ell}^+[u(\tau)]\bigr\|_{X^{p_0}_{r-(1-\theta)\theta_\beta}}d\tau\\
&\le c\int_0^t (t-\tau)^{\theta-1 - \frac{d}{2\delta}(\frac{1}{p_0}-\frac{1}{p_1})} 
\bigl\|u(\tau)\bigr\|_{X^{p_0}_{r+\theta\theta_\beta}}d\tau,\quad 0\le t<T.
\end{align*}
The last bound completes the proof.
\end{proof}

As an immediate consequence of Lemmas~\ref{lm5.2} and \ref{lm5.3}, we have
\begin{corollary}\label{lm5.4}
Assume that conditions of Lemma~\ref{lm5.2} are satisfied and that 
\begin{subequations}\label{eq5.7}
\begin{equation}\label{eq5.7a}
0<\theta_\alpha<\theta_\beta < \tfrac{2\delta(1-\bar{\ell}_1)p'}{d+2\delta p'} \wedge (1-\bar{\ell}_0),
\end{equation}
then
\begin{equation}\label{eq5.7b}
\bigl\|u(t)\bigr\|_{X^p_{\bar{\ell}_1 + \theta_\beta - \theta_\alpha}} \le c e^{w t} \|u_0\|_{Y^p_\ell},\quad 0\le t< T,
\end{equation}
with some $c,w\ge 1$ that depend on $p$, $\ell$ and the coefficients of \eqref{eq1.1} only.
\end{subequations}
\end{corollary}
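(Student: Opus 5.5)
The plan is to bootstrap in integrability, starting from the $X^1_1$ bound of Lemma~\ref{lm5.2} and applying Lemma~\ref{lm5.3} a finite number of times. Each application moves from a space $X^{p_0}_{r+\theta_\beta\theta}$ to a space $X^{p_1}_r$ with $p_1>p_0$. The moment index drops by $\theta_\beta\theta$ at each step, and we pay the singular kernel $(t-\tau)^{\theta-\frac{d}{2\delta}(\frac{1}{p_0}-\frac{1}{p_1})-1}$. The target index is $r_\ast:=\bar{\ell}_1+\theta_\beta-\theta_\alpha$, which is at most $1-\theta_\alpha$ because $\theta_\beta<1-\bar\ell_1$ by \eqref{eq5.7a}. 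The lower constraint in \eqref{eq5.5c} at the final step reads $\bar\ell_1+(1-\theta)\theta_\beta<r_\ast$, i.e. $\theta>\theta_\alpha/\theta_\beta$; this is admissible since $\theta_\alpha<\theta_\beta$.

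First I would treat the term $\|e^{t\underline{\mathcal{T}}_{p_1,\ell}}u_0\|_{X^{p_1}_r}$. By \eqref{eq3.7b} it is bounded by $C\|u_0\|_{X^{p_1}_r}\le C\|u_0\|_{Y^p_\ell}$, since $r\le\ell$ and $p_1\in[1,p]$ allows interpolation between $X^1_\ell$ and $X^p_\ell$.

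Next, the simplest route is a single step with $p_0=1$ and $p_1=p$. This requires $\theta\in(\frac{d}{2\delta p'},1)$ and $r+\theta_\beta\theta\le 1$, so that the right-hand side of \eqref{eq5.5a} is controlled by $\|u(\tau)\|_{X^1_1}\le e^{\omega\tau}\|u_0\|_{X^1_1}$ from Lemma~\ref{lm5.2}. The integral $\int_0^t(t-\tau)^{\theta-\frac{d}{2\delta p'}-1}e^{\omega\tau}d\tau$ is finite and of exponential growth, which yields \eqref{eq5.7b}. With $r=r_\ast$, we need a $\theta$ satisfying $\theta>\frac{\theta_\alpha}{\theta_\beta}\vee\frac{d}{2\delta p'}$ and $\theta\le\frac{1-r_\ast}{\theta_\beta}$. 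Since $\theta_\alpha/\theta_\beta<1$ and $\frac{1-r_\ast}{\theta_\beta}$ may exceed $1$, the real constraint is $\frac{d}{2\delta p'}<\frac{1-\bar\ell_1-\theta_\beta+\theta_\alpha}{\theta_\beta}$ together with $\frac{\theta_\alpha}{\theta_\beta}<\frac{1-\bar\ell_1-\theta_\beta+\theta_\alpha}{\theta_\beta}$. The second inequality is just $\theta_\beta<1-\bar\ell_1$. The first is implied by $\theta_\beta(d+2\delta p')<2\delta p'(1-\bar\ell_1)$, which is exactly the first bound in \eqref{eq5.7a}, and the extra $+\theta_\alpha$ only helps. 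The conditions $\theta<1$ and $r_\ast\le\ell-\theta_\beta\theta$ hold trivially because $\ell>1$. The hypotheses of Lemma~\ref{lm5.2}, namely $\theta_\beta\le 1-\bar\ell_0$, are part of \eqref{eq5.7a}.

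The step I expect to be the main obstacle is checking that the parameter window is genuinely nonempty once the strict inequalities in \eqref{eq5.5b}--\eqref{eq5.5c} are respected. In particular, $r_\ast>\bar\ell_1+(1-\theta)\theta_\beta$ must hold strictly while simultaneously $r_\ast+\theta_\beta\theta\le 1$. If these clash at the borderline, I would lower $r$ slightly below $r_\ast$ only if $\|\cdot\|_{X^p_r}$ still dominates the target norm. Otherwise I would choose $\theta$ strictly between $\frac{\theta_\alpha}{\theta_\beta}\vee\frac{d}{2\delta p'}$ and $\frac{1-r_\ast}{\theta_\beta}\wedge 1$, which is possible by the strict inequalities in \eqref{eq5.7a}. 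A last small point is the norm on the right of \eqref{eq5.5a}: $X^{1}_{r_\ast+\theta_\beta\theta}$ with index at most $1$ is dominated by $X^1_1$, since $w_s\le 2w_1$ for $s\le 1$.
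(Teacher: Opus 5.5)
\textbf{Gap: the single step from $X^1$ to $X^p$ is not always admissible.} Your one application of Lemma~\ref{lm5.3} with $p_0=1$, $p_1=p$ requires, by \eqref{eq5.5b}, $\tfrac{d}{2\delta p'}<\theta<1$. In particular it requires $\tfrac{d}{2\delta p'}<1$, and nothing in the hypotheses guarantees this. Condition \eqref{eq5.7a} only gives $\tfrac{d}{2\delta p'}<\tfrac{1-\bar{\ell}_1-\theta_\beta}{\theta_\beta}$, whose right-hand side is large when $\theta_\beta$ is small.

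For instance, take $d=3$, $\delta=\rho=\tfrac12$, $p=7$, $\bar{\ell}_0=\bar{\ell}_1=0$, $\theta_\beta=0.2$, $\theta_\alpha=0.05$. These satisfy \eqref{eq2.5}, \eqref{eq2.8a}, \eqref{eq2.10} and \eqref{eq5.7a}, yet $\tfrac{d}{2\delta p'}=\tfrac{18}{7}$. Your remark that $\theta<1$ holds ``trivially because $\ell>1$'' is a non sequitur: $\ell>1$ only handles $r\le\ell-\theta_\beta\theta$. The window for $\theta$ proposed in your last paragraph is empty in this regime.

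This is not a removable technicality. Even with $s=0$, the smoothing estimate \eqref{eq3.7c} from $X^1$ to $X^p$ carries the factor $t^{-\frac{d}{2\delta p'}}$. That factor is not integrable at $t=0$ once $\tfrac{d}{2\delta p'}\ge 1$. So no single Duhamel step can carry the $X^1_1$ bound of Lemma~\ref{lm5.2} all the way to $X^p$.

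\textbf{Missing idea: iterate in small integrability steps, as the paper does.} Fix $\theta=\theta_\alpha/\theta_\beta\in(0,1)$. Raise integrability in small increments, $\tfrac{1}{p_{k+1}}=\tfrac1p\vee\bigl(\tfrac{1}{p_k}-\tfrac{2\delta}{d}\theta\varepsilon\bigr)$ with $0<\varepsilon<1$, so that every step satisfies \eqref{eq5.5b}. Each step lowers the moment index by $\theta_\beta\theta=\theta_\alpha$, starting from $r_1=1-\theta_\alpha$. Each step also feeds the exponential bound already obtained for $\|u\|_{X^{p_k}_{r_k}}$ into the integrable kernel of \eqref{eq5.5a}. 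The free term is controlled by $\|u_0\|_{Y^p_\ell}$ through interpolation, as you observed.

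One needs $n+1>\tfrac{d}{2\delta p'\theta}$ steps to reach $p$. The lower bound in \eqref{eq5.5c} forces $n<\tfrac{1-\bar{\ell}_1-\theta_\beta}{\theta_\alpha}$. The first inequality in \eqref{eq5.7a} is exactly what makes these two requirements compatible for some integer $n\ge 0$, cf.\ \eqref{eq5.9}.

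The total moment spent, about $\tfrac{d\theta_\beta}{2\delta p'}$, is the same as in your single step. The iteration only splits it into pieces whose time singularities are integrable. The final index satisfies $r_{n+1}>\bar{\ell}_1+\theta_\beta-\theta_\alpha$, which gives \eqref{eq5.7b}. Your argument is valid as written only when $\tfrac{d}{2\delta p'}<1$, where it is the $n=0$ case of this scheme.
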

\begin{proof}
(a) To begin we let 
\[
p_0 :=1,\quad r_0 :=1,\quad v_0(t) := \bigl\|u(t)\bigr\|_{X^{p_0}_{r_0}},\quad 0\le t<T.
\]
By Lemma~\ref{lm5.2}, $v(t) \le e^{\omega t} \|u_0\|_{X^1_1}$. 
We fix some $0<\varepsilon<1$ 
and assume for a moment that $0<\theta< 1$ can be chosen so that 
\[
\bar{\ell}_1 + (1-\theta)\theta_\beta < r_0 - \theta_\beta\theta 
\le (1-\theta_\alpha)\wedge(\ell-\theta_\beta\theta). 
\]
Then letting,
\begin{align*}
v_1(t) := \bigl\|u(t)\bigr\|_{X^{p_0}_{r_0}},\quad r_1 :=  r_0 - \theta_\beta\theta,\quad 
\tfrac{1}{p_1} := \tfrac{1}{p} \vee \bigl(\tfrac{1}{p_0} - \tfrac{2\delta}{d}\theta\varepsilon \bigr)
> \tfrac{1}{p_0} - \tfrac{2\delta}{d}\theta.
\end{align*}
and using Lemma~\ref{lm5.3}, we arrive at
\begin{align*}
v_1(t) &\le c_{r_1}\|e^{t\underline{\mathcal{T}}_{p_1,\ell}}u_0\|_{X^{p_1}_{r_1}}
+ c_{r_1}\int_0^t (t-\tau)^{\theta-1 - \frac{d}{2\delta}(\frac{1}{p_0}-\frac{1}{p_1})} 
v_0(\tau) d\tau\\
&\le c_{r_1}\|u_0\|_{X^{p_1}_{r_1}} +  c_{r_1}\int_0^t 
\tau^{\theta-1 - \frac{d}{2\delta}(\frac{1}{p_0}-\frac{1}{p_1})}  d\tau
e^{\omega t} \|u_0\|_{X^{p_0}_{r_0}} \\
&\le c_1 e^{\omega_1 t} \|u_0\|_{X^{p}_{\ell}},\quad 0\le t<T,
\end{align*}
with some $c_1,\omega_1>0$. 

(b) We may repeat step (a) iteratively to obtain
\begin{subequations}\label{eq5.8}
\begin{align}
\label{eq5.8a}
&r_{k+1} :=  r_k - \theta_\beta\theta,\quad 
\tfrac{1}{p_{k+1}} := \tfrac{1}{p} \vee \bigl(\tfrac{1}{p_k} - \tfrac{2\delta}{d}\theta\varepsilon \bigr)
> \tfrac{1}{p_k} - \tfrac{2\delta}{d}\theta,\quad 1\le k\le n,\\
\label{eq5.8b}
&v_{k+1}(t) := \bigl\|u(t)\bigr\|_{X^{p_{k+1}}_{r_{k+1}}} \le c_{k+1} e^{\omega_{k+1} t} \|u_0\|_{X^{p}_{\ell}},
\quad 0\le t<T.
\end{align}
The calculations make sense as long as 
\begin{equation}\label{eq5.8c}
\bar{\ell}_1 + (1-\theta)\theta_\beta < r_{n+1} < r_1\le (1-\theta_\alpha)\wedge(\ell-\theta_\beta\theta). 
\end{equation}
\end{subequations}
If $n\ge 0$ is the smallest integer that satisfies $p_{n+1}= p$, then \eqref{eq5.8} holds 
for some $0<\varepsilon<1$, provided
\begin{equation}\label{eq5.9}
\tfrac{d}{2\delta p'\theta} - 1 < n < \tfrac{1-\bar{\ell}_1-\theta_\beta}{\theta_\beta \theta},
\quad \theta_\alpha\le \theta\theta_\beta.
\end{equation}
We let $\theta := \tfrac{\theta_\alpha}{\theta_{\beta}}$, then in view of the first inequality 
in \eqref{eq5.7a}, $0<\theta<1$. We combine this bound with the second inequality 
of \eqref{eq5.7a} to conclude that indeed, \eqref{eq5.9} holds for some non-negative 
integer $n\ge 0$. The proof is complete. 
\end{proof}

Now we are in the position to prove Theorem~\ref{lm2.3}.
\begin{proof}[Proof of Theorem~\ref{lm2.3}]
By the standard continuation alternative, it suffices to show that 
$\sup_{0\le t<T} \bigl\|u(t)\bigr\|_{Y^p_\ell} < \infty$, for any given $0< T<\infty$. 

(a)  In our settings, 
$0<\rho\theta_\beta \le \bar{\ell}_1+\theta_\beta - \theta_\alpha$. Therefore, 
we may use \eqref{eq4.9}, \eqref{eq2.7}, 
Lemmas~\ref{lm4.2}-\ref{lm4.4}, \eqref{eq2.9b} and \eqref{eq5.7b}, to obtain
\begin{align*}
\bigl\|u(t)\bigr\|_{X^p_{\ell+j\rho\theta_\beta}} 
&\le c_1 t^{-j\rho}\|u_0\|_{Y^p_\ell}+ c_2' \int_0^t (t-\tau)^{-j\rho - \frac{d}{2p\delta}}
\bigl[\bigl\|u(\tau)\bigr\|_{X^p_{\ell+\rho\theta_\beta}} \bigl\|u(\tau)\bigr\|_{X^p_0}\\
&\qquad\qquad\qquad\qquad\qquad\qquad\qquad\qquad
+\bigl\|u(\tau)\bigr\|_{X^p_{\ell}} \bigl\|u(\tau)\bigr\|_{X^p_{\rho\theta_\beta}}\bigr]d\tau\\
&\le c_1 t^{-j\rho}\|u_0\|_{Y^p_\ell} + c_2\|u_0\|_{Y^p_\ell}
\int_0^t (t-\tau)^{-j\rho - \frac{d}{2p\delta}} \bigl\|u(\tau)\bigr\|_{X^p_{\ell+\rho\theta_\beta}} d\tau,
\end{align*}
for $j=0,1$, some $c_1,c_2\ge1$ and any $0<t\le T$. 
We apply Lemma~\ref{lm4.6} to the last inequality with $j=1$ and then substitute the result back 
into the same inequality but with $j=0$. This gives
\begin{equation}\label{eq5.10}
t^{j\rho}\bigl\|u(t)\bigr\|_{X^p_{\ell+j\rho\theta_\beta}} \le C_1 \|u_0\|_{Y^p_\ell},
\quad j=0,1,\quad 0<t\le T,
\end{equation} 
with some $1\le C_1<\infty$ that depends on $\|u_0\|_{Y^p_\ell}$, $T>0$ and coefficients 
of the model \eqref{eq1.1} only.

(b) The same arguments as above, give
\begin{align*}
\bigl\|u(t)\bigr\|_{X^1_{\ell+j\rho\theta_\beta}} 
&\le c_3' t^{-j\rho}\|u_0\|_{Y^1_\ell}+ c_4' \int_0^t (t-\tau)^{-j\rho}
\bigl[\bigl\|u(\tau)\bigr\|_{X^2_{\ell+\rho\theta_\beta}} \bigl\|u(\tau)\bigr\|_{X^2_0}\\
&\qquad\qquad\qquad\qquad\qquad\qquad\qquad
+\bigl\|u(\tau)\bigr\|_{X^2_{\ell}} \bigl\|u(\tau)\bigr\|_{X^2_{\rho\theta_\beta}}\bigr]d\tau.
\end{align*}
for $j=0,1$, some $c_3',c_4'\ge1$ and any $0<t\le T$. 
As in the proof of Theorem~\ref{lm2.2}(i), we observe that
\[
\|u\|_{X^2_{r}} \le 
\|u\|_{X^1_{r}}^{1-\frac{p'}{2}}\|u\|_{X^p_{r}}^{\frac{p'}{2}},
\quad 2\le p<\infty,\quad r\ge 0.
\] 
Hence, using Lemma~\ref{lm5.2}, \eqref{eq5.10} and Young's inequality, we infer
\begin{align*}
\bigl\|u(\tau)\bigr\|_{X^2_{\ell+\rho\theta_\beta}} \bigl\|u(\tau)\bigr\|_{X^2_0}
&\le 
\bigl\|u(\tau)\bigr\|_{X^1_{\ell+\rho\theta_\beta}}^{1-\frac{p'}{2}}
\bigl\|u(\tau)\bigr\|_{X^p_{\ell+\rho\theta_\beta}}^{\frac{p'}{2}}
\bigl\|u(\tau)\bigr\|_{X^1_0}^{1-\frac{p'}{2}}
\bigl\|u(\tau)\bigr\|_{X^p_0}^{\frac{p'}{2}}\\
&\le \tau^{-\frac{p'}{2}\rho} \bigl\|u(\tau)\bigr\|_{X^1_{\ell+\rho\theta_\beta}}^{1-\frac{p'}{2}} 
\bigl(C_1 \|u_0\|_{Y^p_\ell}\bigr)^{1+\frac{p'}{2}}\\
&\le  \bigl(1-\tfrac{p'}{2}\bigr)\bigl\|u(\tau)\bigr\|_{X^1_{\ell+\rho\theta_\beta}}
+\tfrac{p'}{2} \tau^{-\rho} \bigl(C_1 \|u_0\|_{Y^p_\ell}\bigr)^{1+\frac{2}{p'}}.
\end{align*}
Since similar calculations give also
\[
\bigl\|u(\tau)\bigr\|_{X^2_\ell} \bigl\|u(\tau)\bigr\|_{X^2_{\rho\theta_\beta}}
\le  \bigl(1-\tfrac{p'}{2}\bigr)\bigl\|u(\tau)\bigr\|_{X^1_{\ell+\rho\theta_\beta}}
+\tfrac{p'}{2} \bigl(C_1 \|u_0\|_{Y^p_\ell}\bigr)^{1+\frac{2}{p'}},
\]
after some elementary manipulations, we arrive at
\begin{align*}
\bigl\|u(t)\bigr\|_{X^1_{\ell+j\rho\theta_\beta}} 
&\le c_3 t^{-j\rho}\|u_0\|_{Y^p_\ell} + c_4 \|u_0\|_{Y^p_\ell}^{1+\frac{2}{p'}}
+ c_5\int_0^t (t-\tau)^{-j\rho} \bigl\|u(\tau)\bigr\|_{X^1_{\ell+\rho\theta_\beta}} d\tau,
\end{align*}
for $j=0,1$, some $c_3,c_4,c_5\ge1$ and $0<t\le T$. 
As in part (a) of the proof, we apply Lemma~\ref{lm4.6} to obtain
\begin{equation}\label{eq5.11}
t^{j\rho}\bigl\|u(t)\bigr\|_{X^1_{\ell+j\rho\theta_\beta}} \le C_2 \bigl(\|u_0\|_{Y^p_\ell}
+\|u_0\|_{Y^p_\ell}^{1+\frac{2}{p'}}\bigr),\quad j=0,1,\quad 0<t\le T,
\end{equation} 
where $1\le C_2< \infty$ depends on $\|u_0\|_{Y^p_\ell}$, $T>0$ and coefficients 
of the model \eqref{eq1.1} only. Bounds \eqref{eq5.10}-\eqref{eq5.11} guarantee that
$\bigl\|u(t)\bigr\|_{Y^p_\ell}$ cannot blow up in any given bounded time interval $[0,T)$. 
Hence, $u(t)$ is defined globally and the proof is complete.
\end{proof}

\section{Conclusion}\label{sec6}

In the paper, we employed the theory of positive $C_0$-semigroups and semi-linear equations
to study local and global classical solvability of the continuous diffusion-fragmenta\-tion-coagulation 
models with vanishing diffusion and unbounded fragmentation and coagulation rates. 
In our functional settings, the well developed theory of UMD-valued linear parabolic problems 
\cite{DenHiePru2003, KunWei2004} as well as vector-valued generation results of \cite{Am1997} 
do not apply. In Section~\ref{sec3}, we adopted an alternative semigroup generation technique 
that uses a combination of duality, sharp Green's functions estimates available for second-order elliptic 
operators in the divergence form and perturbation results. 
The vector-valued $C_0$-semigroup, obtained this way, behaves as a fractional scalar diffusion 
in space and has moment-regularizing property in particles sizes. These facts were employed 
subsequently in Section~\ref{sec4} to obtain local classical solutions to the complete nonlinear 
model \eqref{eq1.1}. Existence of global in time positive classical solutions requires additional 
structural restrictions on the model coefficients. In Section~\ref{sec5}, we were able to construct 
such solutions in the special case of power diffusion, fragmentation and coagulation rates. 
To the best of our knowledge this is the first result of this type in the literature. 

To conclude this paper, we note that all arguments and results of Sections~\ref{sec3} and \ref{sec4} 
extend verbatim to problems with vanishing diffusion on smooth domains $\Omega\subset\mathbb{R}^d$, 
subject to the homogeneous Neumann boundary condition. However, extension of our global results 
is not immediate. Calculations of Section~\ref{sec5} rely on the size-monotonicity property \eqref{eq5.4} 
that is neither obvious nor immediately available in general domains.

\bibliographystyle{plain}
\bibliography{b2026}
\end{document}